\documentclass[a4paper,12pt]{article}

\title{\Large Global null controllability  of stochastic semilinear complex Ginzburg-Landau equations}
\author{\sc\normalsize Sen Zhang$^\dagger$, Hang Gao$^\S$, and  Ganghua Yuan$^\ddagger$}
\date{}

\usepackage{amsmath}
\usepackage{amsthm}
\usepackage{amsfonts,amssymb}
\usepackage{geometry}
\usepackage{mathrsfs}
\geometry{left=2.3cm,right=2.3cm,top=2.0cm,bottom=4.0cm}
\usepackage{indentfirst}
\usepackage{titlesec}
\usepackage{color}

\newtheorem{theorem}{Theorem}[section]

\newtheorem{lemma}{Lemma}[section]

\newtheorem{remark}{Remark}[section]

\numberwithin{equation}{section}

\newcommand{\dd}[0]{\mathrm{d}}
\newcommand{\EE}[0]{\mathbb{E}}
\newcommand{\dxt}[0]{\dd x\dd t}

\newcommand{\dt}[0]{\dd t}
\newcommand{\dx}[0]{\dd x}
\newcommand{\intt}[0]{\int_0^T}

\newcommand{\rmm}[1]{{\rm #1}}
\newcommand{\barr}[1]{{\overline {#1}}}

\titleformat*{\section}{\normalsize\bfseries\rmfamily}
\titleformat*{\subsection}{\normalsize\bfseries\rmfamily}

\begin{document}
\maketitle
\begin{abstract}
In this paper, we study the null controllability of forward and backward stochastic semilinear complex Ginzburg-Landau equations with global Lipschitz nonlinear terms.
For this purpose, by deriving an improved global Carleman estimates for linear systems,
we obtain the controllability results for the stochastic linear systems with a $L^2$-valued source term.
Based on it, together with a Banach fixed point argument, the desired null controllability of semilinear systems is derived.
\end{abstract}
\footnote{{
\it Key  Words: null controllability, stochastic semilinear Ginzburg-Landau equations, Carleman estimate}

\quad {\it MSC: 35Q56, 93B05, 60H15}

\quad $^\ast$The research of HG was supported in part by National Key R\&D Program of China (No. 2023YFA1009002).
The research of GY was supported in part by NSFC (No. 11771074 and No.12371421) and National Key R\&D Program of China (No. 2021YFA1003400 and No. 2020YFA0714102).

\quad $^\dagger$School of Mathematics and Statistics, Beijing Institute of Technology, Beijing 100081, P.R. China (7520230216@bit.edu.cn).

\quad $^\S$KLAS, School of Mathematics \& Statistics,  Northeast Normal University, Changchun Jilin 130024, P.R. China (hangg@nenu.edu.cn)

\quad $^\ddagger$KLAS, School of Mathematics \& Statistics,  Northeast Normal University, Changchun Jilin 130024, P.R. China (yuangh925@nenu.edu.cn).
}

\section{Introduction and Main Results}
Ginzburg-Landau equation is an important nonlinear mathematical physics equation, which
was proposed initially by Ginzburg and Landau in \cite{Ginzburg1950On} to serve as a phenomenological description of superconductivity.
Now, it is used to describe many other physical phenomena, such as superfluidity, Bose-Einstein condensation, liquid crystals and strings in field theory, nonlinear waves, and second-order phase transition (see e.g., \cite{Aranson2002The, Hohenberg2015An}).
For more physical background and applications, we refer to \cite{Aranson2002The,Garcia-Morales2012The,Milosevic2010The} and the references therein.
In practice, systems are usually affected by uncertainty. When considering the random influence, stochastic complex Ginzburg-Landau equations arise naturally.
In many situations, stochastic
partial differential equations (SPDEs) are more realistic than deterministic ones.
Recently, stochastic complex Ginzburg-Landau equations have been extensively studied (see e.g.,  \cite{Balanzario2020Regularity,Cheng2023Averaging,Fu2017Controllability}).
In this paper, we will study the controllability of stochastic semilinear complex Ginzburg-Landau equations.

Let $G$ be a nonempty bounded domain in $\mathbb{R}^n$ ($n$ is a positive integer) with a boundary $\Gamma$ of class $C^4$ and $T>0$. Put $Q:=(0,T)\times G$ and $\Sigma:=(0,T)\times \Gamma$. Assume $G_{0}$ to be a given nonempty open subset of $G$ and denote by $\chi_{G_{0}}$ the characteristic function of the set $G_{0}$.
For simplicity, we will use the notation $y_{j}:=\frac{\partial y}{\partial x_{j}}$, where $x_{j}$ is the $j$th coordinate of a generic point $x=(x_{1},x_{2},\dots,x_{n})$ in $\mathbb{R}^n$. Similarly, we use the notation $z_{j}$, $v_{j}$, etc. for the partial derivatives of $z$, $v$ with respect to $x_{j}$.
Moreover, for any complex number $c$, we denote by $\barr{c}$, $\rmm{Re}c$, and $\rmm{Im}c$ its complex conjugate, real part, and imaginary part, respectively.
Also, we use $C$ to denote a generic positive constant depending only on $G_{0}$ and $G$,  which may change from line to line.

Let $(\Omega,\mathcal{F},\mathbf{F},\mathbb{P})$ with $\mathbf{F}=\{\mathcal{F}_t\}_{t\geq0}$ be a complete filtered probability space on which a one-dimensional standard Brownian motion $\{B(t)\}_{t\geq0}$ is defined and $\mathbf{F}$ is the natural filtration generated by $B(\cdot)$, augmented by all the $\mathbb{P}$ null sets in $\mathcal{F}$. Let $H$ be a Banach space.
Denote by $L^2_{\mathcal{F}_t}(\Omega;H)$ the space of all $\mathcal{F}_t$-measurable random variables $\zeta$ such that $\EE |\zeta|^2_{H}<\infty$; denote by $L^2_\mathbb{F}(0,T;H)$ the space consisting of all $H$-valued $\mathbf{F}$-adapted processes $X(\cdot)$ such that $\mathbb{E}(|X(\cdot)|^2_{L^2(0,T;H)})<\infty$; by $L^\infty_\mathbb{F}(0,T;H)$ the space consisting of all $H$-valued $\mathbf{F}$-adapted bounded processes $X(\cdot)$; and by $L^2_\mathbb{F}(\Omega;C([0,T];H))$ the space consisting of all $H$-valued $\mathbf{F}$-adapted continuous processes $X(\cdot)$ such that $\mathbb{E}(|X(\cdot)|^2_{C(0,T;H)})<\infty$. Similarly, one can define $L^2_\mathbb{F}(\Omega;C^k([0,T];H))$ for any positive integer $k$. All of these spaces are Banach spaces with canonical norms.

Consider the following controlled forward  stochastic complex  Ginzburg-Landau equation:
\begin{equation}\label{equationy1}
	\left\{
		\begin{aligned}
	&\dd y-(a+\rmm{i} b)\sum_{j,k=1}^n (a^{jk}y_j)_k\dt
	\\
	&\quad\quad\quad\quad\quad =[f(\omega,t,x,y)+\chi_{G_0}h]\dt
	+[g(\omega,t,x,y)+H]\dd B(t) &\textup{in}\ &Q,\\
    &y=0 &\textup{on}\ &\Sigma,\\
    &y(0)=y_0 &\textup{in}\ &G,
            \end{aligned}
    \right.
\end{equation}
and controlled backward stochastic complex Ginzburg-Landau equation:
\begin{equation}\label{equationyb1}
	\left\{
		\begin{aligned}
	&\dd y+(a-\rmm{i} b)\sum_{j,k=1}^n (a^{jk}y_j)_k\dt =[\Upsilon(\omega,t,x,y,Y)+\chi_{G_0}h]\dt
	+Y\dd B(t) &\textup{in}\ &Q,\\
    &y=0 &\textup{on}\ &\Sigma,\\
    &y(T)=y_T &\textup{in}\ &G,
            \end{aligned}
    \right.
\end{equation}
with $a>0$, $b\in\mathbb{R}$.
In (\ref{equationy1}) (resp., (\ref{equationyb1})), $y_{0}$ ($y_{T}$) is a given initial value (resp., terminal value). In both cases, $y$ is the state variable. In (\ref{equationy1}), the control variable consists of the pair $(h,H)$; while in (\ref{equationyb1}), the control variable is only $h$.
Unless otherwise stated, we assume that all functions mentioned
in this paper are complex-valued.
We also assume that the functions $a^{jk}$, $f$, $g$ and $\Upsilon$ satisfy the following assumptions:
\begin{equation}\nonumber
\begin{split}
{\bf (H_{1}).}\ & a^{jk}:\Omega\times [0,T]\times\barr{G}\rightarrow \mathbb{R},\ a^{jk}\in L^2_\mathbb{F}(\Omega;C^1([0,T];W^{2,\infty}(G;\mathbb{R}))),\ j,k=1,2,\dots,n.
\\
{\bf (H_{2}).}\ & \mbox{For any }\epsilon>0,\mbox{ there is a }\rho>0\mbox{ so that }|a^{jk}(\omega,t,x_{1})-a^{jk}(\omega,t,x_{2})|\leq\epsilon
\\
&\mbox{almost surely for any }t\in[0,T]\mbox{ and } x_{1},x_{2}\in \barr{G}\mbox{ satisfying that }|x_{1}-x_{2}|\leq \rho.
\\
{\bf (H_{3}).}\ & a^{jk}=a^{kj}\  \mbox{and there is some constant}\ s_0>0\ \mbox{such that}
\\ &\sum_{j,k=1}^n a^{jk}(\omega,t,x)\xi^{j}\barr{\xi^k}\geq s_0 |\xi|^2\ \mbox{for any}\ (\omega,t,x,\xi )=(\omega,t,x, \xi^{1},\dots,\xi^{n})\in \Omega\times Q\times \mathbb{C}^{n}.
\\
\end{split}
\end{equation}
\begin{equation}\nonumber
\begin{split}
{\bf (H_{4}).}\ &f(\cdot,\cdot,\cdot,y)\ \mbox{and}\ g(\cdot,\cdot,\cdot,y)\  \mbox{are}\ \mathbf{F}\mbox{-adapted},\
L^2\mbox{-valued stochastic processes}
\\
&\mbox{for each}\ y\in L^2(G;\mathbb{C}).
\\
{\bf (H_{5}).}\ &\forall\ (\omega,t,x)\in \Omega\times Q,\ f(\omega,t,x,0)=0.
\\
{\bf (H_{6}).}\ &\exists \ \kappa>0,\ \forall\ (\omega,t,x,a_{1},a_{2})\in \Omega\times Q\times \mathbb{R}^2,\
|f(\omega,t,x,a_{1})-f(\omega,t,x,a_{2})|\leq \kappa |a_{1}-a_{2}|.
\\
{\bf (H_{7}).}\ &\exists \ \kappa_{1}>0,\ \forall\ (\omega,t,x,a_{1},a_{2})\in \Omega\times Q\times \mathbb{R}^2,\ |g(\omega,t,x,a_{1})-g(\omega,t,x,a_{2})|\leq \kappa_{1} |a_{1}-a_{2}|.
\\
{\bf (H_{8}).}\ &\Upsilon(\cdot,\cdot,\cdot,y,Y)\mbox{ are}\ \mathbf{F}\mbox{-adapted},\
L^2\mbox{-valued stochastic processes for each }y,Y\in L^2(G).
\\
{\bf (H_{9}).}\ &\forall\ (\omega,t,x)\in \Omega\times Q,\ \Upsilon(\omega,t,x,0,0)=0.
\\
{\bf (H_{10}).}\ &\exists \ \kappa_{2}>0,\ \forall\ (\omega,t,x,a_{1},a_{2},b_{1},b_{2})\in \Omega\times Q\times \mathbb{R}^4,
\\
&
|\Upsilon(\omega,t,x,a_{1},b_{1})-\Upsilon(\omega,t,x,a_{2},b_{2})|\leq \kappa_{2} (|a_{1}-a_{2}|+|b_{1}-b_{2}|).
\end{split}
\end{equation}
\begin{remark}
Under the assumptions ($H_{1}$)-($H_{7}$), by taking $y_{0}\in L^2_{\mathcal{F}_{0}}(\Omega;L^{2}(G;\mathbb{C}))$ and $(h,H)\in L^2_{\mathbb{F}}(0,T;L^2(G_{0};\mathbb{C}))\times L^2_{\mathbb{F}}(0,T;L^2(G;\mathbb{C}))$, it is known (see e.g. \cite[Theorem 3.24.]{Lu2021Mathematical}) that system (\ref{equationy1}) admits a unique solution
\begin{equation}\nonumber
y\in \mathcal{H}_{T}	
:=
L^2_{\mathbb{F}}(\Omega; C([0,T];L^2(G;\mathbb{C}))) \cap L^2_{\mathbb{F}}(0,T;H_0^1(G;\mathbb{C})).
\end{equation}
Under the assumptions ($H_{1}$)-($H_{3}$) and ($H_{8}$)-($H_{10}$), by taking $y_{T}\in L^2_{\mathcal{F}_{T}}(\Omega;L^{2}(G;\mathbb{C}))$ and $h\in L^2_{\mathbb{F}}(0,T;L^2(G_{0};\mathbb{C}))$, it is known (see e.g. \cite[Theorem 4.11.]{Lu2021Mathematical}) that system (\ref{equationyb1}) admits a unique solution
\begin{equation}\nonumber
y\in \mathcal{H}_{T}\times L^2_{\mathbb{F}}(0,T;L^2(G;\mathbb{C})).
\end{equation}
\end{remark}
The main purpose of this paper is to study the null controllability of the  semilinear stochastic complex Ginzburg-Landau equations (\ref{equationy1}) and
(\ref{equationyb1}), respectively.
System (\ref{equationy1}) (resp., (\ref{equationyb1})) is said to be {\it globally null controllable} if for any $y_{0}\in L^2_{\mathcal{F}_{0}}(\Omega;L^2(G;\mathbb{C}))$ (resp., $y_{T}\in L^2_{\mathcal{F}_{T}}(\Omega;L^2(G;\mathbb{C}))$) , there exists a control $(h,H)\in L^2_{\mathbb{F}}(0,T;L^2(G_{0};\mathbb{C}))\times L^2_{\mathbb{F}}(0,T;L^2(G;\mathbb{C}))$ (resp., $h\in L^2_{\mathbb{F}}(0,T;L^2(G_{0};\mathbb{C}))$) such that the corresponding solution $y$ of (\ref{equationy1}) (resp., (\ref{equationyb1}))   satisfies $y(T,\cdot)=0$ in $G$, $\mathbb{P}$-a.s. (resp., $y(0,\cdot)=0$ in $G$, $\mathbb{P}$-a.s.)

The controllability of deterministic nonlinear partial differential equations (PDEs) has been studied by many authors and the results are relatively rich, such as
nonlinear parabolic equation (see e.g.,  \cite{Barbu2000Exact,Doubova2002On,Emanuilov1995Controllability,Fabre1995Approximate,Fernandez-Cara1995Null,Fernandez-Cara2000Null,Fursikov1996Controllability,LeBalch2020Global}),
nonlinear fourth order  parabolic equation (see e.g., \cite{Kassab2020Null,Zhou2012Observability}),
Ginzburg-Landau equation (see e.g., \cite{Fu2009Null,Rosier2009Null}).
It can be seen from these results that the authors usually use the following strategies to study controllability problems.
First, linearize the nonlinear system and study the controllability of the linearized system.
Then the controllability problem of nonlinear systems is solved by using appropriate fixed point methods, usually Schauder or Kakutani fixed point methods.
At this point, the property of compactness plays a crucial role, for example, the compact embedding result of the Aubin-Lions lemma is used commonly.

Compared with deterministic PDEs, although the controllability problems of SPDEs have not been widely studied, some progress has been made in recent years.
We refer to \cite{Barbu2003Carleman,Fu2017A,Fu2017Controllability,Gao2015Observability,Liao2024Exact,Lu2011Some,Lu2013Exact,Lu2022Null,Tang2009Null,Yu2022Carleman} for some known results in this respect.
Reference \cite{Barbu2003Carleman,Lu2011Some,Tang2009Null} showed the null controllability and approximate controllability for the stochastic parabolic equations.
In \cite{Fu2017A,Fu2017Controllability}, the null controllability for the stochastic complex  Ginzburg-Landau equation was obtained.
In \cite{Lu2013Exact}, the exact controllability for stochastic Schr\"odinger equation was established.
References \cite{Liao2024Exact} and \cite{Yu2022Carleman} showed the exact controllability for a refined stochastic wave equation and a refined stochastic beam equation, respectively.
In \cite{Gao2015Observability} and \cite{Lu2022Null}, the null controllability for the stochastic fourth order parabolic type equations were discussed.

It is worth mentioning that the above works mainly focus on the controllability of linear SPDEs systems, while there is little literature on the controllability of nonlinear stochastic systems.
This is due to the lack of compactness embedding for the function spaces related to SPDEs (see \cite[Remark 2.5]{Tang2009Null} or \cite{Lu2021Mathematical}), which makes some classical strategies in deterministic setting (see e.g., \cite{Fernandez-Cara2000Null}) fail to establish null controllability for semilinear systems at the stochastic level.
As far as we know, the known results in this direction seem to be only \cite{Hernandez-Santamaria2022Statistical,Hernandez-Santamaria2023Global,Zhang2024New}, in which the author established the null controllability of semilinear stochastic parabolic equations.
However, to our best knowledge,  nothing is known about the controllability of nonlinear stochastic complex Ginzburg-Landau equations.
In this paper, we study the null controllability problems for stochastic semilinear complex Ginzburg-Landau equations (\ref{equationy1}) and (\ref{equationyb1}).

The main results in this paper read as follows.
\begin{theorem}\label{control}
Let assumptions $(H_{1})$-$(H_{6})$ be satisfied. Then system $(\ref{equationy1})$ is globally null controllable.
\end{theorem}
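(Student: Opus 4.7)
The overall strategy I would follow is the one suggested by the abstract: reduce Theorem \ref{control} to the linear controllability result with an $L^2$-valued source term (the paper's main technical ingredient), and then solve the semilinear problem by a Banach fixed-point argument in a suitable Carleman-weighted space. As stressed in the introduction, the natural function spaces for SPDEs do not embed compactly, so the usual Schauder/Kakutani route used in the deterministic literature fails; the fixed-point map must be forced to be a strict contraction, not merely continuous.

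First, I would recast the semilinearities as linear source terms. For $z\in L^2_\mathbb{F}(0,T;L^2(G;\mathbb{C}))$, set $F_1(z):=f(\omega,t,x,z)$ and $F_2(z):=g(\omega,t,x,z)$. By $(H_5)$--$(H_6)$ and the global Lipschitz assumption on $g$ (which is in force via $(H_7)$, needed anyway for the well-posedness recalled after the statement of (\ref{equationy1})), one has the pointwise bounds $|F_1(z)|\leq \kappa|z|$ and $|F_2(z)|\leq \kappa_1 |z|+|g(\cdot,0)|$, together with the Lipschitz-in-$z$ estimates $|F_1(z_1)-F_1(z_2)|\leq \kappa|z_1-z_2|$ and $|F_2(z_1)-F_2(z_2)|\leq \kappa_1|z_1-z_2|$. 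Invoking the paper's linear controllability theorem, for each frozen source $(F_1(z),F_2(z))$ one obtains controls $(h_z,H_z)$ such that the solution $y=:\Lambda(z)$ of
\[
\begin{cases}
\dd y-(a+\rmm{i}b)\sum_{j,k=1}^n(a^{jk}y_j)_k\,\dt=[F_1(z)+\chi_{G_0}h_z]\,\dt+[F_2(z)+H_z]\,\dd B(t) & \text{in }Q,\\
y=0 & \text{on }\Sigma,\\
y(0)=y_0,\quad y(T,\cdot)=0 & \text{in }G,
\end{cases}
\]
satisfies a Carleman-weighted estimate
\[
\|y\|_{X_w}^2+\|(h_z,H_z)\|_{U_w}^2\leq C\bigl(\|y_0\|_{L^2_{\mathcal{F}_0}(\Omega;L^2(G))}^2+\|(F_1(z),F_2(z))\|_{Y_w}^2\bigr),
\]
where $X_w,U_w,Y_w$ are spaces carrying the Carleman weight $e^{-s\varphi}$, with the crucial feature (inherited from the \emph{improved} Carleman estimate) that the embedding $Y_w\hookrightarrow X_w$ has operator norm $o(1)$ as $s\to+\infty$.

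For the fixed-point step, $\Lambda$ is viewed as a self-map of $X_w$. The difference $\Lambda(z_1)-\Lambda(z_2)$ solves the same linear equation with zero initial datum and source $(F_1(z_1)-F_1(z_2),F_2(z_1)-F_2(z_2))$, pointwise bounded by $(\kappa+\kappa_1)|z_1-z_2|$. Applying the linear estimate to this difference and exploiting the small-norm embedding $Y_w\hookrightarrow X_w$ gives
\[
\|\Lambda(z_1)-\Lambda(z_2)\|_{X_w}\leq \eta(s,\kappa,\kappa_1)\,\|z_1-z_2\|_{X_w},\qquad \eta(s,\kappa,\kappa_1)\xrightarrow[s\to+\infty]{}0.
\]
Choosing $s$ large enough that $\eta<1$, Banach's contraction principle yields a fixed point $z=\Lambda(z)=y$; by construction, the controls $(h_z,H_z)$ drive $y$ to $0$ at time $T$ $\mathbb{P}$-a.s., which is exactly the null controllability of (\ref{equationy1}).

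The genuine obstacle—and the technical heart of the paper—is not the fixed-point step but the construction of Step~1: the linear controllability estimate must accommodate an $L^2$-valued source with a constant that can be beaten by $\kappa,\kappa_1$ once $s$ is enlarged. This is precisely why the paper advertises an \emph{improved} global Carleman estimate: the standard deterministic device of absorbing source terms through $H^{-1}$-duality and integration by parts in time is obstructed by the Itô correction, so the weighted inequality has to be calibrated to absorb the source in the desired norm without sacrificing a full power of the Carleman parameter. Once that linear estimate is available, the Lipschitz hypotheses $(H_6)$ (and $(H_7)$) make the passage to the semilinear equation essentially mechanical.
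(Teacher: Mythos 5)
Your overall strategy coincides with the paper's: freeze the nonlinearity as a source, invoke the weighted linear controllability result built on the improved Carleman estimate (whose key feature is exactly the gain of a factor $\lambda^{-3}\mu^{-4}\xi^{-3}$ between the norm of the source and the norm of the controlled state, with a weight that does not degenerate at $t=0$), and close with a Banach fixed point made contractive by enlarging the Carleman parameters. The paper phrases the fixed point on the space of sources ($\mathscr{E}:F\mapsto f(\cdot,y_F)$ on $\mathscr{S}_{\lambda,\mu}$) rather than on trajectories as you do, but this is a cosmetic difference: the contraction constant is $C\kappa^2\lambda^{-3}\mu^{-4}$ in both formulations.

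There is, however, one genuine gap in your argument as written: the treatment of the diffusion nonlinearity $g$. Theorem \ref{control} assumes only $(H_1)$--$(H_6)$; the Lipschitz condition $(H_7)$ on $g$ is deliberately \emph{not} among the hypotheses. You incorporate $F_2(z)=g(\cdot,z)$ into the frozen source for the diffusion term and run the contraction over it, which forces you to invoke $(H_7)$ both for the Lipschitz estimate $|F_2(z_1)-F_2(z_2)|\le\kappa_1|z_1-z_2|$ and (as you note) for well-posedness. Under the stated hypotheses $g$ need not be Lipschitz, so your contraction estimate for the $g$-component fails and your proof only establishes the theorem under the strictly stronger assumption set $(H_1)$--$(H_7)$. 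The paper avoids this entirely via Remark \ref{remark1}: since the control $H$ acts on the whole domain $G$, one first solves the null controllability problem for the reduced system (\ref{equationy2}) (in which the diffusion term is $H\,\dd B(t)$ alone, so only $f$ enters the fixed point), and then observes that the same state $y$ solves (\ref{equationy1}) with the modified control $H^{*}=H-g(\omega,t,x,y)$, which lies in $L^2_{\mathbb{F}}(0,T;L^2(G;\mathbb{C}))$ by $(H_4)$ alone. Your argument is repaired by inserting this reduction at the outset; without it, it does not prove the statement as given. A secondary point, common to both formulations and worth making explicit, is that the fixed-point map is only well defined once one fixes a canonical (and, for the difference estimate, linear in $(y_0,F)$) selection of controls in the linear step; the paper's construction via penalized optimal control problems and weak limits provides this.
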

\begin{theorem}\label{controlb}
Let assumptions $(H_{1})$-$(H_{3})$ and $(H_{8})$-$(H_{10})$ be satisfied. Then system $(\ref{equationyb1})$ is globally null controllable.
\end{theorem}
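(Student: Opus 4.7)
The plan is to reduce Theorem \ref{controlb} to the linear null controllability result with $L^2$-valued source term established earlier in this paper, by means of a Banach fixed-point argument. For any $(\tilde y,\tilde Y)$ in the Banach space $\mathcal{X}:=L^2_\mathbb{F}(0,T;L^2(G;\mathbb{C}))\times L^2_\mathbb{F}(0,T;L^2(G;\mathbb{C}))$, set
\begin{equation*}
  F(\omega,t,x):=\Upsilon(\omega,t,x,\tilde y(\omega,t,x),\tilde Y(\omega,t,x)).
\end{equation*}
Assumptions $(H_{8})$--$(H_{10})$ guarantee $F\in L^2_\mathbb{F}(0,T;L^2(G;\mathbb{C}))$ with $|F|\leq\kappa_{2}(|\tilde y|+|\tilde Y|)$. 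Feeding $F$ into the linear backward system with terminal condition $y(T)=y_T$ and distributed control $h$, the linear controllability result supplies an $h\in L^2_\mathbb{F}(0,T;L^2(G_{0};\mathbb{C}))$ such that the associated state $(y,Y)$ satisfies $y(0,\cdot)=0$ in $G$, $\mathbb{P}$-a.s., together with a state-control estimate of the form
\begin{equation*}
  \EE\intt\!\int_G(|y|^2+|Y|^2)\dxt+\EE\intt\!\int_{G_{0}}|h|^2\dxt\leq C\Big(\EE\int_G|y_T|^2\dx+\EE\intt\!\int_G|F|^2\dxt\Big).
\end{equation*}

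Selecting $h$ as the minimum-norm control (Hilbert Uniqueness Method) makes this choice unique, so $(\tilde y,\tilde Y)\mapsto(y,Y)$ defines a single-valued, affine map $\Phi\colon\mathcal{X}\to\mathcal{X}$. Any fixed point of $\Phi$ provides a solution of (\ref{equationyb1}) satisfying $y(0,\cdot)=0$, so the whole task reduces to finding such a fixed point. Since $\Phi(\tilde y_1,\tilde Y_1)-\Phi(\tilde y_2,\tilde Y_2)$ solves the linearized system with zero terminal datum and source $\Upsilon(\cdot,\tilde y_1,\tilde Y_1)-\Upsilon(\cdot,\tilde y_2,\tilde Y_2)$, the previous estimate together with $(H_{10})$ yields
\begin{equation*}
  \EE\intt\!\int_G(|y_1-y_2|^2+|Y_1-Y_2|^2)\dxt\leq C\kappa_{2}^2\,\EE\intt\!\int_G(|\tilde y_1-\tilde y_2|^2+|\tilde Y_1-\tilde Y_2|^2)\dxt.
\end{equation*}

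The main obstacle is that $C\kappa_{2}^{2}$ is not generally less than $1$, so a naive contraction on $\mathcal{X}$ fails for arbitrary global Lipschitz constant $\kappa_{2}$. I would overcome this by passing to a weighted variant of $\mathcal{X}$ — either an exponential weight $e^{-\sigma(T-t)}$ with $\sigma$ large, or, more intrinsically, the Carleman weight $e^{-2s\theta}$ used to derive the linear controllability result — while tracking carefully the dependence of the controllability constant $C$ on the Carleman parameters $s$ and $\lambda$. As emphasized in the introduction, the failure of compactness for stochastic function spaces (see \cite[Remark 2.5]{Tang2009Null}) rules out the Schauder-type arguments used in the deterministic setting, so one must prove the linear result in a quantitative form whose parameters can be tuned to absorb the Lipschitz constant $\kappa_{2}$. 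Once $\Phi$ is shown to be a strict contraction on the (still Banach) weighted space, its unique fixed point $(y^{\ast},Y^{\ast})$ and the corresponding minimum-norm control $h^{\ast}$ drive (\ref{equationyb1}) from $y_T$ at time $T$ to $0$ at time $0$, completing the proof.
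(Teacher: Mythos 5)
Your overall strategy coincides with the paper's: reduce to a linear null controllability result with an $L^2$-valued source and run a Banach fixed point in a Carleman-weighted space whose parameters are tuned to absorb $\kappa_2$ (the paper's map $\mathscr{A}$ acts on the source $F$ rather than on the state $(\tilde y,\tilde Y)$, but that is an immaterial reformulation). The problem is that everything you write before your last paragraph is, as you yourself observe, insufficient, and the last paragraph is a plan rather than a proof: the entire difficulty of the theorem is concentrated in the weighted linear estimate you defer. Concretely, what is missing is the analogue of (\ref{controlb1est}): a bound in which the source is measured with the weight $\tilde\theta^{-2}\lambda^{-3}\mu^{-4}\tilde\xi^{-3}$ while the state $y$ is measured with the stronger weight $\tilde\theta^{-2}$ and $Y$, $\nabla y$ with $\tilde\theta^{-2}\lambda^{-2}\mu^{-2}\tilde\xi^{-2}$. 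It is this \emph{mismatch of weights} between input and output — not merely "large parameters" — that produces the contraction constant $C\kappa_2^2\lambda^{-1}\mu^{-2}<1$. Obtaining it requires (i) a Carleman estimate for the adjoint \emph{forward} equation with a weight that does not degenerate at $t=T$ (Theorem \ref{carleb2}, which the paper gets from the deterministic/random case), (ii) the penalized optimal control construction transferring that estimate into the weighted bound on $(y,h)$, and, crucially, (iii) a separate It\^o-formula energy argument (the computation leading to (\ref{77})) to bound the weighted norm of the martingale component $Y$. Point (iii) cannot be skipped here: since $\Upsilon$ depends on $Y$, your contraction estimate needs control of $|Y_1-Y_2|^2$ in a weight at least as strong as the source weight, and the duality/HUM step alone does not provide any weighted bound on $Y$.

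Two further cautions. First, your alternative suggestion of a Bielecki-type weight $e^{-\sigma(T-t)}$ is unlikely to work: the map $(\tilde y,\tilde Y)\mapsto(y,Y)$ is not of Volterra type, because the minimum-norm (or penalized) control at time $t$ depends on the source over all of $[0,T]$, so there is no causal structure for a time-exponential weight to exploit; the Carleman-weight route is the one that closes. Second, the well-definedness and uniqueness of the control selection is handled in the paper not by abstract HUM but by the limit $\varepsilon\to 0$ of the penalized problems (\ref{62}), together with a weak-convergence identification of the limit state; if you invoke "the minimum-norm control" you should at least note that measurability/adaptedness of the selection comes for free from this variational construction.
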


Our strategy for proving Theorem \ref{control} is as follows.
To overcome the lack of compactness mentioned above, we borrow some ideas from \cite{Hernandez-Santamaria2023Global,Liu2014Global}.
First, we obtain a new global Carleman estimate for the backward stochastic complex Ginzburg-Landau equation with a $L^2$-valued source by introducing a suitable singular weighted function.
Next, by combining the new Carleman estimate  and Hilbert Uniqueness Method (HUM) introduced in \cite{Lions1988Exact}, we establish the null controllability for a linear system of the form
\begin{equation}\nonumber
	\left\{
		\begin{aligned}
	&\dd y-(a+\rmm{i}b)\sum_{j,k}^{n}(a^{jk}y_{j})_{k}\dt=(F+\chi_{G_0}h)\dt+H\dd B(t) &\textup{in}\ &Q,\\
    &y=0 &\textup{on}\ &\Sigma,\\
    &y(0)=y_0 &\textup{in}\ &G,
            \end{aligned}
    \right.
\end{equation}
where the source $F$ is in some suitable functional space.
At last, we need to prove that a nonlinear mapping $F\rightarrow f(\omega,t,x,y)$ is strictly contractive in a suitable functional space. Through a Banach fixed point method which does not rely on any compactness argument, we can obtain the null controllability for (\ref{equationy1}).
The strategy to prove Theorem \ref{controlb} is very close to that of Theorem \ref{control}, but one major difference can be found.
For this case, it is not necessary to prove a Carleman estimate for the forward stochastic Ginzburg-Landau equation. Actually, it suffices to use the deterministic Carleman inequality and employ the duality method introduced by \cite{Liu2014Global}.

Now, we give some remarks in order.
\begin{remark}\label{remark1}
We claim that the null controllability of equation (\ref{equationy1}) can be reduced to the null controllability of
\begin{equation}\label{equationy2}
	\left\{
		\begin{aligned}
	&\dd y-(a+\rmm{i} b)\sum_{j,k=1}^{n}(a^{jk}y_j)_k\dt=[f(\omega,t,x,y)+\chi_{G_0}h]\dt+H\dd B(t) &\textup{in}\ &Q,\\
    &y=0 &\textup{on}\ &\Sigma,\\
    &y(0)=y_0 &\textup{in}\ &G,
            \end{aligned}
    \right.
\end{equation}
Indeed, assume that one can find two controls $h\in L^2_{\mathbb{F}}(0,T;L^2(G_{0};\mathbb{C}))$ and $H\in  L^2_{\mathbb{F}}(0,T;L^2(G;\\\mathbb{C}))$ such that the corresponding solution $y$ of (\ref{equationy2}) satisfies $y(T,\cdot)=0$ in $G$, $\mathbb{P}$-a.s.
Since the control $H$ is distributed in the whole domain $G$, the state $y$ still satisfies equation (\ref{equationy1}) with the controls $h^{*}=h$ and
\begin{equation}\nonumber
	H^{*}=H-g(\omega,t,x,y)\in L^2_{\mathbb{F}}(0,T;L^2(G;\mathbb{C})),
\end{equation}
which is well defined by $(H_{4})$. Moreover, we still have the
controllability property i.e. $y(T,\cdot)=0$ in $G$, $\mathbb{P}$-a.s. This is why we can drop the Lipschitz condition $(H_{7})$ on $g$ in Theorem \ref{control}.
\end{remark}
\begin{remark}
From the following, it can be found that since $0<T<1$ is assumed, we actually obtain that the system (\ref{equationy1}) and (\ref{equationyb1}) are globally null controllable at any small time.
\end{remark}
\begin{remark}
From Theorem \ref{control} and Theorem \ref{controlb}, we can obtain the null controllability for semilinear stochastic parabolic equations, which implies that the results in this paper include the results in \cite{Hernandez-Santamaria2023Global}.
\end{remark}
\begin{remark}
Theorem \ref{control} requires an extra control $H\in  L^2_{\mathbb{F}}(0,T;L^2(G;\mathbb{C}))$ on the diffusion term.
It would be quite interesting to establish the null controllability for (\ref{equationy1}) by only one control force or the control $H$ acting only on a sub-domain of $G$.
However, this seems difficult for us.
In fact, these problems are still open, even for general linear SPDEs (see e.g., \cite{Fu2017Controllability,Gao2015Observability,Tang2009Null}).
\end{remark}
\begin{remark}
Theorem \ref{control} and Theorem \ref{controlb} provide a partial positive answer to the open question provided in \cite[Remark1.10]{Fu2017Controllability}.
However, the global controllability for system (\ref{equationy1}) and  (\ref{equationyb1}) with more general nonlinearities $f(\cdot)$, $g(\cdot)$ and $\Upsilon(\cdot)$, such as the super-linear nonlinearity considered for deterministic Ginzburg-Landau equation (see e.g., \cite{Fu2009Null}) or parabolic-type PDEs (see e.g., \cite{Fernandez-Cara1995Null,Fernandez-Cara2000Null, Kassab2020Null}), is still an interesting but challenging problem.
\end{remark}
As mentioned above, we prove Theorem \ref{control} and Theorem \ref{controlb} by Carleman estimates.
In the past few years, the Carleman estimates for SPDEs have received much attention.
Although there are numerous results for the global Carleman estimate for deterministic Ginzburg-Landau equation and parabolic-type equation (e.g.,  \cite{Dou2023Global,Fu2009Null,Fursikov1996Controllability}),  the Carleman estimates for the stochastic counterpart are much less.
In this respect, we refer to \cite{Fu2017A,Fu2017Controllability,Liu2014Global,Liu2019Carleman,Lu2011Some,Lu2012Carleman,Tang2009Null,Wu2020Carleman,Yuan2017Conditional} for some known results. In \cite{Fu2017A,Fu2017Controllability}, authors studied Carleman estimats for stochastic complex Ginzburg-Landau equations. The references \cite{Liu2014Global,Liu2019Carleman,Lu2011Some,Lu2012Carleman,Tang2009Null,Wu2020Carleman,Yuan2017Conditional} are devoted to stochastic parabolic equations, where
\cite{Liu2019Carleman,Wu2020Carleman} are concerned with stochastic degenerate parabolic equations. We can also refer to \cite{Hernandez-Santamaria2023Global,Liao2024Exact, Liao2024Stability,Yu2022Carleman,Yuan2021Inverse,Zhang2024New,Zhang2024Unique,Zhang2024Determination} for some
 Carleman estimates of SPDEs and their applications.

The remainder of the paper is organized as follows. In Section 2, we present the proof of Theorem \ref{control}. In particular, we give a new Carleman estimate for the backward stochastic complex Ginzburg-Landau equation. In Section 3, we present the proof of Theorem \ref{controlb}.

\section{Controllability of a semilinear forward stochastic complex Ginzburg-Landau equation}
\subsection{A new Carleman estimate for a backward stochastic complex Ginzburg-Landau equation}
In this part, we need to establish the Carleman estimate for the following backward stochastic complex Ginzburg-Landau equation with a source in drift term:
\begin{equation}\label{equationz}
	\left\{
		\begin{aligned}
	&\dd z+(a-\rmm{i} b)\sum_{j,k=1}^{n}(a^{jk}z_j)_k\dt=\Xi\dt+Z\dd B(t) &\textup{in}\ &Q,\\
    &z=0 &\textup{on}\ &\Sigma,\\
    &z(T)=z_T &\textup{in}\ &G.
            \end{aligned}
    \right.
\end{equation}

At first, we introduce an identity for a stochastic parabolic operator with complex principal parts, which plays a key role in proving the Carleman estimates for complex Ginzburg-Landau equations.
Set
\begin{equation}\nonumber
\mathcal{L}z:=\dd z+(a-\rmm{i} b)\sum_{j,k=1}^{n}(a^{jk}z_j)_k\dt.
\end{equation}
\begin{lemma}[{\cite[Lemma 2.1]{Fu2017Controllability}}]\label{identity}
Suppose that $\hat{\ell} \in C^3(Q;\mathbb{R})$ and $\Phi \in C^1(Q;\mathbb{C})$. Let $\hat{y}$ be an $H^2(G;\mathbb{C})$-valued continuous semimartingle. Set $\hat{\theta}=e^{\hat{\ell}}$ and $v=\hat{\theta} z$. Then for a.e. $(x, t)\in Q$, it holds that	
\begin{equation}\label{identity1}
\begin{split}
 2\rmm{Re}(\hat{\theta}\barr{I_1}{\mathcal L}z)
=&2|I_1|^2\dt
 +\dd M
 +\sum_{k=1}^nV_{k}^k
 +B|v|^2\dt
 +\sum_{j,k=1}^nD^{jk}v_{j}\barr{v}_{k}\dt\\
 &-2\sum_{j=1}^n[\rmm{Re}(aE^j\barr{v}v_j)-\rmm{Im}(bF^jv\barr{v}_j)]\dt
 +\sum_{j,k=1}^naa^{jk}\dd v_j\dd \barr{v}_k\\
 &+(\hat{\ell}_t-aA)\dd v\dd \barr{v}
 -2b\sum_{j,k=1}^na^{jk}\hat{\ell}_k\rmm{Im}(\dd v\dd \barr{v}_j)\\
 &+2\bigg[b\sum_{j,k=1}^n(a^{jk}\hat{\ell}_k)_j\rmm{Im}(v\dd \barr{v})+\rmm{Re}(\barr{\Phi}\barr{v}\dd v)\bigg],
\end{split}
\end{equation}
where
\begin{equation}\label{A}
	\left\{
		\begin{aligned}
	&A=\sum_{j,k=1}^n[a^{jk}\hat{\ell}_{j}\hat{\ell}_{k}-(a^{jk}\hat{\ell}_{j})_{k}],\quad
	\Lambda=\sum_{j,k=1}^n(a^{jk}v_{j})_k+Av,\\
	&I_1=a\Lambda+2\rmm{i}b\sum_{j,k=1}^na^{jk}\hat{\ell}_{j}v_{k}+(\Phi-\hat{\ell}_t)v,
        \end{aligned}
    \right.
\end{equation}
\begin{flalign}
\hspace{5mm}
B=&2(a^2+b^2)\sum_{j,k=1}^n(Aa^{jk}\hat{\ell}_j)_k
   -aA_t
   -2aA\rmm{Re}\Phi
   -2bA\rmm{Im}\Phi
   &\nonumber\\
   &+2\rmm{Re}(\Phi\hat{\ell}_t)
   -2|\Phi|^2
   +\hat{\ell}_{tt},
&\nonumber  \\
M=&aA|v|^2
  +\sum_{j,k=1}^na^{jk}[-av_j\barr{v}_k+2b\hat{\ell}_j\rmm{Im}(\barr{v}_kv)]
  -\hat{\ell}_{t}|v|^2,
&\nonumber  \\
V^k=&2a\sum_{j=1}^na^{jk}\rmm{Re}(v_j\dd \barr{v})
     -2b\sum_{j=1}^na^{jk}\hat{\ell}_{j}\rmm{Im}(v\dd \barr{v})
     -2A(a^2+b^2)\sum_{j=1}^na^{jk}\hat{\ell}_{j}|v|^2\dt
     &\nonumber
\\
     &-2a\sum_{j=1}^na^{jk}\rmm{Re}(\barr{v}_j\Phi v)\dt
     +2b\sum_{j=1}^na^{jk}\rmm{Im}[v_j(\barr{\Phi}-\hat{\ell}_t)\barr{v}]\dt
     &\nonumber \\
  &+2(a^2+b^2)\sum_{j,j',k'=1}^n[a^{jk}a^{j'k'}\hat{\ell}_{j}v_{j'}\barr{v}_{k'}-a^{jk'}a^{j'k}\hat{\ell}_{j}(v_{j'}\barr{v}_{k'}+\barr{v}_{j'}v_{k'})]\dt,
&\nonumber  \\
D^{jk}=&aa^{jk}_{t}
        +2b\rmm{Im}\Phi a^{jk}
        +2a\rmm{Re}\Phi a^{jk}
        &\nonumber \\
        &+2(a^2+b^2)\sum_{j',k'=1}^n[a^{jk'}(a^{j'k}\hat{\ell}_{j'})_{k'}+a^{k'k}(a^{j'j}\hat{\ell}_{j'})_{k'}-(a^{jk}a^{j'k'}\hat{\ell}_{j'})_{k'}],
&\nonumber  \\
E^j=&\sum_{k=1}^na^{jk}(2\hat{\ell}_{k}\barr{\Phi}-2\hat{\ell}_{k}\hat{\ell}_{t}-\barr{\Phi}_k),
&\nonumber  \\
\text{and}\quad &F^{j}=\sum_{k=1}^n(a^{jk}\Phi_{k}-2a^{jk}\hat{\ell}_{tk}-a_{t}^{jk}\hat{\ell}_{k}-2a^{jk}\hat{\ell}_{k}\Phi).
&\nonumber
\end{flalign}
\end{lemma}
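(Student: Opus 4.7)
The plan is to derive this pointwise weighted identity by the change of variables $v=\hat{\theta}z$ and then reorganize the resulting expression through spatial integration by parts combined with It\^o calculus, in the spirit of the Fursikov--Imanuvilov--Liu--Zhang pointwise method adapted to complex principal parts. First I would substitute $z=e^{-\hat{\ell}}v$ into $\mathcal{L}z$; using $z_j=e^{-\hat{\ell}}(v_j-\hat{\ell}_j v)$ together with the symmetry $a^{jk}=a^{kj}$, a direct expansion yields
\[
\hat{\theta}\sum_{j,k=1}^n(a^{jk}z_j)_k \;=\; \Lambda \;-\; 2\sum_{j,k=1}^n a^{jk}\hat{\ell}_j v_k,
\]
with $\Lambda$ and $A$ as in (\ref{A}). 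Since $\hat{\theta}$ is deterministic and $C^3$ in $t$, we have $\hat{\theta}\,\dd z=\dd v-\hat{\ell}_t v\,\dd t$, so that $\hat{\theta}\mathcal{L}z$ can be rewritten in the form $I_1\,\dd t + I_2\,\dd t + \dd v$, where $I_2$ collects the remaining first-order and zeroth-order pieces and absorbs the coupling with $\Phi v$.

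Next I would compute $2\rmm{Re}(\hat{\theta}\barr{I_1}\mathcal{L}z)$. The square $2|I_1|^2\dd t$ is extracted immediately, leaving a remainder that is linear in $I_1$ and in the quantities $v,\nabla v,\dd v$. I would regroup this remainder into three families: the purely spatial cross products such as $\rmm{Re}(\barr{\Lambda}\barr{v}\,\Phi)$ and $\rmm{Re}(a^{jk}\hat{\ell}_j v_k\barr{\Lambda})$; the quadratic forms $\rmm{Re}(a^{jk}a^{j'k'}\hat{\ell}_j v_{j'}\barr{v}_{k'})$ coming from the interaction of $\Lambda$ with the first-order $\rmm{i}b$-term in $I_1$; and the couplings with $\dd v$. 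Applying Leibniz on the spatial products and exploiting $a^{jk}=a^{kj}$, all derivatives falling on products extract the total divergence $\sum_k V_k^k$, while the non-divergence remainders reorganize precisely as $B|v|^2\dd t+\sum_{j,k}D^{jk}v_j\barr{v}_k\dd t-2\sum_j[\rmm{Re}(aE^j\barr{v}v_j)-\rmm{Im}(bF^j v\barr{v}_j)]\dd t$; the coefficients $B,D^{jk},E^j,F^j$ are then read off from this accounting.

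The stochastic contributions are handled by It\^o's formula. Applying the It\^o product rule to $aA|v|^2$, to $\sum a^{jk}v_j\barr{v}_k$, to $\hat{\ell}_j\rmm{Im}(\barr{v}_k v)$, and to $\hat{\ell}_t|v|^2$ identifies the exact differential $\dd M$ with $M$ as stated, and simultaneously produces the quadratic-variation corrections $aa^{jk}\dd v_j\dd\barr{v}_k$, $(\hat{\ell}_t-aA)\dd v\,\dd\barr{v}$, and $-2b\sum a^{jk}\hat{\ell}_k\rmm{Im}(\dd v\,\dd\barr{v}_j)$ that are listed in (\ref{identity1}). The leftover lower-order contributions $2b\sum(a^{jk}\hat{\ell}_k)_j\rmm{Im}(v\,\dd\barr{v})+2\rmm{Re}(\barr{\Phi}\barr{v}\,\dd v)$ are exactly what remains after those identifications, completing the identity.

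The main obstacle is purely algebraic: one must track every term without error through the change of variables, the multiplications by $2\rmm{Re}(\hat{\theta}\barr{I_1})$, and the spatial integration by parts, while keeping real and imaginary parts carefully separated. In particular, the cross coupling between the second-order operator with complex coefficient $a-\rmm{i}b$ and the first-order part $2\rmm{i}b\sum a^{jk}\hat{\ell}_j v_k$ of $I_1$ is what generates the asymmetric $E^j,F^j$ contributions, and combining this spatial bookkeeping with the It\^o product rule (which contributes quadratic-variation terms not present in the deterministic analogue) is the delicate step where mistakes are easiest to make.
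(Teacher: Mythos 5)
The paper does not actually prove this lemma---it is quoted verbatim from \cite[Lemma 2.1]{Fu2017Controllability}---and your sketch reproduces exactly the computation used there: substitute $v=\hat{\theta}z$ to get $\hat{\theta}\mathcal{L}z=(I_1+I_2)\,\dd t+\dd v$ with $I_2=-\rmm{i}b\Lambda-2a\sum_{j,k}a^{jk}\hat{\ell}_jv_k-\Phi v$, extract $2|I_1|^2\,\dd t$, reorganize $2\rmm{Re}(\barr{I_1}I_2)$ into the divergence $\sum_k V^k_k$ plus the quadratic forms with coefficients $B,D^{jk},E^j,F^j$, and treat $2\rmm{Re}(\barr{I_1}\dd v)$ with It\^o's product rule to produce $\dd M$ and the quadratic-variation corrections. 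The only (cosmetic) slip is your description of $I_2$ as collecting ``first-order and zeroth-order pieces,'' since it retains the second-order term $-\rmm{i}b\Lambda$; otherwise the plan is the correct and standard one.
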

To state our Carleman estimates for (\ref{equationz}), we first introduce the weight function. To begin with, we introduce some auxiliary functions.
Let $G'$ be any given nonempty open subset of $G$ satisfying $G'\subset\subset G_{0}$. It can be seen from \cite{Fursikov1996Controllability}  that there exists a function $\beta\in C^4(\barr{G})$ such that
\begin{equation}\label{beta}
	\left\{
		\begin{aligned}
	&0<\beta(x)\leq 1\quad\forall x\in G,\\
	&\beta(x)=0\quad \forall x\in \partial G,\\
	&\inf_{G\backslash \barr{G'}}\{|\nabla\beta(x)|\}\geq \alpha_0 >0.
	    \end{aligned}
    \right.
\end{equation}
Without loss of generality, in what follows we assume that $0<T<1$. For some constant $m\geq 1$ and $\sigma \geq 2$, we define the following weight function depending on the time variable:
\begin{equation}\label{gamma}
	\left\{
		\begin{aligned}
	&\gamma(t)=1+\bigg(1-\frac{4t}{T}\bigg)^{\sigma}, &t\in &[0,T/4),\\
	&\gamma(t)=1,&t\in &[T/4,T/2),\\
	&\gamma(t)\ \text{is increasing on}\ [T/2,3T/4),\\
	&\gamma(t)=\frac{1}{(T-t)^m},& t\in &[3T/4,T],\\
	&\gamma(t)\in C^2([0,T)).
	    \end{aligned}
    \right.
\end{equation}
Set
\begin{equation}\label{99}
    \alpha(x):=e^{\mu(6m+\beta(x))}-\mu e^{\mu(6m+6)},\quad
	\varphi(t,x):=\gamma(t)\alpha(x),\quad
    \xi(t,x):=\gamma(t)e^{\mu(6m+\beta(x))},
\end{equation}
where $\mu$ is a positive parameter with $\mu\geq 1$ and $\sigma$ is chosen as
\begin{equation}\label{sigma}
\sigma=\lambda\mu^2e^{\mu (6m-4)}	
\end{equation}
for some parameter $\lambda\geq 1$.
We finally set the weight
\begin{equation}\label{theta}
	\theta :=e^{\ell},\ \text{where}\ \ell(t,x):=\lambda\varphi(t,x).
\end{equation}
It can be seen from the (\ref{theta})  that, compared with the classical weight function (see, e.g. \cite{Tang2009Null}), the main difference here is that the weight does not degenerate as $t\rightarrow 0^+$.

In the following, for any $k\in \mathbb{N}$, we denote by $\mathcal{O}(\mu^{k})$ a function of order $\mu^{k}$ for sufficiently large $\mu$, and by $\mathcal{O}_{\mu}(\lambda^{k})$ a function of order $\lambda^{k}$ for fixed $\mu$ and sufficiently large $\lambda$.

Now, we state the results of the Carleman estimate.
\begin{theorem}\label{carle1}
Assume that $\Xi\in L^2_{\mathbb{F}}(0,T;L^2(G;\mathbb{C}))$,  then there exist $\lambda_{0}>0$ and $\mu_{0}>0$ such that the unique solution $(z,Z)\in \mathcal{H}_{T}\times L^2_{\mathbb{F}}(0,T;L^2(G;\mathbb{C}))$ to $(\ref{equationz})$ with respect to $r_{T}\in L^2_{\mathcal{F}_{T}}(\Omega;L^{2}(G;\mathbb{C}))$ satisfies
    \begin{equation}\label{carest1}
    \begin{split}
	&\EE \int_{G}\lambda^2\mu^3e^{2\mu(6m+1)}\theta^2(0)|z(0)|^2\dx
	+\EE \int_{Q}\lambda\mu^2\xi\theta^2|\nabla z|^2\dxt\\
	&+\EE \int_{Q}\lambda^3\mu^4\xi^3\theta^2|z|^2\dxt \\
	\leq &
	C\bigg(\EE \int_{0}^{T}\int_{G_0} \lambda^3\mu^4\xi^3\theta^2|z|^2\dxt
	+\EE \int_{Q}\theta^2|\Xi|^2\dxt\\
	&+\EE \int_{Q}\lambda^2\mu^2\xi^3\theta^2|Z|^2\dxt
	\bigg),
	\end{split}
	\end{equation}	
for all $\lambda\geq\lambda_{0}$ and $\mu\geq\mu_{0}$, where  $C>0$ only depends on $G$ and $G_0$.
\end{theorem}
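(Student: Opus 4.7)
The plan is to apply the pointwise weighted identity of Lemma \ref{identity} with $\hat\ell = \ell = \lambda\varphi$ from \eqref{theta}, choosing the free function $\Phi$ in a natural way (e.g.\ $\Phi = \ell_t$) so that the leading $\ell_t$-contributions in $B$ are cancelled and $E^j$ vanishes to leading order. Setting $v = \theta z$ and integrating the identity over $Q$ and taking expectation splits the estimate into (a) bulk positive terms from $B|v|^2$ and $\sum D^{jk}v_j\barr{v}_k$, (b) time-boundary terms from $\dd M$, (c) space-boundary terms from $\sum V^k_k$ which vanish thanks to $z|_\Sigma = 0$, (d) Itô corrections involving the diffusion $Z$, and (e) the source pairing $2\rmm{Re}(\barr{\Phi}\barr{v}\mathcal{L}z)$. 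The nonnegative reserve $2|I_1|^2$ is kept on hand to absorb quadratic errors in gradients and in $Z$.

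The weight $\alpha(x) = e^{\mu(6m+\beta)} - \mu e^{\mu(6m+6)}$ is designed so that, on $G\setminus G'$ (where $|\nabla\beta|\geq\alpha_0$ by \eqref{beta}), a direct computation yields $B \geq c_0\lambda^3\mu^4\xi^3$ and $\sum D^{jk}\eta^j\barr{\eta^k} \geq c_0\lambda\mu^2\xi|\eta|^2$ for $\mu$ then $\lambda$ sufficiently large. On the remaining set $G'\subset\subset G_0$ these lower bounds may fail, and a standard cutoff integration by parts transfers the deficit into the observation integral $\EE\int_0^T\int_{G_0}\lambda^3\mu^4\xi^3\theta^2|z|^2\,\dxt$ on the right of \eqref{carest1}. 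The subleading contributions from $E^j$, $F^j$, and the $aA\,\rmm{Re}\Phi$, $bA\,\rmm{Im}\Phi$ pieces of $B$ are of strictly lower order in $\lambda$ or $\mu$ and are absorbed by the bulk positive terms.

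Integrating $\dd M$ over $[0,T]$ produces $\EE[M(T) - M(0)]$. Because $\gamma(t)\to\infty$ as $t\to T^-$ and $\alpha < 0$, $\theta(T,\cdot)\equiv 0$ so $M(T) = 0$. At $t = 0$, by contrast, $\gamma(0) = 2$ is bounded and $\alpha(x)$ is bounded and nontrivial, so $\theta(0)$ is non-degenerate; the surviving $-\EE M(0)$ contains $\EE\int_G[aA(0) - \ell_t(0)]\theta^2(0)|z(0)|^2\dx$. The explicit form of $\gamma$ on $[0,T/4]$ together with the calibration $\sigma = \lambda\mu^2 e^{\mu(6m-4)}$ from \eqref{sigma} forces this contribution to be bounded below by $c\lambda^2\mu^3 e^{2\mu(6m+1)}\theta^2(0)|z(0)|^2$, producing the first term on the left of \eqref{carest1}. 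This is exactly the new feature made possible by the non-degenerate weight and is what later drives the HUM-based null controllability at $t=0$.

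The main obstacle is controlling the Itô corrections $\EE\int_Q[aa^{jk}\dd v_j\dd\barr{v}_k + (\ell_t - aA)\dd v\dd\barr{v} - 2b\sum a^{jk}\ell_k\,\rmm{Im}(\dd v\dd\barr{v}_j)]$. Substituting $\dd v = \theta Z\dd B + (\text{drift})\dt$ and $\dd v_j = \partial_j(\theta Z)\dd B + \cdots$ reduces the correction to quadratic expressions in $Z$ and $\nabla Z$ with smooth deterministic coefficients. The $|\nabla Z|^2$ pieces must be traded against a small fraction of the $|I_1|^2$ reserve via Young's inequality, which costs one extra power of $\xi$; combined with the coefficient magnitudes $aA$ and $|\nabla\ell|^2 = \mathcal{O}(\lambda^2\mu^2\xi^2)$, this produces precisely the weight $\lambda^2\mu^2\xi^3\theta^2$ on $|Z|^2$ in \eqref{carest1}. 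The source pairing is dominated by Cauchy--Schwarz as $\EE\int_Q\theta^2|\Xi|^2\dxt$ plus a piece absorbed by $|I_1|^2$. Taking $\mu\geq\mu_0$ large to activate the $\mu$-powers, then $\lambda\geq\lambda_0$ large to close the absorption, and using $0<T<1$ to keep constants uniform yields \eqref{carest1}.
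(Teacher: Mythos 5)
Your overall architecture matches the paper's: the weighted identity of Lemma \ref{identity} with $\hat\ell=\ell$, the decomposition into bulk, time-boundary, space-boundary, It\^o-correction and source terms, the extraction of the $|z(0)|^2$ term from $-M(0)$ via the size of $\ell_t(0)$ forced by the calibration (\ref{sigma}), and the final cutoff argument sending the local terms into $G_0$. However, your one concrete choice of the auxiliary function, $\Phi=\ell_t$, does not work, and this is not a cosmetic point. It does not ``cancel the leading $\ell_t$-contributions in $B$'': while $2\,\rmm{Re}(\Phi\ell_t)-2|\Phi|^2$ indeed vanishes, the term $-2aA\,\rmm{Re}\,\Phi$ becomes $-2aA\ell_t$, which on $(0,T/4)$ is \emph{negative} (there $\gamma_t<0$ and $\alpha<0$, so $\ell_t=\lambda\gamma_t\alpha>0$, while $A=\lambda^2\mu^2\xi^2\Psi+\cdots>0$) and of size $\frac{|\gamma_t|}{\gamma}\lambda^3\mu^2\xi^2\Psi|\varphi|$. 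Since the non-degenerate weight forces $|\gamma_t|\sim\sigma=\lambda\mu^2e^{\mu(6m-4)}$ near $t=0$, this term exceeds the positive bulk term $\lambda^3\mu^4\xi^3\Psi^2$ by a factor of order $\lambda\mu e^{\mu(6m+1)}/T$ and cannot be absorbed; the obstruction is created precisely by the steep ramp of $\gamma$ on $[0,T/4]$ that makes this Carleman estimate new. Moreover, for the complex operator the paper's choice $\Phi=2(a+\rmm{i}b)\sum_{j,k}(a^{jk}\ell_j)_k$ is what allows (i) $-2aA\,\rmm{Re}\,\Phi-2bA\,\rmm{Im}\,\Phi$ to combine with $2(a^2+b^2)\sum_{j,k}(Aa^{jk}\ell_j)_k$ into the positive leading term $2(a^2+b^2)\lambda^3\mu^4\xi^3\Psi^2$, and (ii) the pairing $2b\sum_{j,k}(a^{jk}\ell_k)_j\rmm{Im}(v\,\dd\barr{v})+2\rmm{Re}(\barr{\Phi}\barr{v}\,\dd v)$ to close into $2\sum_{j,k}(a^{jk}\ell_k)_j\rmm{Re}[(2a-\rmm{i}b)\barr{v}\,\dd v]$, whose expansion supplies the $(-a^2+b^2)\lambda^3\mu^4\xi^3|\Psi|^2|v|^2$ contribution yielding the net positive coefficient $a^2+3b^2$. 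A real $\Phi$ destroys this balance, so the proof as sketched does not close.

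Two secondary inaccuracies: the $|\nabla Z|^2$ pieces from the It\^o corrections cannot be traded against the $|I_1|^2$ reserve, since $I_1$ is built from $v$ and $\nabla v$ only and contains no $Z$; they must be absorbed by the \emph{positive} quadratic-variation term $\sum_{j,k}aa^{jk}\,\dd v_j\,\dd\barr{v}_k\geq as_0\theta^2|\nabla Z+Z\nabla\ell|^2$, which is what leaves the weight $\lambda^2\mu^2\xi^3$ on $|Z|^2$. And the space-boundary term $\sum_k V^k\nu_k$ does not vanish on $\Sigma$; it survives but has a favorable sign because $\partial\beta/\partial\nu\leq 0$ there, which is why it can be dropped. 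Neither of these is fatal to the strategy, but the choice of $\Phi$ is.
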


\begin{remark}
This type of Carleman estimate was first considered in \cite{Badra2016Local} to
deal with the local trajectory controllability for the incompressible Navier-Stokes equations.
Later, references \cite{Hernandez-Santamaria2023Global} and \cite{Zhang2024New} developed the ideas in \cite{Badra2016Local} to study the global null controllability of stochastic semilinear parabolic equations.
\end{remark}

\begin{proof}[\bf Proof of Theorem \ref{carle1}:]
The proof will be divided into several steps.

{\noindent\bf Step 1.}
We apply Lemma \ref{identity} with $\hat\theta=\theta$, $\hat\ell=\ell$ and set
\begin{equation}\label{Phi}
\Phi=2(a+\rmm{i} b)\sum_{j,k=1}^{n}(a^{jk}\ell_{j})_{k}.
\end{equation}
Integrating (\ref{identity1}) in $Q$, taking mathematical expectation on both sides, we conclude that
\begin{equation}\label{1}
\begin{split}
2\EE\int_{Q}\rmm{Re}({\theta}\barr{I_1}{\mathcal L}z)\dx
= &\ \EE\int_{Q}\dd M\dx
+\EE\int_{Q} B|v|^2\dxt
+\EE\int_{Q}\sum_{j,k=1}^nD^{jk}v_{j}\barr{v}_{k}\dxt
 \\
 & -2\EE\int_{Q}\sum_{j=1}^n[\rmm{Re}(aE^j\barr{v}v_j)-\rmm{Im}(bF^jv\barr{v}_j)]\dxt
 \\
\end{split}
\end{equation}
\begin{equation}\nonumber
\begin{split}
& +\EE\int_{Q}\sum_{k=1}^nV_{k}^k\dx
+\EE\int_{Q}\sum_{j,k=1}^naa^{jk}\dd v_j\dd \barr{v}_k\dx
\\
 &+\EE\int_{Q}({\ell}_t-aA)\dd v\dd \barr{v}\dx
-2b\EE\int_{Q}\sum_{j,k=1}^na^{jk}{\ell}_k\rmm{Im}(\dd v\dd \barr{v}_j)\dx \\
 & +2\EE\int_{Q}\bigg[b\sum_{j,k=1}^n(a^{jk}{\ell}_k)_j\rmm{Im}(v\dd \barr{v})+\rmm{Re}(\barr{\Phi}\barr{v}\dd v)\bigg]\dx
 \\
 & +2\EE\int_{Q}|I_1|^2\dxt
 =:\sum_{i=1}^{10}\mathcal{J}_{i}.
\end{split}
\end{equation}
In addition, it is easy to check that for any $j,k=1,\cdots,n$,
\begin{equation}\nonumber
\ell_{j}=\lambda\mu\xi\beta_{j},\quad
\ell_{jk}=\lambda\mu^2\xi\beta_{j}\beta_{k}+\lambda\mu\xi\beta_{jk},\quad
\ell_{t}=\lambda\gamma'(t)\alpha,
\end{equation}
which are useful in the remainder of the proof.

{\noindent\bf Step 2.} In this step, let us estimate every term on the right side of (\ref{1}).

{\it Estimate for $\mathcal{J}_1$.}
From the definitions of $v=\theta z$ and $\ell$, we easily see that $\lim_{t\rightarrow T-}\ell(t,\cdot)=-\infty$ and thus the term at $t=T$ vanishes. By expression of $M$, we have
\begin{equation}\label{2}
M|_0^T=-M(0)=
	-aA(0)|v(0)|^2
  -\sum_{j,k=1}^na^{jk}[-av_j\barr{v}_k+2b{\ell}_j\rmm{Im}(\barr{v}_kv)]\bigg|_{t=0}
  +{\ell}_{t}(0)|v(0)|^2.
\end{equation}

For the first term on the right-hand side of (\ref{2}), using the explicit expression of $A$ in (\ref{A}) and the fact $\gamma(0)=2$, we can obtain
\begin{equation}\label{9}
\begin{split}
A=&\sum_{j,k=1}^n(\lambda^2\mu^2\xi^2 a^{jk}\beta_j\beta_k
-\lambda\mu^2\xi a^{jk}\beta_j\beta_k
-\lambda\mu\xi a^{jk}_{k}\beta_{j}
-\lambda\mu\xi a^{jk} \beta_{jk})\\
=&\lambda^2\mu^2\xi^2\sum_{j,k=1}^na^{jk}\beta_j\beta_k+{\mathcal O}(\lambda)\mu^2\xi,
\end{split}
\end{equation}
which imply that
\begin{equation}\label{6}
\begin{split}
-aA(0)=&-a[4\lambda^2\mu^2e^{2\mu(6m+\beta)}\sum_{j,k=1}^na^{jk}(0)\beta_j\beta_k
 +{\mathcal O}(\lambda)\mu^2e^{\mu(6m+\beta)} ]\\
 \geq &-C\lambda^2\mu^2e^{2\mu(6m+\beta)}
 +{\mathcal O}(\lambda)\mu^2e^{\mu(6m+\beta)}.
\end{split}
\end{equation}

For the second one, by assumption $(H_{3})$ on the functions $a^{jk}$, we have
\begin{equation}\label{3}
	a\sum_{j,k=1}^na^{jk}v_{j}\barr{v}_k\bigg|_{t=0}\geq as_0|\nabla v(0)|^2.
\end{equation}
And, for any $\epsilon_1>0$, using Young inequality, we obtain
\begin{equation}\label{5}
\begin{split}
&\bigg|2b\sum_{j,k=1}^n a^{jk}\ell_{j}\rmm{Im}(\barr{v}_kv)\bigg|
=\bigg|2b\lambda\mu\xi\sum_{j,k=1}^n  a^{jk}\beta_{j}\rmm{Im}(\barr{v}_kv) \bigg|\\
&\leq \epsilon_1|\nabla v|^2+\frac{C}{\epsilon_1}\lambda^2\mu^2 \xi^2|v|^2.
\end{split}
\end{equation}
Choosing $\epsilon_1=\frac{as_0}{2}$ in (\ref{5}), we get
\begin{equation}\label{4}
\begin{split}
	\bigg|2b\sum_{j,k=1}^n a^{jk}\ell_{j}\rmm{Im}(\barr{v}_kv)\bigg|_{t=0}\bigg|
	\leq \frac{as_0}{2}|\nabla v(0)|^2
	+C \lambda^2\mu^2 e^{2\mu(6m+\beta)}|v(0)|^2,
\end{split}
\end{equation}
From (\ref{3}) and (\ref{4}), we get
\begin{equation}\label{7}
\begin{split}
    -\sum_{j,k=1}^na^{jk}[-av_j\barr{v}_k+2b{\ell}_j\rmm{Im}(\barr{v}_kv)]\bigg|_{t=0}
    \geq  \frac{as_0}{2}|\nabla v(0)|^2
    -C\lambda^2\mu^2 e^{2\mu(6m+\beta)}|v(0)|^2.
\end{split}
\end{equation}

For the third one, from the explicit expression of the function $\gamma(t)$ in (\ref{gamma}), we get
\begin{equation}\label{16}
\gamma'(t)=-\frac{4\sigma}{T}\bigg(1-\frac{4t}{T}\bigg)^{\sigma-1}	\quad \forall t\in [0,T/4].
\end{equation}
Using (\ref{sigma}) and the above expression, we have
\begin{equation}\label{8}
\begin{split}
\ell_{t}(0,x)=&-\frac{4}{T}\lambda^2\mu^2e^{\mu (6m-4)}\alpha(x)\\
=&\frac{4}{T}\lambda^2\mu^2e^{\mu (6m-4)}( \mu e^{\mu(6m+6)}-e^{\mu(6m+\beta(x))})\\
\geq &c_0\lambda^2\mu^3 e^{2\mu(6m+1)}
\end{split}
\end{equation}
for all $\mu> 1$ and some constant $c_0>0$ uniform with respect to $T$.

By (\ref{6}), (\ref{7}), (\ref{8}) and (\ref{2}), there exists $\mu_1>1$, such that for all $\mu\geq \mu_1>1$, it holds that
\begin{equation}\label{58}
M|_0^T \geq \frac{as_0}{2}|\nabla v(0)|^2
+c_1 \lambda^2\mu^3 e^{2\mu(6m+1)}|v(0)|^2
\end{equation}
for some constant $c_1>0$ only depending on $G$ and $G'$.
From (\ref{58}), we can easily get
\begin{equation}\label{39}
\mathcal{J}_{1}:=\EE\int_{G}M|_{0}^T\dx
\geq \frac{as_0}{2}\EE\int_{G}|\nabla v(0)|^2\dx
+c_1 \lambda^2\mu^3 e^{2\mu(6m+1)}\EE\int_{G}|v(0)|^2\dx.
\end{equation}

{\it Estimate for $\mathcal{J}_{2}$.} To abridge the notation, in what
follows, we set
\begin{equation}\nonumber
	\Psi=\sum_{j,k=1}^{n}a^{jk}\beta_j\beta_k.
\end{equation}

By the definition of $B$ in Lemma \ref{identity} and (\ref{Phi}), we have that
\begin{equation}\nonumber
\begin{split}
	B=&2(a^2+b^2)\sum_{j,k=1}^n(Aa^{jk}{\ell}_j)_k
   -aA_t
   -2aA\rmm{Re}\Phi
   -2bA\rmm{Im}\Phi
   \\
   &+2\rmm{Re}(\Phi{\ell}_t)
   -2|\Phi|^2
   +{\ell}_{tt}\\
   =&2(a^2+b^2)\sum_{j,k=1}^n A_{k}a^{jk}\ell_{j}
   +2(a^2+b^2)A\sum_{j,k=1}^n (a^{jk}\ell_{j})_{k}\\
\end{split}
\end{equation}
\begin{equation}\nonumber
\begin{split}
   &-4a^2A\sum_{j,k=1}^n(a^{jk}\ell_{j})_{k}
   -4b^2A\sum_{j,k=1}^n(a^{jk}\ell_{j})_{k}\\
   &-8(a^2+b^2)\bigg[\sum_{j,k=1}^n(a^{jk}\ell_{j})_{k}\bigg]^2
   -aA_t
   +4a\sum_{j,k=1}^n(a^{jk}\ell_{j})_{k}\ell_t
   +\ell_{tt}\\
   =&2(a^2+b^2)\bigg[\sum_{j,k=1}^n A_{k}a^{jk}\ell_{j}
   -A\sum_{j,k=1}^n(a^{jk}\ell_{j})_{k}\bigg]\\
   &-8(a^2+b^2)\bigg[\sum_{j,k=1}^n(a^{jk}\ell_{j})_{k}\bigg]^2
   -aA_t
   +4a\sum_{j,k=1}^n(a^{jk}\ell_{j})_{k}\ell_t
   +\ell_{tt}
   =:\sum_{j=1}^5I_{j}
\end{split}	
\end{equation}

Next, we estimate $I_j\ (j=1,\dots,5)$. Using (\ref{9}), we have
\begin{equation}\label{11}
A_k=2\lambda^2\mu^3\xi^2\beta_k\Psi
+{\mathcal O}(\lambda^2)\mu^2\xi^2
+{\mathcal O}(\lambda)\mu^3\xi.
\end{equation}
Further, using (\ref{11}) and (\ref{9}), we obtain respectively that
\begin{equation}\label{12}
\begin{split}
	\sum_{j,k=1}^nA_{k}a^{jk}\ell_{j}
	=2\lambda^3\mu^4\xi^3\Psi^2
	+{\mathcal O}(\lambda^3)\mu^3\xi^3
    +{\mathcal O}(\lambda^2)\mu^4\xi^2
\end{split}
\end{equation}
and
\begin{equation}\label{13}
    A\sum_{j,k=1}^n(a^{jk}\ell_j)_k	
    =\lambda^3\mu^4\xi^3\Psi^2
    +{\mathcal O}(\lambda^3)\mu^3\xi^3
    +{\mathcal O}(\lambda^2)\mu^4\xi^2.
\end{equation}
By (\ref{12}) and (\ref{13}), $I_1$ simplifies to
\begin{equation}\nonumber
\begin{split}
&I_1
   =2(a^2+b^2)\lambda^3\mu^4\xi^3\Psi^2
   +{\mathcal O}(\lambda^3)\mu^3\xi^3
    +{\mathcal O}(\lambda^2)\mu^4\xi^2,
\end{split}
\end{equation}
which imply that
\begin{equation}\label{22}
\begin{split}
\EE\int_{Q} I_1 |v|^2\dxt \geq &
2(a^2+b^2)\EE\int_{Q}  \lambda^3\mu^4\xi^3 |\Psi|^2 |v|^2 \dxt \\
&-C\EE\int_{Q} (\lambda^3\mu^3\xi^3+\lambda^2\mu^4\xi^2)|v|^2\dxt .
\end{split}
\end{equation}

For $I_2$, we easily have
\begin{equation}\nonumber
	I_2
    ={\mathcal O}(\lambda^2)\mu^4\xi^2,
\end{equation}
which imply that
\begin{equation}\label{23}
	\EE\int_{Q} I_2 |v|^2\dxt \geq
	-C\EE\int_{Q} \lambda^2\mu^4\xi^2|v|^2\dxt.
\end{equation}

Further, we estimate the third and the fourth terms together. Using (\ref{9}), we have
\begin{equation}\label{14}
\begin{split}
	A_t=&\sum_{j,k=1}^n(\lambda^2\mu^2\xi^2 a^{jk}\beta_j\beta_k
-\lambda\mu^2\xi a^{jk}\beta_j\beta_k
-\lambda\mu\xi a^{jk}_{k}\beta_{j}
-\lambda\mu\xi a^{jk} \beta_{jk})_t\\
=&2\lambda^2\mu^2\xi\xi_t\Psi
-\lambda\mu^2\xi_t\Psi
+{\mathcal O}(\lambda)\mu\xi_t
+{\mathcal O}(\lambda^2)\mu^2\xi^2\\
=&\frac{\gamma_t}{\gamma}\big[2\lambda^2\mu^2\xi^2\Psi
+{\mathcal O}(\lambda)\mu^2\xi\big]
+{\mathcal O}(\lambda^2)\mu^2\xi^2.
\end{split}
\end{equation}
In addition, we get
\begin{equation}\label{15}
\begin{split}
	\sum_{j,k=1}^n(a^{jk}\ell_{j})_{k}\ell_t
	=&\lambda^2\mu^2\xi\gamma_t\alpha\Psi
	+{\mathcal O}(\lambda^2)\mu\xi\gamma_t\alpha\\
	=&\frac{\gamma_t}{\gamma}\big[\lambda^2\mu^2\xi\varphi\Psi
	+{\mathcal O}(\lambda^2)\mu\xi\varphi\big].
\end{split}
\end{equation}
By (\ref{14}) and (\ref{15}), we obtain that
\begin{equation}\label{17}
\begin{split}
	I_3+I_4
   =W
+{\mathcal O}(\lambda^2)\mu^2\xi^2,
\end{split}	
\end{equation}
where
\begin{equation}\nonumber
\begin{split}
	W=-\frac{\gamma_t}{\gamma}\big[2a\lambda^2\mu^2\xi^2\Psi
+{\mathcal O}(\lambda)\mu^2\xi\big]
-\frac{\gamma_t}{\gamma}\big[4a\lambda^2\mu^2\xi(-\varphi)\Psi
	+{\mathcal O}(\lambda^2)\mu\xi(-\varphi)\big].
\end{split}	
\end{equation}

Further, we estimate the $W$. From the definition of $\gamma$, it is clear that $W$ vanishes on $(T/4,T/2)$. On $(T/2,T)$, we use the fact that there exists $C>0$ such that $|\gamma_t|\leq C|\gamma|^2$. Therefore, there exists a constant $C>0$ only depending on $G, G'$ such that
\begin{equation}\label{18}
|W|\leq C\lambda^2\mu^3\xi^3, \quad (t,x)\in (T/2,T)\times G ,	
\end{equation}
where we have used that $|\varphi\gamma|\leq \mu\xi^2$.
On $(0,T/4)$, noting the facts that $\mu\xi\leq|\varphi|$, $\gamma_t\leq 0$, $\varphi\leq 0$ and $\gamma\in[1,2]$, it holds that
\begin{equation}\label{19}
\begin{split}
	W\geq\ &4a\frac{|\gamma_t|}{\gamma}\lambda^2\mu^2\xi |\varphi|\Psi
	-C\frac{|\gamma_t|}{\gamma}\lambda^2\mu\xi|\varphi|,
	\quad (t,x)\in (0,T/4)\times G.
\end{split}
\end{equation}
By (\ref{17}), (\ref{18}) and (\ref{19}), we get
\begin{equation}\label{24}
\begin{split}
&\EE\int_{Q} (I_3+I_4) |v|^2\dxt
\geq \EE\int_{0}^{T/4}\int_{G} 4a\frac{|\gamma_t|}{\gamma}\lambda^2\mu^2\xi\Psi |\varphi||v|^2\dxt\\
&-C\EE\int_{0}^{T/4}\int_{G}\frac{|\gamma_t|}{\gamma}(\lambda\mu^2+\lambda^2\mu)\xi|\varphi||v|^2\dxt
-C\EE\int_{Q}\lambda^2\mu^3\xi^3 |v|^2\dxt .
\end{split}
\end{equation}

Further, we estimate $I_5$. Using the definition of $\gamma(t)$, it is not difficult to see that $|\gamma_{tt}|\leq C \lambda^2\mu^4e^{2\mu(6m-4)}$, thus
\begin{equation}\label{10}
|I_{5}|=|\ell_{tt}|=|\lambda\gamma_{tt}\alpha |\leq
C\lambda^3\mu^5 e^{2\mu(6m-4)} e^{\mu(6m+6)}
\leq C\lambda^3\mu^2 \xi^3,\quad t\in (0,T/4),
\end{equation}
where we have used that $\mu^3e^{-2\mu}<\frac{1}{2}$ for all $\mu>1$.
Noting that estimates $|\gamma_{tt}|\leq C\gamma^3$ and $|\varphi\gamma|\leq \mu\xi^2$, we obtain
\begin{equation}\label{21}
|I_{5}|=|\ell_{tt}|=\bigg|\lambda\frac{\gamma_{tt}}{\gamma}\varphi\bigg|
\leq C \lambda\mu\xi^3,\quad t\in (T/2,T).
\end{equation}
Since $\ell_{tt}$ vanishes for $t\in (T/4,T/2)$, from (\ref{10}) and (\ref{21}), we deduce that
\begin{equation}\label{25}
\EE\int_{Q} I_5|v|^2 \dxt \geq
-C\EE\int_{Q} \lambda^3\mu^2 \xi^3|v|^2\dxt.
\end{equation}

By (\ref{22}), (\ref{23}), (\ref{24}) and (\ref{25}), we obtain that
\begin{equation}\label{40}
\begin{split}
	\mathcal{J}_{2}:=&\EE\int_{Q}B|v|^2\dxt
	=\sum_{j=1}^5\EE\int_{Q}I_{j}|v|^2\dxt
\\
\geq &
	\EE\int_{0}^{T/4}\int_{G} 4a\frac{|\gamma_t|}{\gamma}\lambda^2\mu^2\xi\Psi |\varphi||v|^2\dxt
	\\
&-C\EE\int_{0}^{T/4}\int_{G}\frac{|\gamma_t|}{\gamma}(\lambda\mu^2+\lambda^2\mu)\xi|\varphi||v|^2\dxt
\\
&+2(a^2+b^2)\EE\int_{Q}\lambda^3\mu^4\xi^3 |\Psi|^2 |v|^2 \dxt
-C\EE\int_{Q} (\lambda^3\mu^3\xi^3+\lambda^2\mu^4\xi^2)|v|^2\dxt.
\end{split}
\end{equation}

{\it  Estimate for $\mathcal{J}_{3}$.}
By the definition of $D^{jk}$ in Lemma \ref{identity} and (\ref{Phi}), we find that
\begin{equation}\nonumber
\begin{split}
D^{jk}=&aa^{jk}_{t}
        +2b\rmm{Im}\Phi a^{jk}
        +2a\rmm{Re}\Phi a^{jk}\\
      &+2(a^2+b^2)\sum_{j',k'=1}^n[a^{jk'}(a^{j'k}{\ell}_{j'})_{k'}+a^{k'k}(a^{j'j}{\ell}_{j'})_{k'}-(a^{jk}a^{j'k'}{\ell}_{j'})_{k'}]\\
      =& 2(a^2+b^2)a^{jk}\lambda\mu^2\xi\Psi
      + 4(a^2+b^2)\lambda\mu^2\xi\sum_{j',k'=1}^{n}a^{jk'}a^{j'k}\beta_{j'}\beta_{k'}
      +{\mathcal O}(\lambda)\mu\xi.
\end{split}
\end{equation}
Hence, it holds that
\begin{equation}\label{41}
\begin{split}
	\mathcal{J}_{3}:=&\EE\int_{Q}\sum_{j,k=1}^nD^{jk}v_{j}\barr{v}_{k}\dxt
	\geq
	2(a^2+b^2)\EE\int_{Q}\lambda\mu^2\xi\Psi \sum_{j,k=1}^na^{jk}v_{j}\barr{v}_{k}\dxt\\
	&+4(a^2+b^2)\EE\int_{Q}\lambda\mu^2\xi\bigg|\sum_{j,k=1}^na^{jk}\beta_{j}v_{k}\bigg|^{2}\dxt
	-C\EE\int_{Q}\lambda\mu\xi|\nabla v|^2\dxt .
\end{split}	
\end{equation}

{\it  Estimate for $\mathcal{J}_{4}$.}
First, notice that for any $j=1,2,\dots,n$,
\begin{equation}\nonumber
	\rmm{Im}(bF^jv\barr{v}_j)=\rmm{Re}(\rmm{i}b\barr{F^j}\barr{v}{v}_j).
\end{equation}
By the definitions of $E^{j}$ and $F^{j}$ in Lemma \ref{identity} and (\ref{Phi}), we find that
\begin{equation}\label{29}
\begin{split}
	&-2\sum_{j=1}^n[\rmm{Re}(aE^j\barr{v}v_j)-\rmm{Im}(bF^jv\barr{v}_j)]
	=-2\sum_{j=1}^n\rmm{Re}[(aE^j-\rmm{i}b\barr{F^j})\barr{v}v_j]\\
	=&-4\sum_{j,k=1}^na^{jk}\ell_{k}\rmm{Re}[(a+\rmm{i}b)\barr{\Phi}\barr{v}v_{j}]
	+2\sum_{j,k=1}^n a^{jk}\rmm{Re}[(a+\rmm{i}b)\barr{\Phi}_k\barr{v}v_{j}]\\
	&+4\sum_{j,k=1}^n a a^{jk} \ell_{k}\ell_{t}\rmm{Re}(\barr{v}v_{j})
	-2\sum_{j,k=1}^n\rmm{Re}[\rmm{i}b(2a^{jk} \ell_{kt}+a_{t}^{jk}\ell_{k})\barr{v}v_{j}]\\
	=&-8(a^2+b^2)\sum_{j,k,j',k'=1}^n a^{jk}\ell_{k}(a^{j'k'}\ell_{j'})_{k'}\rmm{Re}(\barr{v}v_{j})\\
	&+4\sum_{j,k=1}^n aa^{jk}\ell_{k}\ell_{t}\rmm{Re}(\barr{v}v_{j})
	-4b\sum_{j,k=1}^na^{jk}\ell_{kt}\rmm{Re}(\rmm{i}\barr{v}v_{j})
	+\mathcal{O}(\lambda)\mu^3\xi |v||\nabla v|\\
\end{split}
\end{equation}
\begin{equation}\nonumber
\begin{split}
	=&-4(a^2+b^2)\sum_{j,k,j',k'=1}^n \bigg\{
	[a^{jk}\ell_{k}(a^{j'k'}\ell_{j'})_{k'}|v|^2]_{j}
	-[a^{jk}\ell_{k}(a^{j'k'}\ell_{j'})_{k'}]_{j}|v|^2\bigg\}\\
	&+2\sum_{j,k=1}^n(aa^{jk}\ell_{k}\ell_{t}|v|^2)_{j}
	-2\sum_{j,k=1}^n (aa^{jk}\ell_{k}\ell_{t})_{j}|v|^2
	-4b\sum_{j,k=1}^na^{jk}\ell_{kt}\rmm{Re}(\rmm{i}\barr{v}v_{j})\\
	&+\mathcal{O}(\lambda)\mu^3\xi |v||\nabla v|\\
	=&-2\sum_{j,k=1}^n\bigg[2(a^2+b^2)\sum_{j',k'=1}^na^{jk}\ell_{k}(a^{j'k'}\ell_{j'})_{k'}|v|^2
	-aa^{jk}\ell_{k}\ell_{t}|v|^2\bigg]_{j}\\
	&+\mathcal{O}(\lambda^2)\mu^4\xi^2 |v|^2
	+\mathcal{O}(\lambda)\mu^3\xi |v||\nabla v|\\
	&-2\sum_{j,k=1}^n (aa^{jk}\ell_{k}\ell_{t})_{j}|v|^2
	-4b\sum_{j,k=1}^na^{jk}\ell_{kt}\rmm{Re}(\rmm{i}\barr{v}v_{j}).
\end{split}	
\end{equation}

Further, we estimate the last two terms of the above expression. First, we have
\begin{equation}\nonumber
	-2\sum_{j,k=1}^n(aa^{jk}\ell_{k}\ell_{t})_{j}
	=-2a\frac{\gamma_{t}}{\gamma} \lambda^2\mu^2\xi\Psi\varphi
	-2a\frac{\gamma_{t}}{\gamma}\lambda^2\mu^2\xi^2\Psi.
\end{equation}
On $(0,T/4)$, since the second term of the above expression is positive, it holds that
\begin{equation}\label{20}
	-2\sum_{j,k=1}^n(aa^{jk}\ell_{k}\ell_{t})_{j}\geq
	-2a\frac{|\gamma_{t}|}{\gamma} \lambda^2\mu^2\xi\Psi|\varphi|,\quad (t,x)\in (0,T/4)\times G.
\end{equation}
On $(T/2,T)$, noting $|\gamma_{t}|\leq C|\gamma|^2$ and $|\gamma\varphi|\leq \mu \xi^2$, we have
\begin{equation}\label{26}
	\bigg|-2\sum_{j,k=1}^n(aa^{jk}\ell_{k}\ell_{t})_{j}\bigg|\leq
	C\lambda^2\mu^3\xi^3, \quad
	(t,x)\in (T/2,T)\times G.
\end{equation}
Since obviously it vanishes for $t\in (T/4,T/2)$, we can put estimates (\ref{20}) and (\ref{26}) together to deduce that
\begin{equation}\label{30}
\begin{split}
	\intt -2\sum_{j,k=1}^n(aa^{jk}\ell_{k}\ell_{t})_{j}|v|^2\dt
	\geq &\int_0^{T/4} -2a\frac{|\gamma_{t}|}{\gamma} \lambda^2\mu^2\xi\Psi|\varphi||v|^2\dt \\
	&-C \intt \lambda^2\mu^3\xi^3 |v|^2\dt .
\end{split}
\end{equation}

Next, we have
\begin{equation}\nonumber
	-4b\sum_{j,k=1}^na^{jk}\ell_{kt}\rmm{Re}(\rmm{i}\barr{v}v_{j})
	=-4b\frac{|\gamma_{t}|}{\gamma} \lambda\mu\xi\rmm{Re}\bigg[\rmm{i}\bigg(\sum_{j,k=1}^na^{jk}\beta_{k}v_{j}\bigg)\barr{v}\bigg].
\end{equation}
On $(T/2,T)$, due to $|\gamma_t|\leq C\gamma^2$, we obtain
\begin{equation}\label{27}
	\bigg|-4b\sum_{j,k=1}^na^{jk}\ell_{kt}\rmm{Re}(\rmm{i}\barr{v}v_{j})\bigg|
	\leq C\lambda\mu\xi^2\bigg|\sum_{j,k=1}^na^{jk}\beta_{k}v_{j}\bigg||v|.
\end{equation}
On $(0,T/4)$, by (\ref{16}), it holds that
\begin{equation}\label{28}
\begin{split}
	\bigg|-4b\sum_{j,k=1}^na^{jk}\ell_{kt}\rmm{Re}(\rmm{i}\barr{v}v_{j})\bigg|
	\leq &C\lambda^2\mu^3\xi e^{\mu(6m-4)}\bigg|\sum_{j,k=1}^na^{jk}\beta_{k}v_{j}\bigg||v|\\
	\leq &C\lambda^2\xi^2\bigg|\sum_{j,k=1}^na^{jk}\beta_{k}v_{j}\bigg||v|,
\end{split}
\end{equation}
where we also have used $\mu^3e^{-2\mu}\leq \frac{1}{2}$ for all $\mu>1$.
Similarly, it also vanishes for $t\in (T/4,T/2)$, using (\ref{27}) and (\ref{28}), we have
\begin{equation}\label{31}
\intt 	-4b\sum_{j,k=1}^na^{jk}\ell_{kt}\rmm{Re}(\rmm{i}\barr{v}v_{j})\dt
\geq -C\intt \lambda^2\mu\xi^2\bigg|\sum_{j,k=1}^na^{jk}\beta_{k}v_{j}\bigg||v|\dt .
\end{equation}

By (\ref{29}), (\ref{30}), (\ref{31}) and $v=0$ on $\Sigma$, we have
\begin{equation}\label{42}
\begin{split}
\mathcal{J}_{4}:=&\EE\int_{Q}-2\sum_{j=1}^n[\rmm{Re}(aE^j\barr{v}v_j)-\rmm{Im}(bF^jv\barr{v}_j)]	\dxt \\
\geq
	&-\EE\int_0^{T/4}\int_{G} 2a\frac{|\gamma_{t}|}{\gamma} \lambda^2\mu^2\xi\Psi|\varphi||v|^2\dxt \\
	&-C\EE\int_{Q}(\lambda^2\mu^4\xi^2
	+\lambda^2\mu^3\xi^3
	+\lambda^3\mu^2\xi^3) |v|^2\dxt \\
	&-C\EE\int_{Q}\mu^2 |\nabla v|^2 \dxt
	-C\EE\int_{Q} \lambda\xi\bigg|\sum_{j,k=1}^na^{jk}\beta_{k}v_{j}\bigg|^2\dxt.
\end{split}
\end{equation}

{\it  Estimate for $\mathcal{J}_{5}$.}
Noting that $v=0$ on $\Sigma$ and $\beta=0$ on $\Sigma$, for any $j=1,\dots,n$, we obtain
\begin{equation}\nonumber
	v_j=\frac{\partial v}{\partial \nu}\nu_{j},\quad
	\beta_j=\frac{\partial \beta}{\partial \nu}\nu_{j},
\end{equation}
where $\nu=(\nu_1,\dots,\nu_n)$ denotes the unit outer normal vector on $\Gamma$.
By the definition of $V^k$ in Lemma \ref{identity}, for any $k=1,\dots,n$, it holds that

\begin{equation}\nonumber
\begin{split}
V^k|_{\Sigma}=&	2(a^2+b^2)\sum_{j,j',k'=1}^n[a^{jk}a^{j'k'}{\ell}_{j}v_{j'}\barr{v}_{k'}-a^{jk'}a^{j'k}{\ell}_{j}(v_{j'}\barr{v}_{k'}+\barr{v}_{j'}v_{k'})]\dt\bigg|_{\Sigma}
\\
	=&-2(a^2+b^2)\sum_{j,j',k'=1}^n\lambda\mu\xi\frac{\partial \beta}{\partial \nu}\bigg|\frac{\partial v}{\partial \nu}\bigg|^2a^{jk}a^{j'k'}\nu_{j}\nu_{j'}\nu_{k'}\dt \bigg|_{\Sigma}.
\end{split}
\end{equation}
Therefore, we find that
\begin{equation}\label{43}
\begin{split}
	\mathcal{J}_{5}:=&\EE\int_{Q}\sum_{k=1}^nV_{k}^k\dx
	=\EE\int_{\Sigma} \sum_{k=1}^n V^{k} \nu_{k}\dd \sigma\\
	=&-2(a^2+b^2)\EE\int_{\Sigma}\lambda\mu\xi\frac{\partial \beta}{\partial \nu}\bigg|\frac{\partial v}{\partial \nu}\bigg|^2\sum_{j,j',k',k=1}^na^{jk}a^{j'k'}\nu_{j}\nu_{j'}\nu_{k'}\nu_{k}\dd \sigma
	\geq 0.
\end{split}
\end{equation}

{\it Estimate for $\mathcal{J}_{6},\mathcal{J}_{7}$ and $\mathcal{J}_{8}$.}
By the first equation of (\ref{equationz}), condition $(H_3)$ and the fact $v=\theta z$, we have
\begin{equation}\nonumber
\begin{split}
	\mathcal{J}_{6}:=&
	\EE\int_{Q}\sum_{j,k=1}^naa^{jk}\dd v_j\dd \barr{v}_k\dx
	\\
 =&\EE\int_{Q}\sum_{j,k=1}^naa^{jk}\theta^2
 \dd(\ell_{j}z+z_{j}) \dd(\ell_{k}\barr{z}+\barr{z}_{k}) \dx \\
 \geq &as_0\EE\int_{Q}\theta^2|\nabla Z+\nabla\ell Z|^2 \dxt \\
 \geq &\frac{1}{2}as_0\EE\int_{Q}\theta^2|\nabla Z|^2\dxt
 -C\EE\int_{Q}\lambda^2\mu^2\xi^2\theta^2|Z|^2\dxt .
\end{split}
\end{equation}
Similar to the previous analysis of $\ell_{t}$ and (\ref{9}), we have
\begin{equation}\nonumber
\begin{split}
	|\mathcal{J}_{7}|=
	\bigg|\EE\int_{Q}({\ell}_t-aA)\dd v\dd \barr{v}\dx \bigg|
	\leq & \EE\int_{Q}\bigg|\frac{\gamma_{t}}{\gamma} \lambda\varphi-aA \bigg|\theta^2|Z|^2\dxt\\
	\leq & C \EE\int_{Q}\lambda^2\mu^2\xi^3\theta^2|Z|^2\dxt  .
\end{split}
\end{equation}
Further, we have
\begin{equation}\nonumber
\begin{split}
\mathcal{J}_{8}:=&
-2b\EE\int_{Q}	\sum_{j,k=1}^n a^{jk}\ell_{k}\rmm{Im}(\dd v\dd\barr{v}_{j})\dx
\\
=&-2b\EE\int_{Q} \sum_{j,k=1}^n a^{jk}\beta_{k} \lambda\mu\xi \theta^2 \rmm{Im}[\dd z\dd (\ell_{j}\barr{z}+\barr{z}_{j})]\dx\\
\geq &-C\EE\int_{Q}\theta^2(\lambda^2\mu^2\xi^2|Z|^2+\lambda\mu\xi|Z||\nabla Z|)\dxt \\
\geq & -\frac{1}{2}as_0\EE\int_{Q}\theta^2|\nabla Z|^2\dxt
-C\EE\int_{Q}\theta^2\lambda^2\mu^2\xi^2|Z|^2\dxt .
\end{split}
\end{equation}

Therefore, it holds that
\begin{equation}\label{44}
\begin{split}
	\mathcal{J}_{6}
	+\mathcal{J}_{7}
	+\mathcal{J}_{8}
 \geq -C \EE\int_{Q}\lambda^2\mu^2\xi^3\theta^2|Z|^2\dxt.
\end{split}	
\end{equation}

{\it Estimate for $\mathcal{J}_{9}$.}
First, noting that the fact $\rmm{Im}(c)=\rmm{Re}(\rmm{i}\barr{c})$ for any $c\in \mathbb{C}$, we obtain that
\begin{equation}\label{33}
\begin{split}
&2b\sum_{j,k=1}^n(a^{jk}{\ell}_k)_j\rmm{Im}(v\dd \barr{v})+2\rmm{Re}(\barr{\Phi}\barr{v}\dd v)\\
=&2\sum_{j,k=1}^n(a^{jk}{\ell}_k)_j\rmm{Re}[(2a-\rmm{i}b)\barr{v}\dd v]\\
=&2a\sum_{j,k=1}^n(a^{jk}{\ell}_k)_j(v\dd \barr{v}+\barr{v}\dd v)
+\rmm{i}b\sum_{j,k=1}^n(a^{jk}{\ell}_k)_j(v\dd \barr{v}-\barr{v}\dd v)\\
\end{split}
\end{equation}
\begin{equation}\nonumber
\begin{split}
=&2a\dd \bigg[\sum_{j,k=1}^n(a^{jk}{\ell}_k)_j |v|^2\bigg]
-2a\sum_{j,k=1}^n(a^{jk}{\ell}_k)_{jt}|v|^2\dt
-2a\sum_{j,k=1}^n(a^{jk}{\ell}_k)_j\dd v\dd \barr{v} \\
&+\rmm{i}b\sum_{j,k=1}^n(a^{jk}{\ell}_k)_j(v\dd \barr{v}-\barr{v}\dd v).
\end{split}
\end{equation}

It is easy to check that
\begin{equation}\label{34}
\begin{split}
	2a\sum_{j,k=1}^n(a^{jk}{\ell}_k)_j |v|^2\bigg|_0^T
	=&\bigg[-4a\lambda\mu^2e^{\mu (6m+\beta)}\sum_{j,k=1}^na^{jk}\beta_{j}\beta_{k}+\mathcal{O}(\lambda)\mu e^{\mu (6m+\beta)}\bigg] |v(0)|^2\\
	\geq &-C\lambda\mu^2 e^{\mu (6m+\beta)}|v(0)|^2.
\end{split}
\end{equation}

Further, we have
\begin{equation}\label{32}
\begin{split}
	-2a\sum_{j,k=1}^n(a^{jk}{\ell}_k)_{jt}
	=-2a\frac{\gamma_{t}}{\gamma}[\Psi\lambda\mu^2\xi+\mathcal{O}(\lambda)\mu\xi]+\mathcal{O}(\lambda)\mu^2\xi.
\end{split}
\end{equation}
Combining with (\ref{32}), similar to the analysis for $W$ in Estimate for $\mathcal{J}_{2}$, we obtain
\begin{equation}\label{35}
\begin{split}
&-2a\EE\int_{Q} \sum_{j,k=1}^n(a^{jk}{\ell}_k)_{jt}|v|^2\dxt\\
\geq
&-C\EE\int_{0}^{T/4}\int_{G}\frac{|\gamma_t|}{\gamma}\lambda\mu^2\xi |\varphi||v|^2\dxt
-C\EE\int_{Q}\lambda\mu^2\xi^2 |v|^2\dxt.
\end{split}
\end{equation}

Further, we have
\begin{equation}\label{36}
\begin{split}
	-2a\sum_{j,k=1}^n(a^{jk}{\ell}_k)_j\dd v\dd \barr{v}
	=\mathcal{O}(\lambda)\mu^2\xi \theta^2|Z|^2\dt .
\end{split}	
\end{equation}

Integtating (\ref{33}) on $Q$, taking mathematical expectation on both sides and noting (\ref{34}), (\ref{35}) and (\ref{36}), we conclude that
\begin{equation}\label{37}
\begin{split}
\mathcal{J}_{9}:=
&\EE \int_{Q}2b\sum_{j,k=1}^n(a^{jk}{\ell}_k)_j\rmm{Im}(v\dd \barr{v})+2\rmm{Re}(\barr{\Phi}\barr{v}\dd v)\dx\\
\geq &-C\EE \int_{G} \lambda\mu^2 e^{\mu (6m+\beta)}|v(0)|^2\dx
-C\EE\int_{0}^{T/4}\int_{G}\frac{|\gamma_t|}{\gamma}\lambda\mu^2\xi |\varphi||v|^2\dxt \\
&-C\EE\int_{Q}\lambda\mu^2\xi^2 |v|^2\dxt
-C\EE\int_{Q}\lambda\mu^2\xi \theta^2|Z|^2\dxt \\
&+\EE \int_{Q}\rmm{i}b\sum_{j,k=1}^n(a^{jk}{\ell}_k)_j(v\dd \barr{v}-\barr{v}\dd v)\dx .
\end{split}
\end{equation}

Next, we estimate the last term of (\ref{37}). By the same analysis as (2.23) and (2.24) in \cite{Fu2017Controllability}, replacing $\psi$, $\varphi$, $f$, $z$, $y$ and $Y$ in \cite{Fu2017Controllability} by $\beta$, $\xi$, $\Xi $, $v$, $z$ and $Z$ respectively, we obtain that
\begin{equation}\nonumber
\begin{split}
	&\EE \int_{Q}\rmm{i}b\sum_{j,k=1}^n(a^{jk}{\ell}_k)_j(v\dd \barr{v}-\barr{v}\dd v)\dx\\
	\geq &-C\EE \int_{Q}\theta^2|\Xi|^2\dxt
	-C|ab|\EE \int_{Q}\lambda\mu^3\xi|v||\nabla v|\dxt \\
	&+(-a^2+b^2)\EE \int_{Q}\lambda^3\mu^4\xi^3|\Psi|^2|v|^2\dxt
	-Cb^2 \EE \int_{Q} (\lambda^2\mu^4\xi^2+\lambda^3\mu^3\xi^3)|v|^2\dxt \\
	&-2b^2\EE \int_{Q} \lambda\mu^2\xi\Psi \sum_{j,k=1}^n a^{jk}v_{k} \barr{v}_{j}\dxt
	-(a^2+b^2)\EE \int_{Q} \lambda\mu^2\xi \bigg|\sum_{j,k=1}^n a^{jk}\beta_{j}v_{k} \bigg|^2 \dxt \\
	&-C(a^2+b^2)\EE \int_{Q} [\lambda\mu\xi|\nabla v|^2+\lambda^3\mu^3\xi^3|v|^2]\dxt,
\end{split}	
\end{equation}
which imply that
\begin{equation}\label{38}
\begin{split}
	&\EE \int_{Q}\rmm{i}b\sum_{j,k=1}^n(a^{jk}{\ell}_k)_j(v\dd \barr{v}-\barr{v}\dd v)\dx\\
	\geq &-C\EE \int_{Q}\theta^2|\Xi|^2\dxt
	+(-a^2+b^2)\EE \int_{Q}\lambda^3\mu^4\xi^3|\Psi|^2|v|^2\dxt  \\
	&-2b^2\EE \int_{Q} \lambda\mu^2\xi\Psi \sum_{j,k=1}^n a^{jk}v_{k} \barr{v}_{j}\dxt
	-(a^2+b^2)\EE \int_{Q} \lambda\mu^2\xi \bigg|\sum_{j,k=1}^n a^{jk}\beta_{j}v_{k} \bigg|^2 \dxt \\
	&-C \EE \int_{Q} (\lambda^2\mu^4\xi^2+\lambda^3\mu^3\xi^3)|v|^2\dxt
	-C\EE \int_{Q} (\lambda\mu\xi+\mu^2)|\nabla v|^2\dxt.
\end{split}	
\end{equation}
By (\ref{37}) and (\ref{38}), we have
\begin{equation}\label{45}
\begin{split}
	\mathcal{J}_{9}
\geq &-C\EE \int_{G} \lambda\mu^2 e^{\mu (6m+\beta)}|v(0)|^2\dx
-C\EE\int_{0}^{T/4}\int_{G}\frac{|\gamma_t|}{\gamma}\lambda\mu^2\xi |\varphi||v|^2\dxt \\
&-C\EE\int_{Q}\lambda\mu^2\xi \theta^2|Z|^2\dxt
-C\EE \int_{Q}\theta^2|\Xi|^2\dxt \\
	&+(-a^2+b^2)\EE \int_{Q}\lambda^3\mu^4\xi^3|\Psi|^2|v|^2\dxt  \\
	&-2b^2\EE \int_{Q} \lambda\mu^2\xi\Psi \sum_{j,k=1}^n a^{jk}v_{k} \barr{v}_{j}\dxt
	-(a^2+b^2)\EE \int_{Q} \lambda\mu^2\xi \bigg|\sum_{j,k=1}^n a^{jk}\beta_{j}v_{k} \bigg|^2 \dxt \\
	&-C \EE \int_{Q} (\lambda^2\mu^4\xi^2+\lambda^3\mu^3\xi^3)|v|^2\dxt
	-C\EE \int_{Q} (\lambda\mu\xi+\mu^2)|\nabla v|^2\dxt.
\end{split}	
\end{equation}

{\noindent\bf Step 3: Towards the Carleman estimate.}
Combining (\ref{39}), (\ref{40}), (\ref{41}), (\ref{42}), (\ref{43}), (\ref{44}), (\ref{45}) with (\ref{1}),
we obtain that
\begin{equation}\label{46}
\begin{split}
&2\EE\int_{Q}\rmm{Re}({\theta}\barr{I_1}{\mathcal L}z)\dx
\geq 2\EE\int_{Q}|I_1|^2\dxt
 +c_1\EE\int_{G}  \lambda^2\mu^3 e^{2\mu(6m+1)}|v(0)|^2 \dx \\
&\quad
+2a\EE\int_{0}^{T/4}\int_{G} \frac{|\gamma_t|}{\gamma}\lambda^2\mu^2\xi\Psi |\varphi||v|^2\dxt
 +(a^2+3b^2)\EE\int_{Q}  \lambda^3\mu^4\xi^3 |\Psi|^2 |v|^2 \dxt
 \\
 &\quad +2a^2\EE\int_{Q}\lambda\mu^2\xi\Psi \sum_{j,k=1}^na^{jk}v_{j}\barr{v}_{k}\dxt
 +3(a^2+b^2)\EE\int_{Q}\lambda\mu^2\xi\bigg|\sum_{j,k=1}^na^{jk}\beta_{j}v_{k}\bigg|^{2}\dxt
 \\
 &\quad -C\bigg[\EE \int_{G} \lambda\mu^2 e^{\mu (6m+1)}|v(0)|^2\dx
+\EE\int_{0}^{T/4}\int_{G}\frac{|\gamma_t|}{\gamma}(\lambda\mu^2+\lambda^2\mu)\xi|\varphi||v|^2\dxt
\\
&\quad +\EE\int_{Q} (\lambda^3\mu^3\xi^3+\lambda^2\mu^4\xi^2)|v|^2\dxt
+\EE \int_{Q} (\lambda\mu\xi+\mu^2)|\nabla v|^2\dxt
\\
&\quad +\EE\int_{Q} \lambda\xi\bigg|\sum_{j,k=1}^na^{jk}\beta_{j}v_{k}\bigg|^2\dxt
+\EE \int_{Q}\lambda^2\mu^2\xi^3\theta^2|Z|^2\dxt
+\EE \int_{Q}\theta^2|\Xi|^2\dxt\bigg] .
\end{split}
\end{equation}
By (\ref{equationz}), we easily have
\begin{equation}\label{47}
	2\EE\int_{Q}\rmm{Re}({\theta}\barr{I_1}{\mathcal L}z)\dx
	\leq \EE\int_{Q}|I_1|^2\dxt
	+\EE\int_{Q}\theta^2|\Xi|^2\dxt .
\end{equation}
By (\ref{beta}), ($H_3$) and (\ref{47}), we can easily show that (\ref{46}) imply that
\begin{equation}\nonumber
\begin{split}
&\EE\int_{Q}|I_1|^2\dxt
 +c_1\EE\int_{G}  \lambda^2\mu^3 e^{2\mu(6m+1)}|v(0)|^2 \dx
 \\
 &+2as_0\alpha_0^2\EE\int_{0}^{T/4}\int_{G} \frac{|\gamma_t|}{\gamma}\lambda^2\mu^2\xi  |\varphi||v|^2\dxt
 +(a^2+3b^2)s_0^2\alpha_0^4\EE\int_{Q}  \lambda^3\mu^4\xi^3  |v|^2 \dxt
 \\
&+2a^2s_0^2\alpha_0^2\EE\int_{Q}\lambda\mu^2\xi  |\nabla v|^2\dxt
+3(a^2+b^2)\EE\int_{Q}\lambda\mu^2\xi\bigg|\sum_{j,k=1}^na^{jk}\beta_{j}v_{k}\bigg|^{2}\dxt\\
 \leq\
 &C\bigg[\EE \int_{G} \lambda\mu^2 e^{\mu (6m+1)}|v(0)|^2\dx
 +\EE\int_{0}^{T/4}\int_{G}\frac{|\gamma_t|}{\gamma}(\lambda\mu^2+\lambda^2\mu)\xi|\varphi||v|^2\dxt  \\
&+\EE\int_{Q} (\lambda^3\mu^3\xi^3+\lambda^2\mu^4\xi^2)|v|^2\dxt
+\EE \int_{Q} (\lambda\mu\xi+\mu^2)|\nabla v|^2\dxt\\
&+\EE\int_{Q} \lambda\xi\bigg|\sum_{j,k=1}^na^{jk}\beta_{j}v_{k}\bigg|^2\dxt
+\EE \int_{Q}\lambda^2\mu^2\xi^3\theta^2|Z|^2\dxt
+\EE\int_{Q}\theta^2|\Xi|^2\dxt
\\
&+\EE\int_{0}^{T/4}\int_{G'} \frac{|\gamma_t|}{\gamma}\lambda^2\mu^2\xi  |\varphi||v|^2\dxt
+\EE\intt\int_{G'}  \lambda^3\mu^4\xi^3  |v|^2 \dxt
\\
&+\EE\intt\int_{G'}\lambda\mu^2\xi |\nabla v|^2 \dxt
\bigg].
\end{split}
\end{equation}
Hence, there exists a constant $\mu_1>1$ such that for all $\mu>\mu_1$, there exists a constant $\lambda_1=\lambda_1(\mu)$ such that for any $\lambda>\lambda_1$, it holds that
\begin{equation}\nonumber
\begin{split}
&\EE\int_{G}  \lambda^2\mu^3 e^{2\mu(6m+1)}|v(0)|^2 \dx
+\EE\int_{0}^{T/4}\int_{G} \lambda^2\mu^2\xi  |\varphi||\gamma_t||v|^2\dxt\\
&+\EE\int_{Q}  \lambda^3\mu^4\xi^3  |v|^2 \dxt
+\EE\int_{Q}\lambda\mu^2\xi  |\nabla v|^2\dxt\\
\leq\
 &C\bigg[
\EE \int_{Q}\lambda^2\mu^2\xi^3\theta^2|Z|^2\dxt
+\EE\int_{Q}\theta^2|\Xi|^2\dxt
+\EE\int_{0}^{T/4}\int_{G'} \lambda^2\mu^2\xi  |\varphi| |\gamma_t| |v|^2\dxt
\\
&+\EE\intt\int_{G'}  \lambda^3\mu^4\xi^3  |v|^2 \dxt
+\EE\intt\int_{G'}\lambda\mu^2\xi |\nabla v|^2 \dxt
\bigg].
\end{split}
\end{equation}
By the definition of $v$, this implies that
\begin{equation}\label{48}
\begin{split}
&\EE\int_{G} \theta^2(0) \lambda^2\mu^3 e^{2\mu(6m+1)}|z(0)|^2 \dx
+\EE\int_{0}^{T/4}\int_{G}\theta^2 \lambda^2\mu^2\xi  |\varphi||\gamma_t||z|^2\dxt\\
&+\EE\int_{Q} \theta^2 \lambda^3\mu^4\xi^3  |z|^2 \dxt
+\EE\int_{Q}\theta^2\lambda\mu^2\xi  |\nabla z|^2\dxt
\\
\leq\
&C\bigg[
\EE \int_{Q}\lambda^2\mu^2\xi^3\theta^2|Z|^2\dxt
+\EE\int_{Q}\theta^2|\Xi|^2\dxt
+\EE\int_{0}^{T/4}\int_{G'}\theta^2 \lambda^2\mu^2\xi  |\varphi| |\gamma_t| |z|^2\dxt
\\
&+\EE\intt\int_{G'} \theta^2 \lambda^3\mu^4\xi^3  |z|^2 \dxt
+\EE\intt\int_{G'}\theta^2\lambda\mu^2\xi |\nabla z|^2 \dxt
\bigg].
\end{split}
\end{equation}
{\noindent\bf Step 4: Last arrangements and conclusion.}
We choose a real-valued function $\zeta\in C_0^{\infty}(G_0)$ such that $\zeta=1$ in $G'$ and $\frac{|\nabla \zeta|}{\zeta^{1/2}}\in L^{\infty}(G_0)$.
By the It\^o formula, we calculate
\begin{equation}\nonumber
\dd(\xi\theta^2\zeta |z|^2)
=\xi\theta^2\zeta z\dd \barr{z}
+\xi\theta^2\zeta \barr{z}\dd z
+(\xi\theta^2)_{t}\zeta|z|^2\dt
+\xi\theta^2\zeta|\dd z|^2.
\end{equation}
and thus, using the equation (\ref{equationz}) and $\zeta\in C^{\infty}_{0}(G_{0})$, integrating it over $Q$ and taking the mathematical expectation, we have
\begin{equation}\label{59}
\begin{split}
&2a\EE\int_{Q}\xi\theta^2\zeta\sum_{j,k=1}^na^{jk}z_{j}\barr{z}_{k}\dxt
+\EE\int_{Q}(\xi\theta^2)_{t}\zeta|z|^2\dxt
\\
&+\EE\int_{G}\xi(0,x)\theta^2(0,x)\zeta(x)|z(0,x)|^2 \dx
+\EE\int_{Q}\xi\theta^2\zeta|Z|^2\dxt
\\
=& -2\EE\int_{Q}\rmm{Re}[(a-\rmm{i}b)\barr{z}\sum_{j,k=1}^{n}a^{jk}z_{j}(\xi\theta^2\zeta)_{k}]\dxt
-2\EE\int_{Q}\xi\theta^2\zeta \rmm{Re}(z\barr{\Xi})\dxt.
\end{split}	
\end{equation}
We readily see that the last two terms on the left-hand side of (\ref{59}) are positive, so they can be dropped. Also, notice that using the properties of $\zeta$ and $(H_{3})$, the first term gives the local terms containing $|\nabla z|^2$. For the second term on the left-hand side of (\ref{59}), similar to step 2 above, we can analyze it on different time intervals. In summary, multiplying  both sides of (\ref{59}) by $\lambda\mu^2$, by the properties of $\zeta$, Young inequality and simple calculation, we have
\begin{equation}\label{49}
\begin{split}
&\EE\int_{0}^{T/4}\int_{G'}\theta^2 \lambda^2\mu^2\xi  |\varphi| |\gamma_t| |z|^2\dxt
+\EE\intt\int_{G'}\theta^2\lambda\mu^2\xi |\nabla z|^2 \dxt\\
\leq
&C\EE\intt\int_{G_0} \theta^2 \lambda^3\mu^4\xi^3  |z|^2 \dxt
+C\EE\int_{Q}\theta^2|\Xi|^2\dxt
\\
&+C\EE\int_{Q} \theta^2 \lambda^2\mu^4\xi^2  |z|^2 \dxt
+C\EE\int_{Q} \theta^2 \mu^2  |\nabla z|^2 \dxt.
\end{split}
\end{equation}
Combining (\ref{48}) with (\ref{49}), we obtain
\begin{equation}\label{60}
\begin{split}
&\EE\int_{G} \theta^2(0) \lambda^2\mu^3 e^{2\mu(6m+1)}|z(0)|^2 \dx
+\EE\int_{0}^{T/4}\int_{G}\theta^2 \lambda^2\mu^2\xi  |\varphi||\gamma_t||z|^2\dxt
\\
&+\EE\int_{Q} \theta^2 \lambda^3\mu^4\xi^3  |z|^2 \dxt
+\EE\int_{Q}\theta^2\lambda\mu^2\xi  |\nabla z|^2\dxt\\
\leq
&C\bigg[
\EE \int_{Q}\lambda^2\mu^2\xi^3\theta^2|Z|^2\dxt
+\EE\int_{Q}\theta^2|\Xi|^2\dxt
+\EE\intt\int_{G_{0}} \theta^2 \lambda^3\mu^4\xi^3  |z|^2 \dxt
\bigg]
\\
&+C\EE\int_{Q} \theta^2 \lambda^2\mu^4\xi^2  |z|^2 \dxt
+C\EE\int_{Q} \theta^2 \mu^2  |\nabla z|^2 \dxt.
\end{split}
\end{equation}
Finally, from (\ref{60}), we can see that there exist two constants $\mu_{0}>\mu_{2}$, $\lambda_{0}>\lambda_{1}$, such that (\ref{carest1}) holds for all $\mu\geq\mu_{0}$, $\lambda\geq\lambda_{0}$.
Thus, the proof of Theorem \ref{carle1} is complete.
\end{proof}

\subsection{The proof of Theorem \ref{control}}
In this section, we will give the proof of Theorem \ref{control}. Applying Theorem \ref{carle1}, we first  establish a controllability result for a linear forward stochastic complex Ginzburg-Landau equation with one source term and two controls, that is,
\begin{equation}\label{equationy3}
	\left\{
		\begin{aligned}
	&\dd y-(a+\rmm{i} b)\sum_{j,k=1}^{n}(a^{jk}y_j)_k\dt=(F+\chi_{G_0}h)\dt+H\dd B(t) &\textup{in}\ &Q,\\
    &y=0 &\textup{on}\ &\Sigma,\\
    &y(0)=y_0 &\textup{in}\ &G,
            \end{aligned}
    \right.
\end{equation}
where $y=y(t,x)$ denotes the state variable associated to the initial state $y_{0}\in L^2_{\mathcal{F}_{0}}(\Omega;L^2(G;\\\mathbb{C}))$, the control pair $(h,H)\in L^2_{\mathbb{F}}(0,T;L^2(G_{0};\mathbb{C}))\times L^2_{\mathbb{F}}(0,T;L^2(G;\mathbb{C}))$.
Observe that given the aforementioned regularity on the controls and source term, one can easily show that system $(\ref{equationy3})$ admits a unique solution $y\in \mathcal{H}_{T}$ (see e.g., \cite[Theorem 3.24]{Lu2021Mathematical}).

We define the space
\begin{equation}\nonumber
\begin{split}
	\mathscr{S}_{\lambda,\mu}=\bigg\{ F\in L^2_\mathbb{F}(0,T;L^2(G;\mathbb{C})) \bigg|
	\bigg(\EE \int_{Q}\theta^{-2}\lambda^{-3}\mu^{-4}\xi^{-3}|F|^2\dxt\bigg)^{\frac{1}{2}}<+\infty \bigg\},
\end{split}	
\end{equation}
which is a Banach space equipped with the canonical norm denoted by $\|\cdot\|_{\mathscr{S}_{\lambda,\mu}}$.
\begin{theorem}\label{control1}
Assume that $F\in L^2_{\mathbb{F}}(0,T;L^2(G;\mathbb{C}))$. For any $y_0\in L^2_{\mathcal{F}_{0}}(\Omega;L^2(G;\mathbb{C}))$, there is a control pair $(\hat{h},\hat{H})$ such that the associated solution $\hat{y}$ to the controlled  system $(\ref{equationy3})$ satisfies $\hat y(T)=0$ in $G$, $\mathbb{P}$-a.s. Moreover, one can find two positive constants $\lambda_{0}$ and $\mu_{0}$, such that
\begin{equation}\label{control1est}
\begin{split}
&\EE \int_{Q}\theta^{-2}|\hat y|^2\dxt
+\EE \intt\int_{G_0}\theta^{-2}\lambda^{-3}\mu^{-4}\xi^{-3}|\hat h|^2\dxt
\\
&+\EE \int_{Q}\theta^{-2}\lambda^{-2}\mu^{-2}\xi^{-3}|\hat H|^2\dxt
\\
\leq  &
C\bigg(\EE \int_{G}\lambda^{-2}\mu^{-3}e^{-2\mu(6m+1)}\theta^{-2}(0)|y_{0}|^2\dx
+\|F\|^2_{\mathscr{S}_{\lambda,\mu}}\bigg),
\end{split}
\end{equation}
for all $\lambda\geq\lambda_{0}$ and $\mu\geq\mu_{0}$, where  $C>0$ only depends on $G$ and $G_0$.
\end{theorem}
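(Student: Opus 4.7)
The strategy is the Hilbert Uniqueness Method (HUM) combined with the Carleman estimate of Theorem \ref{carle1}. For any $z_T\in L^2_{\mathcal{F}_T}(\Omega;L^2(G;\mathbb{C}))$, let $(z,Z)$ denote the unique solution of $(\ref{equationz})$ with $\Xi=0$ and terminal data $z_T$. First, I would apply It\^o's formula to $y\barr{z}$, where $y$ solves $(\ref{equationy3})$ with controls $(h,H)$, integrate by parts on $G$ (legitimate since $y=z=0$ on $\Sigma$), integrate over $[0,T]$ and take expectation, to see that the requirement $y(T,\cdot)=0$ in $G$, $\mathbb{P}$-a.s.\ is equivalent to the duality identity
\begin{equation}\nonumber
\EE\int_G y_0 \barr{z(0)}\dx + \EE\int_Q F\barr{z}\dxt + \EE\int_0^T\!\!\int_{G_0} h\barr{z}\dxt + \EE\int_Q H\barr{Z}\dxt = 0
\end{equation}
holding for all such $z_T$.

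Next, guided by the weights appearing on the left-hand side of $(\ref{carest1})$, I would introduce the strictly convex functional
\begin{equation}\nonumber
\begin{split}
J(z_T):=&\frac{1}{2}\EE\int_0^T\!\!\int_{G_0}\lambda^3\mu^4\xi^3\theta^2|z|^2\dxt
+\frac{1}{2}\EE\int_Q\lambda^2\mu^2\xi^3\theta^2|Z|^2\dxt\\
&+\rmm{Re}\,\EE\int_G y_0\barr{z(0)}\dx + \rmm{Re}\,\EE\int_Q F\barr{z}\dxt,
\end{split}
\end{equation}
and denote by $Q(z_T)$ its quadratic part. Theorem \ref{carle1} with $\Xi=0$ delivers the observability-type inequality
\begin{equation}\nonumber
\EE\int_G\lambda^2\mu^3 e^{2\mu(6m+1)}\theta^2(0)|z(0)|^2\dx + \EE\int_Q\lambda^3\mu^4\xi^3\theta^2|z|^2\dxt\leq C\,Q(z_T).
\end{equation}
Cauchy-Schwarz then dominates the two linear terms of $J$ by $C(\|y_0\|_{*}+\|F\|_{\mathscr{S}_{\lambda,\mu}})\sqrt{Q(z_T)}$, where $\|y_0\|_*^2:=\EE\int_G\lambda^{-2}\mu^{-3}e^{-2\mu(6m+1)}\theta^{-2}(0)|y_0|^2\dx$, so that $J$ is coercive on the completion of $L^2_{\mathcal{F}_T}(\Omega;L^2(G;\mathbb{C}))$ in the norm $\sqrt{Q(\cdot)}$ and admits a unique minimizer $\hat z_T$ by a direct-method argument. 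The Euler-Lagrange equation at $\hat z_T$ exactly matches the duality identity provided one sets
\begin{equation}\nonumber
\hat h:=\lambda^3\mu^4\xi^3\theta^2\hat z\,\chi_{G_0}, \qquad \hat H:=\lambda^2\mu^2\xi^3\theta^2\hat Z,
\end{equation}
which will produce the desired state $\hat y$ with $\hat y(T,\cdot)=0$ in $G$, $\mathbb{P}$-a.s.

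The control bounds in $(\ref{control1est})$ will then follow from $J(\hat z_T)\leq J(0)=0$ combined with the coercivity bound on $Q(\hat z_T)$, since
\begin{equation}\nonumber
\EE\int_0^T\!\!\int_{G_0}\lambda^{-3}\mu^{-4}\xi^{-3}\theta^{-2}|\hat h|^2\dxt
=\EE\int_0^T\!\!\int_{G_0}\lambda^3\mu^4\xi^3\theta^2|\hat z|^2\dxt,
\end{equation}
and analogously for $\hat H$. The remaining bound $\EE\int_Q\theta^{-2}|\hat y|^2\dxt$ requires a weighted energy estimate for $(\ref{equationy3})$ applied to $\hat y$: I would obtain it by applying It\^o's formula to $\theta^{-2}|\hat y|^2$ (with a truncation to justify the computation, exploiting $\hat y(T)=0$), integrating by parts, and absorbing the resulting terms via Gronwall's inequality together with the control bounds already obtained. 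This last step, where the singular weight $\theta^{-2}$ near $t=T$ forces careful bookkeeping of the sign of the $t$-derivative terms and of the cross terms arising in integration by parts, is where I expect the main technical difficulty to lie; elsewhere the HUM construction and the observability provided by Theorem \ref{carle1} deliver the result cleanly.
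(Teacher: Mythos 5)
Your HUM construction is a genuinely different route from the paper's, and the first two thirds of it are sound: the duality identity, the observability inequality read off from Theorem \ref{carle1} with $\Xi=0$, the coercivity of $J$ on the completion under $\sqrt{Q(\cdot)}$, and the Euler--Lagrange identification of $(\hat h,\hat H)$ all work, and they do give $\hat y(T)=0$ together with the two \emph{control} bounds in (\ref{control1est}). The paper proceeds differently: it minimizes, over the controls themselves, a penalized functional $J_{\varepsilon}(h,H)$ that contains the weighted state norm $\frac12\EE\int_Q\theta_{\varepsilon}^{-2}|y|^2\dxt$ (with a regularized weight $\theta_\varepsilon$ that stays bounded at $t=T$) plus the penalization $\frac{1}{2\varepsilon}\EE\int_G|y(T)|^2\dx$, derives the optimality system, absorbs the adjoint source $-\theta_\varepsilon^{-2}y_\varepsilon$ via Theorem \ref{carle1}, and passes to the limit $\varepsilon\to0$ by Fatou. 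The reason for that choice is exactly the step you flag as "the main technical difficulty."

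That step is, in fact, a genuine gap rather than a technicality. The bound $\EE\int_Q\theta^{-2}|\hat y|^2\dxt\leq C(\cdots)$ cannot be recovered a posteriori by the weighted energy estimate you propose. Applying It\^o's formula to $\theta^{-2}|\hat y|^2$ produces the term $\EE\int_Q(\theta^{-2})_t|\hat y|^2\dxt$ with $(\theta^{-2})_t=-2\lambda\gamma_t\alpha\,\theta^{-2}$; on $[3T/4,T)$ one has $\gamma_t>0$ and $\alpha<0$, so this term is \emph{positive} on the right-hand side, of size $\lambda\gamma_t|\alpha|\,\theta^{-2}|\hat y|^2\sim(T-t)^{-m-1}\theta^{-2}|\hat y|^2$. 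Its coefficient is not integrable in time and cannot be absorbed by the dissipation $\theta^{-2}|\nabla\hat y|^2$ (Poincar\'e only yields a fixed constant), so Gronwall does not close; the truncation at $T-\delta$ also leaves the uncontrolled boundary term $\EE\int_G\theta^{-2}(T-\delta)|\hat y(T-\delta)|^2\dx$. More structurally, the state weight $\theta^{-2}$ in (\ref{control1est}) is \emph{stronger} than the control weights $\theta^{-2}\lambda^{-3}\mu^{-4}\xi^{-3}$ and $\theta^{-2}\lambda^{-2}\mu^{-2}\xi^{-3}$ by the unbounded factor $\lambda^{3}\mu^{4}\xi^{3}$, and an energy estimate can only lose weight relative to the data, never gain it; the control bounds you obtain therefore cannot be upgraded to the claimed state bound. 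Since the estimate $\EE\int_Q\theta^{-2}|\hat y|^2\dxt$ is precisely what the contraction argument in the proof of Theorem \ref{control} consumes, this omission is fatal to the overall scheme. The repair is to build the weighted state norm into the minimized functional (with the $\varepsilon$-regularized weight and the terminal penalization), so that the state estimate falls out of the optimality--duality identity $\EE\int_Q\theta_\varepsilon^{-2}|y_\varepsilon|^2\dxt+\cdots=\EE\int_G y_0\barr{r_\varepsilon}(0)\dx+\EE\int_Q F\barr{r_\varepsilon}\dxt$ rather than from an a posteriori energy estimate.
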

\begin{proof}[\bf Proof of Theorem \ref{control1}:]
We borrow some ideas from \cite{Liu2014Global}. The main steps are as follows. First, we construct a family of optimal control problems for equation (\ref{equationy3}). Next, we establish a uniform estimate for these optimal solutions. Finally, by taking the limit, one can obtain the estimate (\ref{control1est}) and the desired null controllability result.
We also divide the proof into three parts.

{\noindent\bf Step 1.}
For any $\varepsilon>0$, consider the following weight function
\begin{equation}\nonumber
\gamma_{\varepsilon}(t)=
	\left\{
		\begin{aligned}
	&1+\bigg(1-\frac{4t}{T}\bigg)^{\sigma}, &t\in &[0,T/4),\\
	&1,&t\in &[T/4,T/2+\varepsilon),\\
	&\gamma(t-\varepsilon ),& t\in &[T/2+\varepsilon,T],
	    \end{aligned}
    \right.
\end{equation}
where $\sigma$ is the same as in $(\ref{sigma})$.
Using the new weight function $\gamma_{\varepsilon}(t)$, we set
\begin{equation}\nonumber
	\varphi_{\varepsilon}(t,x):=\gamma_{\varepsilon}(t)\alpha(x),\quad
	\theta_{\varepsilon}:=e^{\lambda\varphi_{\varepsilon} }.
\end{equation}
With this notation, we introduce the functional
\begin{equation}\nonumber
\begin{split}
	J_{\varepsilon}(h,H)
	:=&\frac{1}{2}\EE \int_{Q}\theta_{\varepsilon}^{-2}|y|^2\dxt
	+\frac{1}{2}\EE \intt\int_{G_0}\theta^{-2}\lambda^{-3}\mu^{-4}\xi^{-3}|h|^2\dxt
	\\
	&+\frac{1}{2}\EE \int_{Q}\theta^{-2}\lambda^{-2}\mu^{-2}\xi^{-3}|H|^2\dxt
	+\frac{1}{2\varepsilon}\EE \int_{G}|y(T)|^2\dx
\end{split}	
\end{equation}
and consider the following optimal  control problem:
\begin{equation}\label{61}
\begin{split}
\left\{
\begin{aligned}
	&\min_{(h,H)\in \mathscr{H}}J_{\varepsilon}(h,H)\\
	&\text{subject to equation}\ (\ref{equationy3}),
\end{aligned}
\right.
\end{split}
\end{equation}
where
\begin{equation}\nonumber
\begin{split}
	\mathscr{H}=\bigg\{&(h,H)\in L^2_\mathbb{F}(0,T;L^2(G_0;\mathbb{C}))\times L^2_\mathbb{F}(0,T;L^2(G;\mathbb{C}))\bigg|\\
	&\EE \intt\int_{G_0}\theta^{-2}\lambda^{-3}\mu^{-4}\xi^{-3}|h|^2\dxt<+\infty,\quad
	\EE \int_{Q}\theta^{-2}\lambda^{-2}\mu^{-2}\xi^{-3}|H|^2\dxt<+\infty \bigg\}.
\end{split}	
\end{equation}
Similar to \cite{Li1995Optimal}, it is easy to check that for any $\varepsilon>0$, (\ref{61}) admits a unique optimal pair solution that we denote by $(h_{\varepsilon},H_{\varepsilon})$. Moreover, by the standard variational method (see \cite{Li1995Optimal,Lion1971Optimal}), it follows that
\begin{equation}\label{50}
	h_{\varepsilon}=-\chi _{G_0}\theta^{2}\lambda^{3}\mu^{4}\xi^{3}r_{\varepsilon},\quad
	H_{\varepsilon}=-\theta^{2}\lambda^{2}\mu^{2}\xi^{3}R_{\varepsilon}\quad \text{in}\ Q,\ \mathbb{P}\text{-}a.s.,
\end{equation}
where the pair $(r_{\varepsilon},R_{\varepsilon})$ verifies the backward stochastic equation
\begin{equation}\label{equationz1}
	\left\{
		\begin{aligned}
	&\dd r_{\varepsilon}+(a-\rmm{i} b)\sum_{j,k=1}^{n}(a^{jk}r_{\varepsilon j})_k\dt
	=-\theta_{\varepsilon}^{-2}y_{\varepsilon}\dt+R_{\varepsilon}\dd B(t) &\textup{in}\ &Q,\\
    &r_{\varepsilon}=0 &\textup{on}\ &\Sigma,\\
    &r_{\varepsilon}(T)=\frac{1}{\varepsilon}y_{\varepsilon}(T) &\textup{in}\ &G,
            \end{aligned}
    \right.
\end{equation}
where $y_{\varepsilon}$ is the solution of
(\ref{equationy3}) with the controls $h=h_{\varepsilon}$ and $H=H_{\varepsilon}$.

{\noindent\bf Step 2.} We now establish a uniform estimate for the optimal solutions $\{(y_{\varepsilon},h_{\varepsilon},H_{\varepsilon})\}_{\varepsilon>0}$.
By It\^o's formula, (\ref{equationy3}) and (\ref{equationz1}), it follows that
\begin{equation}\nonumber
\begin{split}
	&\EE\int_{G}y_{\varepsilon}(T)\barr{r_{\varepsilon}}(T)\dx \\
	=& \EE\int_{G}y_{\varepsilon}(0)\barr{r_{\varepsilon}}(0)\dx
	+\EE\int_{Q}\bigg[(a+\rmm{i} b)\sum_{j,k=1}^{n}(a^{jk}y_{\varepsilon j})_k+(F+\chi_{G_0}h_{\varepsilon})\bigg]\barr{r_{\varepsilon}}\dxt \\
	&+\EE\int_{Q}\bigg[-(a+\rmm{i} b)\sum_{j,k=1}^{n}(a^{jk}\barr{r_{\varepsilon }}_{j})_k
	-\theta_{\varepsilon}^{-2}\barr{y_{\varepsilon}}\bigg]y_{\varepsilon}\dxt
	+\EE\int_{Q}H_{\varepsilon}\barr{R_{\varepsilon}}\dxt .
\end{split}
\end{equation}
This, together with (\ref{50}) and the last equality of (\ref{equationz1}), imply that
\begin{equation}\label{51}
\begin{split}
&\EE\intt\int_{G_0}\theta^{2}\lambda^{3}\mu^{4}\xi^{3}|r_{\varepsilon}|^2\dxt
+\EE\int_{Q}\theta^{2}\lambda^{2}\mu^{2}\xi^{3}|R_{\varepsilon}|^2\dxt\\
    &+\EE\int_{Q}\theta_{\varepsilon}^{-2}|y_{\varepsilon}|^2\dxt
	+\frac{1}{\varepsilon}\EE\int_{G}|y_{\varepsilon}(T)|^2\dx \\
	=& \EE\int_{G}y_{0}\barr{r_{\varepsilon}}(0)\dx
	+\EE\int_{Q}F\barr{r_{\varepsilon}}\dxt  .
\end{split}
\end{equation}
Now, we will use the Carleman estimate in Theorem \ref{carle1}. We will apply it to equation (\ref{equationz1}) with $\Xi=-\theta_{\varepsilon}^{-2}y_{\varepsilon}$, $r=r_{\varepsilon}$ and $R=R_{\varepsilon}$. Hence, for any $\lambda\geq\lambda_{0}$ and $\mu\geq\mu_{0}$, we have
\begin{equation}\label{52}
\begin{split}
	&\EE \int_{G}\lambda^2\mu^3e^{2\mu(6m+1)}\theta^2(0)|r_{\varepsilon}(0)|^2\dx
	+\EE \int_{Q}\lambda^3\mu^4\xi^3\theta^2|r_{\varepsilon}|^2\dxt \\
	\leq &
	C\bigg(\EE \int_{0}^{T}\int_{G_0} \lambda^3\mu^4\xi^3\theta^2|r_{\varepsilon}|^2\dxt
	+\EE \int_{Q}\theta^2|\theta_{\varepsilon}^{-2}y_{\varepsilon}|^2\dxt\\
	&+\EE \int_{Q}\lambda^2\mu^2\xi^3\theta^2|R_{\varepsilon}|^2\dxt
	\bigg),
\end{split}
\end{equation}	
In view of (\ref{52}), we use the Young inequality on the right-hand side of (\ref{51}) to obtain
\begin{equation}\label{53}
\begin{split}
&\EE\intt\int_{G_0}\theta^{2}\lambda^{3}\mu^{4}\xi^{3}|r_{\varepsilon}|^2\dxt
+\EE\int_{Q}\theta^{2}\lambda^{2}\mu^{2}\xi^{3}|R_{\varepsilon}|^2\dxt\\
    &+\EE\int_{Q}\theta_{\varepsilon}^{-2}|y_{\varepsilon}|^2\dxt
	+\frac{1}{\varepsilon}\EE\int_{G}|y_{\varepsilon}(T)|^2\dx \\
	\leq &\rho\bigg[\EE \int_{G}\lambda^2\mu^3e^{2\mu(6m+1)}\theta^2(0)|r_{\varepsilon}(0)|^2\dx
	+\EE \int_{Q}\lambda^3\mu^4\xi^3\theta^2|r_{\varepsilon}|^2\dxt \bigg]\\
	&+C_{\rho}\bigg[ \EE \int_{G}\lambda^{-2}\mu^{-3}e^{-2\mu(6m+1)}\theta^{-2}(0)|y_{0}|^2\dx
	+\EE \int_{Q}\lambda^{-3}\mu^{-4}\xi^{-3}\theta^{-2}|F|^2\dxt \bigg].
\end{split}
\end{equation}
for any $\rho>0$. Noting that $\theta^2\theta_{\varepsilon}^{-2}\leq 1$ for any $(t,x)\in Q$ and using (\ref{52}) and (\ref{53}) with sufficiently small $\rho>0$, we obtain that
\begin{equation}\nonumber
\begin{split}
&\EE\intt\int_{G_0}\theta^{2}\lambda^{3}\mu^{4}\xi^{3}|r_{\varepsilon}|^2\dxt
+\EE\int_{Q}\theta^{2}\lambda^{2}\mu^{2}\xi^{3}|R_{\varepsilon}|^2\dxt\\
    &+\EE\int_{Q}\theta_{\varepsilon}^{-2}|y_{\varepsilon}|^2\dxt
	+\frac{1}{\varepsilon}\EE\int_{G}|y_{\varepsilon}(T)|^2\dx \\
	\leq &C\bigg[ \EE \int_{G}\lambda^{-2}\mu^{-3}e^{-2\mu(6m+1)}\theta^{-2}(0)|y_{0}|^2\dx
	+\EE \int_{Q}\lambda^{-3}\mu^{-4}\xi^{-3}\theta^{-2}|F|^2\dxt \bigg].
\end{split}
\end{equation}
Noting that (\ref{50}), we get
\begin{equation}\label{54}
	\begin{split}
&\EE\intt\int_{G_0}\theta^{-2}\lambda^{-3}\mu^{-4}\xi^{-3}|h_{\varepsilon}|^2\dxt
+\EE\int_{Q}\theta^{-2}\lambda^{-2}\mu^{-2}\xi^{-3}|H_{\varepsilon}|^2\dxt\\
    &+\EE\int_{Q}\theta_{\varepsilon}^{-2}|y_{\varepsilon}|^2\dxt
	+\frac{1}{\varepsilon}\EE\int_{G}|y_{\varepsilon}(T)|^2\dx \\
	\leq &C\bigg[ \EE \int_{G}\lambda^{-2}\mu^{-3}e^{-2\mu(6m+1)}\theta^{-2}(0)|y_{0}|^2\dx
	+\EE \int_{Q}\lambda^{-3}\mu^{-4}\xi^{-3}\theta^{-2}|F|^2\dxt \bigg].
\end{split}
\end{equation}

{\noindent\bf Step 3.} By (\ref{54}),  it is easy to check that there exists $(\hat{h},\hat{H},\hat{y})$ such that
\begin{equation}\label{55}
	\left\{
		\begin{aligned}
	&h_{\varepsilon}\rightharpoonup \hat{h} &\textup{weakly in}\ &L^2(\Omega\times(0,T);L^2(G_0;\mathbb{C})),\\
    &H_{\varepsilon}\rightharpoonup \hat{H}  &\textup{weakly in}\ &L^2(\Omega\times(0,T);L^2(G;\mathbb{C})),\\
    &y_{\varepsilon}\rightharpoonup \hat{y}  &\textup{weakly in}\ &L^2(\Omega\times(0,T);L^2(G;\mathbb{C})).
            \end{aligned}
    \right.
\end{equation}
We claim that $\hat{y}$ is the solution to (\ref{equationy3}) associated to $(\hat{h},\hat{H})$. In fact, let us denote by $\tilde{y}\in \mathcal{H}_{T}$ the unique solution to (\ref{equationy3}) with controls $(\hat{h},\hat{H})$. Then, for any $\eta\in L^2_\mathbb{F}(0,T;H_0^1(G;\mathbb{C}))$, consider the following backward stochastic Ginzburg-Landau equation:
\begin{equation}\label{equationz2}
	\left\{
		\begin{aligned}
	&\dd u+(a-\rmm{i} b)\sum_{j,k=1}^{n}(a^{jk}u_{j})_k\dt
	=\eta\dt+U\dd B(t) &\textup{in}\ &Q,\\
    &u=0 &\textup{on}\ &\Sigma,\\
    &u(T)=0 &\textup{in}\ &G.
            \end{aligned}
    \right.
\end{equation}
By (\ref{equationy3}), (\ref{equationz2}) and It\^o's formula, we see that
\begin{equation}\label{56}
\begin{split}
	 -\EE\int_{G}y_0\barr{u}(0)\dx
	=&\EE\int_{Q}F\barr{u}\dxt
	+\EE\intt\int_{G_0}\hat{h}\barr{u}\dxt \\
	&+\EE\int_{Q}\barr\eta \tilde{y}\dxt
	+\EE\int_{Q}\hat{H}\barr{U}\dxt .
\end{split}
\end{equation}
and
\begin{equation}\nonumber
\begin{split}
	 -\EE\int_{G}y_0\barr{u}(0)\dx
	=&\EE\int_{Q}F\barr{u}\dxt
	+\EE\intt\int_{G_0}h_{\varepsilon}\barr{u}\dxt \\
	&+\EE\int_{Q}\barr\eta y_{\varepsilon}\dxt
	+\EE\int_{Q}H_{\varepsilon}\barr{U}\dxt ,
\end{split}
\end{equation}
which, together with (\ref{55}), implys that
\begin{equation}\label{57}
\begin{split}
	 -\EE\int_{G}y_0\barr{u}(0)\dx
	=&\EE\int_{Q}F\barr{u}\dxt
	+\EE\intt\int_{G_0}\hat{h}\barr{u}\dxt \\
	&+\EE\int_{Q}\barr\eta \hat{y}\dxt
	+\EE\int_{Q}\hat{H}\barr{U}\dxt.
\end{split}
\end{equation}
Therefore, by (\ref{56}) and (\ref{57}), we get
$\tilde{y}=\hat{y}$ in $Q$, $\mathbb{P}$-$a.s.$.
Moreover, by (\ref{54}), we get that $\hat{y}(T)=0$ in $G$, $\mathbb{P}$-$a.s.$.
Also, from the weak convergence (\ref{55}), Fatou's lemma and the uniform estimate (\ref{54}), we obtain (\ref{control1est}). Thus, the proof of Theorem \ref{control1} is complete.
\end{proof}
Based on Theorem \ref{control1}, let us prove Theorem \ref{control}.
\begin{proof}[\bf Proof of Theorem \ref{control}:]
According to Theorem \ref{control1}, for any given $F\in L^2_{\mathbb{F}}(0,T;L^2(G;\mathbb{C}))$, we know that there exists a control pair $(h,H)\in L^2_\mathbb{F}(0,T;L^2(G_0;\mathbb{C}))\times L^2_\mathbb{F}(0,T;L^2(G;\mathbb{C}))$, such that the corresponding solution $y\in \mathcal{H}_{T}$ to the controlled system (\ref{equationy3})  satisfies $y(T)=0$ in $G$, $\mathbb{P}$-a.s.
Hence, let us consider a nonlinearity $f$ satisfying assumptions $(H_{4})$, $(H_{5})$ and $(H_{6})$ and define the nonlinear map,
\begin{equation}\nonumber
	\mathscr{E}: F\in {\mathscr{S}_{\lambda,\mu}}\mapsto f(w,t,x,y)\in {\mathscr{S}_{\lambda,\mu}},
\end{equation}
where $y$ is the trajectory of (\ref{equationy3}) associated to the data $y_{0}$ and $F$.
In the following, to simplify the notation, we simply write $f(y)$.

Next, we will show that $\mathscr{E}$ is a contraction mapping from ${\mathscr{S}_{\lambda,\mu}}$ into ${\mathscr{S}_{\lambda,\mu}}$.
First, we check that the mapping $\mathscr{E}$ is well defined. In fact, for any $F\in {\mathscr{S}_{\lambda,\mu}}$, using $(H_{4})$-$(H_{6})$ and $(\ref{control1est})$, we have
\begin{equation}\nonumber
\begin{split}
\|\mathscr{E}F\|_{\mathscr{S}_{\lambda,\mu}}^2
=&\EE \int_{Q}\theta^{-2}\lambda^{-3}\mu^{-4}\xi^{-3}|f(y)|^2\dxt \\
\leq &\kappa^2 \EE \int_{Q}\theta^{-2}\lambda^{-3}\mu^{-4}\xi^{-3}|y|^2\dxt \\
\leq
&\kappa^2\lambda^{-3}\mu^{-4}
\EE\int_{Q}\theta^{-2}|y|^2\dxt
\\
\leq
&C\lambda^{-3}\mu^{-4}
\bigg(\EE \int_{G}\lambda^{-2}\mu^{-3}e^{-2\mu(6m+1)}\theta^{-2}(0)|y_{0}|^2\dx
+\|F\|^2_{\mathscr{S}_{\lambda,\mu}}\bigg)
\\
\leq &\lambda^{-3}\mu^{-4}\bigg(C_{1}\EE\|y_{0}\|^2_{L^2(G)}	+C\|F\|^2_{\mathscr{S}_{\lambda,\mu}}\bigg)
< \infty,
\end{split}	
\end{equation}
for any sufficiently large parameters $\lambda\geq\lambda_{0}$ and $\mu\geq\mu_{0}$, where $C_{1}>0$ depends on $G$,  $G_{0}$, $\lambda$ and $\mu$, and $C > 0$ only depends on $G$ and $G_{0}$. This proves that $\mathscr{E}$ is well defined.

Next, we check that the mapping $\mathscr{E}$ is strictly contractive.
Let $y_{1}$, $y_{2}$ be the solutions of the controlled system (\ref{equationy3}) with respect to the source terms $F_{1}$, $F_{2}\in {\mathscr{S}_{\lambda,\mu}}$, respectively.
Then applying (\ref{control1est}) in Theorem \ref{control1} for the equation associated to  $F=F_{1}-F_{2}$, $y(0)=y_{0}-y_{0}=0$, and using assumption $(H_{6})$, we have
\begin{equation}\nonumber
\begin{split}
	\|\mathscr{E}F_{1}-\mathscr{E}F_{2}\|_{\mathscr{S}_{\lambda,\mu}}
	=&\EE \int_{Q}\theta^{-2}\lambda^{-3}\mu^{-4}\xi^{-3}|f(y_{1})-f(y_{2})|^2\dxt \\
	\leq &\kappa^2 \EE \int_{Q}\theta^{-2}\lambda^{-3}\mu^{-4}\xi^{-3}|y_{1}-y_{2}|^2\dxt \\
	 \leq &C\kappa^2\lambda^{-3}\mu^{-4}\|F_{1}-F_{2}\|^2_{\mathscr{S}_{\lambda,\mu}},
\end{split}	
\end{equation}
for any sufficiently large parameters $\lambda\geq\lambda_{0}$ and $\mu\geq\mu_{0}$, where $C > 0$ only depends on $G$ and $G_{0}$. Thus, if necessary, we can increase the value of $\lambda$ and $\mu$ such that $C\kappa^2\lambda^{-3}\mu^{-4}<1$, which implies that the mapping $\mathscr{E}$ is strictly contractive.

Further, by the Banach fixed point theorem, it follows that $\mathscr{E}$ has a unique fixed point $\tilde F\in {\mathscr{S}_{\lambda,\mu}}$. Moreover, it holds that $\tilde F=f(\omega,t,x,y)$, where $y$ is the solution for the equation (\ref{equationy3}) associated to $F=\tilde F$, i.e. $y$ is the solution to equation (\ref{equationy2}).
Applying Theorem \ref{control1}, we know that there exists a control pair $(h,H)\in L^2_\mathbb{F}(0,T;L^2(G_0;\mathbb{C}))\times L^2_\mathbb{F}(0,T;L^2(G;\mathbb{C}))$, such that $y$ to the controlled system (\ref{equationy2})  satisfies $y(T)=0$ in $G$, $\mathbb{P}$-a.s.
According to Remark \ref{remark1},
the proof of Theorem \ref{control} is complete.
\end{proof}

\section{Controllability of a semilinear backward stochastic complex Ginzburg-Landau equation}
Similar to the proof of Theorem \ref{control},
the key to the proof of Theorem \ref{controlb} is to obtain the controllability results of the following linear backward systems with source terms.
However, unlike the previous section, we will not need to prove the Carleman estimate for the corresponding adjoint system (i.e., the forward equation).
Although this is possible, we can greatly simplify the problem by establishing a Carleman estimate for a random Ginzburg-Landau equation, which can be obtained by the deterministic Carleman estimate.

\subsection{A new Carleman estimate for a deterministic complex Ginzburg-Landau equation}
In this section, we derive a new Carleman estimate for the following deterministic Ginzburg-Landau equation with source term.
\begin{equation}\label{equationqb1}
	\left\{
		\begin{aligned}
	&q_{t}-(a+\rmm{i} b)\sum_{j,k=1}^{n}(a^{jk}q_j)_k=\varpi &\textup{in}\ &Q,\\
    &q=0 &\textup{on}\ &\Sigma,\\
    &q(0)=q_0 &\textup{in}\ &G.
            \end{aligned}
    \right.
\end{equation}
For this, we need to introduce some new auxiliary functions, which are the mirrored version of (\ref{gamma}) and (\ref{99}). In more detail, also set $0<T<1$, we define the function $\tilde\gamma(t)$ as
\begin{equation}\label{gammab}
	\left\{
		\begin{aligned}
	&{\tilde\gamma}(t)=\frac{1}{t^m}, &t\in &[0,T/4),\\
	&{\tilde\gamma}(t)\ \text{is decreasing on}\ [T/4,T/2),\\
	&{\tilde\gamma}(t)=1,&t\in &[T/2,3T/4),\\
	&{\tilde\gamma}(t)=1+\bigg(1-\frac{4(T-t)}{T}\bigg)^{\sigma},& t\in &[3T/4,T],\\
	&{\tilde\gamma}(t)\in C^2([0,T)).
	    \end{aligned}
    \right.
\end{equation}
where $m\geq 1$ and $\sigma\geq 2$ is defined in (\ref{sigma}). Observe that $\tilde\gamma(t)$ is the mirrored version of $\gamma(t)$ in (\ref{sigma}) with respect to $T/2$.
With this new function, we set
\begin{equation}\nonumber
	{\tilde\varphi}(t,x):={\tilde\gamma}(t)\alpha(x),\quad
    \tilde\xi(t,x):=\tilde\gamma(t)e^{\mu(6m+\beta(x))},
\end{equation}
where $\alpha$  is defined in (\ref{99}) and $\mu$ is also a positive parameter with $\mu\geq 1$.
We finally set the weight function
\begin{equation}\label{thetab}
	\tilde\theta :=e^{\tilde\ell},\ \text{where}\ \tilde\ell(t,x):=\lambda\tilde\varphi(t,x).
\end{equation}

We have the Carleman estimate for the deterministic Ginzburg-Landau equation (\ref{equationqb1}).
\begin{theorem}\label{carleb1}
Assume that $\varpi\in L^2(0,T;L^2(G;\mathbb{C}))$,  then there exist $\lambda_{0}>0$ and $\mu_{0}>0$ such that the unique solution $q$ to $(\ref{equationqb1})$ with respect to $q_{0}\in L^{2}(G)$ satisfies
    \begin{equation}\label{carestb1}
    \begin{split}
	&\int_{G}\lambda^2\mu^3e^{2\mu(6m+1)}\tilde\theta^2(T)|z(T)|^2\dx
	+\int_{Q}\lambda\mu^2\tilde\xi\tilde\theta^2|\nabla q|^2\dxt\\
	&+\int_{Q}\lambda^3\mu^4\tilde\xi^3\tilde\theta^2|q|^2\dxt \\
	\leq &
	C\bigg(\int_{0}^{T}\int_{G_0} \lambda^3\mu^4\tilde\xi^3\tilde\theta^2|q|^2\dxt
	+\int_{Q}\tilde\theta^2|\varpi|^2\dxt
	\bigg),
	\end{split}
	\end{equation}	
for all $\lambda\geq\lambda_{0}$ and $\mu\geq\mu_{0}$, where  $C>0$ only depends on $G$ and $G_0$.
\end{theorem}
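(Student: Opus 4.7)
The plan is to derive Theorem \ref{carleb1} from Theorem \ref{carle1} by a time-reversal plus complex-conjugation trick, exploiting the fact that the new weight $\tilde\gamma$ is by construction the mirror image of $\gamma$ about $t=T/2$. A direct check of (\ref{gamma}) and (\ref{gammab}) on each of the four subintervals shows that $\tilde\gamma(t)=\gamma(T-t)$, and therefore $\tilde\varphi(t,x)=\varphi(T-t,x)$, $\tilde\xi(t,x)=\xi(T-t,x)$, and $\tilde\theta(t,x)=\theta(T-t,x)$.

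First, I would set $\tilde q(t,x):=\barr{q(T-t,x)}$ and $\check a^{jk}(t,x):=a^{jk}(T-t,x)$. Taking complex conjugates of (\ref{equationqb1}), evaluating at $T-t$, and using that the $a^{jk}$ are real-valued together with $\barr{a+\rmm{i}b}=a-\rmm{i}b$, one checks that $\tilde q$ satisfies the backward equation
\begin{equation*}
\tilde q_t+(a-\rmm{i}b)\sum_{j,k=1}^n(\check a^{jk}\tilde q_j)_k=-\barr{\varpi(T-t,\cdot)},\quad \tilde q|_\Sigma=0,\quad \tilde q(T)=\barr{q_0}.
\end{equation*}
Since $\tilde q$ is deterministic, it is trivially $\mathbf F$-adapted and carries no It\^o term, so this fits the framework of (\ref{equationz}) with $Z\equiv 0$ and $\Xi(t,x)=-\barr{\varpi(T-t,x)}$. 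The coefficients $\check a^{jk}$ inherit $(H_1)$--$(H_3)$ from $a^{jk}$, since those assumptions are pointwise in $t$ and are preserved under $t\mapsto T-t$.

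Next, I would apply Theorem \ref{carle1} to the pair $(\tilde q,0)$. The expectation drops out, and the $|Z|^2$-term on the right vanishes identically, leaving
\begin{equation*}
\begin{split}
&\int_G\lambda^2\mu^3e^{2\mu(6m+1)}\theta^2(0,x)|\tilde q(0,x)|^2\dx+\int_Q\lambda\mu^2\xi\theta^2|\nabla\tilde q|^2\dxt+\int_Q\lambda^3\mu^4\xi^3\theta^2|\tilde q|^2\dxt\\
&\leq C\Big(\int_0^T\!\!\int_{G_0}\lambda^3\mu^4\xi^3\theta^2|\tilde q|^2\dxt+\int_Q\theta^2|\varpi(T-t,x)|^2\dxt\Big).
\end{split}
\end{equation*}
Finally, I would make the substitution $s=T-t$ in every time integral. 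Using $|\tilde q(t,x)|^2=|q(T-t,x)|^2$, $|\nabla\tilde q(t,x)|^2=|\nabla q(T-t,x)|^2$, and $\tilde q(0,\cdot)=\barr{q(T,\cdot)}$, together with the identities $\theta(t,x)=\tilde\theta(T-t,x)$ and $\xi(t,x)=\tilde\xi(T-t,x)$, each term transforms into the corresponding integral in $(s,x)$ with the tilded weights in place of $\theta$ and $\xi$. Renaming $s$ back to $t$ yields (\ref{carestb1}) verbatim.

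The proof is essentially a bookkeeping reduction, so there is no substantial analytic obstacle; all the hard work is carried out once and for all in Theorem \ref{carle1}. The only points that deserve care are the verification of the piecewise identity $\tilde\gamma=\gamma\circ(T-\cdot)$ on each of the four subintervals defining $\gamma$, and tracking the complex conjugation so that the factor $(a+\rmm{i}b)$ in (\ref{equationqb1}) correctly flips to $(a-\rmm{i}b)$ in the backward form required by (\ref{equationz}).
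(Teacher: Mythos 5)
Your reduction is correct in substance, and it takes a genuinely different route from the paper: the paper does not prove Theorem \ref{carleb1} at all, but asserts that one can repeat the entire weighted-identity computation of Theorem \ref{carle1} with the mirrored weight $\tilde\theta$, whereas you obtain the estimate as a corollary of Theorem \ref{carle1} by the substitution $\tilde q(t,x)=\barr{q(T-t,x)}$. The key observations you need all check out: $\tilde\gamma(t)=\gamma(T-t)$ holds on each of the four subintervals (choosing the unspecified monotone transition pieces consistently), hence $\tilde\theta(t,x)=\theta(T-t,x)$ and $\tilde\xi(t,x)=\xi(T-t,x)$; conjugation is genuinely needed (time reversal alone turns $\partial_t-(a+\rmm{i}b)\Delta_A$ into $\partial_t+(a+\rmm{i}b)\Delta_A$, with the wrong sign of $b$ for (\ref{equationz})); and with $Z\equiv 0$ the $|Z|^2$-term on the right of (\ref{carest1}) disappears, so the transformed inequality is exactly (\ref{carestb1}) with $|q(T)|^2$ in place of the paper's typo $|z(T)|^2$. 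Your approach buys a complete, short proof at the cost of no new computation; the paper's approach (redoing the identity) would be needed only if one wanted a forward estimate not expressible as a time reversal of the backward one.

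One point deserves more care than your phrase ``those assumptions are pointwise in $t$'' suggests: $(H_1)$ is not pointwise in $t$ — it requires $a^{jk}$ to be an $\mathbf F$-adapted process, and the time-reversed coefficients $\check a^{jk}(t,x)=a^{jk}(T-t,x)$ are in general \emph{not} adapted when the $a^{jk}$ are random. Since Theorem \ref{carleb1} carries no expectation and is used in Theorem \ref{carleb2} pathwise, the correct reading is that it is a deterministic statement for each frozen $\omega$; after freezing $\omega$, the coefficients $a^{jk}(\omega,\cdot,\cdot)$, their time reversals, the data $\barr{q_0}$ and the source $-\barr{\varpi(T-\cdot,\cdot)}$ are all deterministic, hence trivially adapted as constant-in-$\omega$ processes, and $(\tilde q,0)$ is by uniqueness the solution of the backward stochastic equation (\ref{equationz}) with these data. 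With that reformulation your application of Theorem \ref{carle1} is legitimate (one should also note that the thresholds $\lambda_0,\mu_0$ must then be taken uniform over the relevant coefficient bounds, an issue the paper's own pathwise derivation of Theorem \ref{carleb2} from Theorem \ref{carleb1} shares). This is a repairable imprecision, not a gap in the strategy.
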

By using an argument similar to Theorem \ref{carle1}, one can establish the Carleman estimate (\ref{carestb1}) in Theorem \ref{carleb1}. We omit the proof here.

Let us consider the forward complex Ginzburg-Landau equation equation given by
\begin{equation}\label{equationqb2}
	\left\{
		\begin{aligned}
	&\dd q-(a+\rmm{i} b)\sum_{j,k=1}^{n}(a^{jk}q_j)_k\dt=\varpi_1\dt+\varpi_2\dd B(t) &\textup{in}\ &Q,\\
    &q=0 &\textup{on}\ &\Sigma,\\
    &q(0)=q_0 &\textup{in}\ &G.
            \end{aligned}
    \right.
\end{equation}
where $\varpi_{j}\in L^2(0,T;L^2(G;\mathbb{C})) $, $j=1,2$, and $q_{0}\in L^2_{\mathcal{F}_{0}}(\Omega;L^2(G;\mathbb{C}))$.
Notice that when $\varpi_{2}=0$, equation (\ref{equationqb2}) becomes a random Ginzburg-Landau equation. Therefore, using Theorem \ref{carleb1}, we conclude the following global Carleman estimate for random Ginzburg-Landau equations.
\begin{theorem}\label{carleb2}
Assume that $\varpi_{2}\equiv 0$ and $\varpi_{1}\in L^2_{\mathbb{F}}(0,T;L^2(G;\mathbb{C}))$,  then there exist $\lambda_{0}>0$ and $\mu_{0}>0$ such that the unique solution $q\in \mathcal{H}_{T}$ to $(\ref{equationqb2})$ with respect to $q_{0}\in L^{2}_{\mathcal{F}_{0}}(\Omega;L^2(G;\mathbb{C}))$ satisfies
    \begin{equation}\label{carestb2}
    \begin{split}
	&\EE\int_{G}\lambda^2\mu^3e^{2\mu(6m+1)}\tilde\theta^2(T)|q(T)|^2\dx
	+\EE\int_{Q}\lambda\mu^2\tilde\xi\tilde\theta^2|\nabla q|^2\dxt
	\\
	&+\EE\int_{Q}\lambda^3\mu^4\tilde\xi^3\tilde\theta^2|q|^2\dxt \\
	\leq &
	C\bigg(\EE\int_{0}^{T}\int_{G_0} \lambda^3\mu^4\tilde\xi^3\tilde\theta^2|q|^2\dxt
	+\EE\int_{Q}\tilde\theta^2|\varpi|^2\dxt
	\bigg),
	\end{split}
	\end{equation}	
for all $\lambda\geq\lambda_{0}$ and $\mu\geq\mu_{0}$, where  $C>0$ only depends on $G$ and $G_0$.
\end{theorem}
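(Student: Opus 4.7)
The key observation is that when $\varpi_2\equiv 0$, equation (\ref{equationqb2}) contains no It\^o integral: the stochastic differential $\dd q$ reduces to $q_t\dt$, so for $\mathbb{P}$-a.e.\ $\omega\in\Omega$ the sample path $q(\omega,\cdot,\cdot)$ solves, in the pathwise sense, a deterministic complex Ginzburg-Landau equation of the form (\ref{equationqb1}) with coefficients $a^{jk}(\omega,\cdot,\cdot)$, source $\varpi(\omega,\cdot,\cdot)=\varpi_1(\omega,\cdot,\cdot)$ and initial datum $q_0(\omega,\cdot)$. Hence the plan is simply to invoke Theorem \ref{carleb1} pathwise and then integrate in $\omega$.

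More concretely, the first step is to fix a $\mathbb{P}$-full set $\Omega_0\subset\Omega$ on which assumptions $(H_1)$--$(H_3)$ hold with the same constants as in their statement (this is possible since $(H_2)$ and $(H_3)$ are already phrased to hold almost surely with deterministic bounds $s_0,\rho$). Then for every $\omega\in\Omega_0$ the deterministic equation
\begin{equation}\nonumber
q(\omega)_t-(a+\rmm{i}b)\sum_{j,k=1}^n(a^{jk}(\omega)q(\omega)_j)_k=\varpi_1(\omega),\qquad q(\omega)|_\Sigma=0,\qquad q(\omega)(0)=q_0(\omega)
\end{equation}
satisfies the hypotheses of Theorem \ref{carleb1}, so the estimate (\ref{carestb1}) yields, with a constant $C>0$ depending only on $G$ and $G_0$ (and not on $\omega$),
\begin{equation}\nonumber
\begin{split}
&\int_{G}\lambda^2\mu^3e^{2\mu(6m+1)}\tilde\theta^2(T)|q(\omega)(T)|^2\dx
+\int_{Q}\lambda\mu^2\tilde\xi\tilde\theta^2|\nabla q(\omega)|^2\dxt\\
&\qquad+\int_{Q}\lambda^3\mu^4\tilde\xi^3\tilde\theta^2|q(\omega)|^2\dxt\\
&\leq C\bigg(\int_{0}^{T}\int_{G_0}\lambda^3\mu^4\tilde\xi^3\tilde\theta^2|q(\omega)|^2\dxt+\int_{Q}\tilde\theta^2|\varpi_1(\omega)|^2\dxt\bigg)
\end{split}
\end{equation}
for all $\lambda\geq\lambda_0$, $\mu\geq\mu_0$. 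The second step is to take expectation on both sides. Since $\tilde\theta$ and $\tilde\xi$ are deterministic functions of $(t,x)$, the Fubini-Tonelli theorem applies to every term and swaps $\mathbb{E}$ with the $(t,x)$ integrals, producing exactly (\ref{carestb2}).

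There is no real analytic obstacle here: the only thing to be careful about is the pathwise reduction itself, namely verifying that the unique solution $q\in\mathcal{H}_T$ of (\ref{equationqb2}) (with $\varpi_2=0$) provided by well-posedness theory does coincide, for $\mathbb{P}$-a.e.\ $\omega$, with the unique solution of the corresponding random (in fact, $\omega$-by-$\omega$ deterministic) parabolic Cauchy problem with the required regularity to justify Theorem \ref{carleb1}. This follows from standard measurable selection / uniqueness arguments for linear parabolic equations with $\mathbf{F}$-adapted coefficients, using that $q$ is $\mathbf{F}$-adapted and $H^2$-regular almost everywhere. Once this identification is in hand, the estimate is, up to taking $\mathbb{E}$, a direct corollary of Theorem \ref{carleb1}, so no additional stochastic calculus (and in particular no new Carleman derivation like the one of Theorem \ref{carle1}) is needed, exactly as anticipated in the paragraph preceding the statement.
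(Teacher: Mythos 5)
Your proposal is correct and is exactly the paper's argument: the paper likewise observes that with $\varpi_{2}\equiv 0$ the equation is a pathwise (random) deterministic Ginzburg--Landau equation, applies Theorem \ref{carleb1} sample-path by sample-path with a constant independent of $\omega$, and takes expectation. Your additional remarks on the pathwise identification and Fubini--Tonelli only make explicit what the paper leaves implicit.
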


\subsection{The proof of Theorem \ref{controlb}}
In this section, we will give the proof of Theorem \ref{controlb}. Inspired by the duality technique in \cite{Liu2014Global}, by applying Theorem \ref{carleb2}, we first establish a controllability result for the following linear backward stochastic complex Ginzburg-Landau equation.
\begin{equation}\label{equationyb3}
\left\{
		\begin{aligned}
	&\dd y+(a-\rmm{i} b)\sum_{j,k=1}^n (a^{jk}y_j)_k\dt =(F+\chi_{G_0}h)\dt
	+Y\dd B(t) &\textup{in}\ &Q,\\
    &y=0 &\textup{on}\ &\Sigma,\\
    &y(T)=y_T &\textup{in}\ &G,
            \end{aligned}
    \right.
\end{equation}
where $y_{T}\in L^2_{\mathcal{F}_{T}}(\Omega; L^2(G;\mathbb{C}))$ and $F\in L^2_{\mathbb{F}}(0,T;L^2(G;\mathbb{C}))$ are given and $h\in L^2_{\mathbb{F}}(0,T;L^2(G_{0};\\\mathbb{C})$ is a control.
Observe that given the aforementioned regularity on the controls and source term, one can easily show that system (\ref{equationyb3}) admits a unique solution $(y,Y)\in \mathcal{H}_{T}\times L^2_{\mathbb{F}}(0,T;L^2(G;\mathbb{C}))$ (see e.g., \cite[Theorem 4.11]{Lu2021Mathematical}).

We define the space
\begin{equation}\nonumber
\begin{split}
	\mathscr{Q}_{\lambda,\mu}=\bigg\{ F\in L^2_\mathbb{F}(0,T;L^2(G;\mathbb{C})) \bigg|
	\bigg(\EE \int_{Q}\tilde\theta^{-2}\lambda^{-3}\mu^{-4}\tilde\xi^{-3}|F|^2\dxt\bigg)^{\frac{1}{2}}<+\infty \bigg\},
\end{split}	
\end{equation}
which is a Banach space equipped with the canonical norm denoted by $\|\cdot\|_{\mathscr{Q}_{\lambda,\mu}}$.
\begin{theorem}\label{controlb1}
Assume that $F\in L^2_{\mathbb{F}}(0,T;L^2(G;\mathbb{C}))$. For any $y_T\in L^2_{\mathcal{F}_{T}}(\Omega;L^2(G;\mathbb{C}))$, there is a control $\hat{h}$ such that the associated solution $(\hat{y},\hat{Y})\in \mathcal{H}_{T}\times L^2_{\mathbb{F}}(0,T;L^2(G;\mathbb{C}))$ to the controlled  system $(\ref{equationyb3})$ satisfies $\hat y(0)=0$ in $G$, $\mathbb{P}$-a.s. Moreover, one can find two positive constants $\lambda_{0}$ and $\mu_{0}$, such that
\begin{equation}\label{controlb1est}
\begin{split}
&\EE\int_{Q}\tilde\theta^{-2}\lambda^{-2}\mu^{-2}\tilde\xi^{-2}|\hat Y|^2\dxt
+\EE\int_{Q}\tilde\theta^{-2}\lambda^{-2}\mu^{-2}\tilde\xi^{-2}|\nabla \hat y|^2\dxt
\\
&+\EE\int_{Q}\tilde\theta^{-2}|\hat y|^2\dxt
+\EE\intt\int_{G_0}\tilde\theta^{-2}\lambda^{-3}\mu^{-4}\tilde\xi^{-3}|\hat h|^2\dxt
\\
\leq &C\bigg[ \EE\int_{G}\tilde\theta^{-2}(T)\lambda^{-2}\mu^{-2}|y_{T}|^2\dx
+\|F\|^2_{\mathscr{Q}_{\lambda,\mu}} \bigg].
\end{split}
\end{equation}
for all $\lambda\geq\lambda_{0}$ and $\mu\geq\mu_{0}$, where  $C>0$ only depends on $G$ and $G_0$.
\end{theorem}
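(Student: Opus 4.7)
The plan is to adapt the penalization-duality scheme that proved Theorem \ref{control1}, exploiting the crucial structural difference that for the backward problem the natural adjoint is a forward equation without stochastic integral; this is exactly why Theorem \ref{carleb2} (Carleman for the random, i.e.\ non-stochastic, Ginzburg-Landau operator) is the only Carleman estimate needed. Concretely, fix $\varepsilon>0$ and introduce a mirrored $\varepsilon$-regularization $\tilde\theta_\varepsilon$ of $\tilde\theta$, obtained by flattening $\tilde\gamma$ on a small interval to the right of $t=0$ in the same spirit as $\gamma_\varepsilon$ in the proof of Theorem \ref{control1}; one then has $\tilde\theta_\varepsilon\ge\tilde\theta$ with $\tilde\theta_\varepsilon\downarrow\tilde\theta$ as $\varepsilon\to 0^+$, and $\tilde\theta_\varepsilon^{-2}$ bounded on $\bar Q$. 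Consider the penalized optimal control problem of minimizing
\begin{equation}\nonumber
\begin{split}
J_\varepsilon(h):=&\tfrac{1}{2}\EE\int_Q\tilde\theta_\varepsilon^{-2}|y|^2\dxt
+\tfrac{1}{2}\EE\int_0^T\!\!\int_{G_0}\tilde\theta^{-2}\lambda^{-3}\mu^{-4}\tilde\xi^{-3}|h|^2\dxt\\
&+\tfrac{1}{2\varepsilon}\EE\int_G|y(0)|^2\dx
\end{split}
\end{equation}
over admissible $h$ in the natural weighted Hilbert space, subject to (\ref{equationyb3}); strict convexity, coercivity and continuity yield a unique minimizer $h_\varepsilon$ with state $(y_\varepsilon,Y_\varepsilon)$.

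The key structural observation is the Euler-Lagrange equation. A variation $\tilde h$ produces a linearized state $(\tilde y,\tilde Y)$ solving the backward problem with source $\chi_{G_0}\tilde h$, zero terminal datum and a free martingale component $\tilde Y$. For the first-order condition to be a condition on $\tilde h$ alone (i.e., for the $\tilde Y$-contribution to vanish in the It\^o duality), one is forced to take the adjoint $q_\varepsilon$ martingale-free, hence satisfying the forward \emph{random} Ginzburg-Landau equation
\begin{equation}\nonumber
\left\{
\begin{aligned}
&\dd q_\varepsilon-(a+\rmm{i}b)\sum_{j,k=1}^{n}(a^{jk}q_{\varepsilon j})_k\dt=\tilde\theta_\varepsilon^{-2}y_\varepsilon\dt &\textup{in}\ &Q,\\
&q_\varepsilon=0 &\textup{on}\ &\Sigma,\\
&q_\varepsilon(0)=\tfrac{1}{\varepsilon}y_\varepsilon(0) &\textup{in}\ &G,
\end{aligned}
\right.
\end{equation}
to which Theorem \ref{carleb2} applies directly. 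Applying It\^o's formula to $\tilde y\,\overline{q_\varepsilon}$, integrating by parts in space, and taking real parts collapses the optimality condition to $h_\varepsilon=\chi_{G_0}\tilde\theta^2\lambda^3\mu^4\tilde\xi^3 q_\varepsilon$ in $Q$, $\mathbb P$-a.s.

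For the uniform-in-$\varepsilon$ estimate, apply It\^o to $y_\varepsilon\,\overline{q_\varepsilon}$ and use both equations to obtain the duality identity
\begin{equation}\nonumber
\begin{split}
&\EE\int_Q\tilde\theta_\varepsilon^{-2}|y_\varepsilon|^2\dxt
+\EE\int_0^T\!\!\int_{G_0}\tilde\theta^2\lambda^3\mu^4\tilde\xi^3|q_\varepsilon|^2\dxt
+\tfrac{1}{\varepsilon}\EE\int_G|y_\varepsilon(0)|^2\dx\\
&\qquad=\rmm{Re}\EE\int_G y_T\overline{q_\varepsilon(T)}\dx-\rmm{Re}\EE\int_Q F\overline{q_\varepsilon}\dxt.
\end{split}
\end{equation}
The right-hand side is controlled by Young's inequality against the terms $\EE\int_G\lambda^2\mu^2\tilde\theta^2(T)|q_\varepsilon(T)|^2\dx$ and $\EE\int_Q\lambda^3\mu^4\tilde\xi^3\tilde\theta^2|q_\varepsilon|^2\dxt$, both of which Theorem \ref{carleb2}, applied to $q_\varepsilon$ (whose source satisfies $\tilde\theta^2|\tilde\theta_\varepsilon^{-2}y_\varepsilon|^2\le\tilde\theta_\varepsilon^{-2}|y_\varepsilon|^2$ thanks to $\tilde\theta^2\tilde\theta_\varepsilon^{-2}\le 1$), absorbs modulo the local $G_0$ term; this leftover local term is itself absorbed by the second term on the left-hand side. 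The weighted norms of $\nabla y_\varepsilon$ and $Y_\varepsilon$ demanded by (\ref{controlb1est}) are then recovered by a standard weighted energy estimate obtained by applying It\^o's formula to $\tilde\theta^{-2}\lambda^{-2}\mu^{-2}\tilde\xi^{-2}|y_\varepsilon|^2$ and using the control on $y_\varepsilon$, $h_\varepsilon$, $y_T$ and $F$ already derived.

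Finally, mirroring Step 3 of the proof of Theorem \ref{control1}, weak compactness extracts a subsequential limit $(\hat h,\hat y,\hat Y)$; testing against an arbitrary auxiliary forward adjoint (the analogue of (\ref{equationz2})) and passing to the limit in the weak sense identifies $(\hat y,\hat Y)$ as the state of (\ref{equationyb3}) driven by $\hat h$, the penalty forces $\hat y(0)=0$ in $G$, $\mathbb P$-a.s., and Fatou's lemma combined with the monotonicity $\tilde\theta_\varepsilon^{-2}\uparrow\tilde\theta^{-2}$ upgrades the weighted bounds to (\ref{controlb1est}). The main obstacle I expect is the precise accounting of weight exponents in the Young absorption: one must split the two products on the right-hand side of the duality identity so that Theorem \ref{carleb2} produces exactly the weights $\lambda^{-2}\mu^{-2}\tilde\theta^{-2}(T)|y_T|^2$ and $\|F\|^2_{\mathscr Q_{\lambda,\mu}}$, with no stray power of $\lambda$ or $\mu$ that would compromise the forthcoming Banach fixed point argument for the semilinear backward problem.
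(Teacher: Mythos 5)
Your proposal follows essentially the same route as the paper's proof: the same penalized functional $J_{\varepsilon}$ with the mirrored regularized weight, the same martingale-free forward random adjoint system to which Theorem \ref{carleb2} applies, the same duality identity absorbed via Young's inequality and the Carleman estimate, the same weighted It\^o energy argument to recover the $\nabla y_{\varepsilon}$ and $Y_{\varepsilon}$ terms, and the same weak-limit/Fatou conclusion. The only detail to adjust is that the energy step should apply It\^o's formula to $\tilde\theta_{\varepsilon}^{-2}\lambda^{-2}\tilde\xi^{-2}|y_{\varepsilon}|^2$ rather than to the unregularized weight, since $\tilde\theta^{-2}\tilde\xi^{-2}$ blows up as $t\to 0^{+}$ whereas $\tilde\theta_{\varepsilon}^{-2}(0)\tilde\xi^{-2}(0)=0$ makes the boundary term at $t=0$ vanish, exactly as in the paper's computation.
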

\begin{proof}[\bf Proof of Theorem \ref{controlb1}]
We divide the proof into three parts.

{\noindent\bf Step 1.}
For any $\varepsilon>0$, consider the following weight function
\begin{equation}\label{70}
\tilde\gamma_{\varepsilon}(t)=
	\left\{
		\begin{aligned}
	&\gamma(t+\varepsilon), &t\in &[0,T/2-\varepsilon),\\
	&1,&t\in &[T/2-\varepsilon,3T/4),\\
	&1+\bigg(1-\frac{4(T-t)}{T}\bigg)^{\sigma},& t\in &[3T/4,T].
	    \end{aligned}
    \right.
\end{equation}
where $\sigma$ is the same as in $(\ref{sigma})$.
In this way, $\tilde\gamma_{\varepsilon}(t)\leq \gamma(t)$ for $t\in [0,T]$.
Using the new weight function $\tilde\gamma_{\varepsilon}(t)$, we set
\begin{equation}\label{71}
	\tilde\varphi_{\varepsilon}(t,x):=\tilde\gamma_{\varepsilon}(t)\alpha(x),\quad
	\tilde\theta_{\varepsilon}:=e^{\lambda\tilde\varphi_{\varepsilon} }.
\end{equation}
With this notation, we introduce the functional
\begin{equation}\nonumber
\begin{split}
	J_{\varepsilon}(h)
	:=&\frac{1}{2}\EE \int_{Q}\tilde\theta_{\varepsilon}^{-2}|y|^2\dxt
	+\frac{1}{2}\EE \intt\int_{G_0}\tilde\theta^{-2}\lambda^{-3}\mu^{-4}\tilde\xi^{-3}|h|^2\dxt
	\\
	&+\frac{1}{2\varepsilon}\EE \int_{G}|y(0)|^2\dx
\end{split}	
\end{equation}
and consider the following optimal  control problem:
\begin{equation}\label{62}
\begin{split}
\left\{
\begin{aligned}
	&\min_{h\in \mathscr{H}}J_{\varepsilon}(h)\\
	&\text{subject to equation}\ (\ref{equationyb3}),
\end{aligned}
\right.
\end{split}
\end{equation}
where
\begin{equation}\nonumber
\begin{split}
	\mathscr{H}=\bigg\{&h\in L^2_\mathbb{F}(0,T;L^2(G_0;\mathbb{C}))\bigg|
	\EE \intt\int_{G_0}\tilde\theta^{-2}\lambda^{-3}\mu^{-4}\tilde\xi^{-3}|h|^2\dxt
	<+\infty \bigg\}.
\end{split}	
\end{equation}
Similar to \cite{Li1995Optimal}, it is easy to check that for any $\varepsilon>0$, (\ref{62}) admits a unique optimal solution that we denote by $h_{\varepsilon}$. Moreover, by the standard variational method (see \cite{Li1995Optimal,Lion1971Optimal}), it follows that
\begin{equation}\label{63}
	h_{\varepsilon}=\chi _{G_0}\tilde\theta^{2}\lambda^{3}\mu^{4}\tilde\xi^{3}p_{\varepsilon}\quad \text{in}\ Q,\ \mathbb{P}\text{-}a.s.,
\end{equation}
where the $r_{\varepsilon}$ verifies the forward random equation
\begin{equation}\label{equationp}
	\left\{
		\begin{aligned}
	&\dd p_{\varepsilon}-(a+\rmm{i} b)\sum_{j,k=1}^{n}(a^{jk}p_{\varepsilon j})_k\dt
	=\tilde\theta_{\varepsilon}^{-2}y_{\varepsilon}\dt &\textup{in}\ &Q,\\
    &p_{\varepsilon}=0 &\textup{on}\ &\Sigma,\\
    &p_{\varepsilon}(0)=\frac{1}{\varepsilon}y_{\varepsilon}(0) &\textup{in}\ &G,
            \end{aligned}
    \right.
\end{equation}
where $y_{\varepsilon}$ is the solution of
(\ref{equationyb3}) with the controls $h=h_{\varepsilon}$.

{\noindent\bf Step 2.} We now establish a uniform estimate for the optimal solutions $\{(y_{\varepsilon},Y_{\varepsilon},h_{\varepsilon})\}_{\varepsilon>0}$.
By It\^o's formula, (\ref{equationyb3}) and (\ref{equationp}), it follows that
\begin{equation}\nonumber
\begin{split}
	&\EE\int_{G}y_{\varepsilon}(T)\barr{p_{\varepsilon}}(T)\dx \\
	=& \EE\int_{G}y_{\varepsilon}(0)\barr{p_{\varepsilon}}(0)\dx
	+\EE\int_{Q}\bigg[-(a-\rmm{i} b)\sum_{j,k=1}^{n}(a^{jk}y_{\varepsilon j})_k+(F+\chi_{G_0}h_{\varepsilon})\bigg]\barr{p_{\varepsilon}}\dxt \\
	&+\EE\int_{Q}\bigg[(a-\rmm{i} b)\sum_{j,k=1}^{n}(a^{jk}\barr{p_{\varepsilon }}_{j})_k
	+\tilde\theta_{\varepsilon}^{-2}\barr{y_{\varepsilon}}\bigg]y_{\varepsilon}\dxt.
\end{split}
\end{equation}
This, together with (\ref{63}) and the last equality of (\ref{equationp}), imply that
\begin{equation}\label{64}
\begin{split}
&\EE\intt\int_{G_0}\tilde\theta^{2}\lambda^{3}\mu^{4}\tilde\xi^{3}|p_{\varepsilon}|^2\dxt
+\EE\int_{Q}\tilde\theta_{\varepsilon}^{-2}|y_{\varepsilon}|^2\dxt
+\frac{1}{\varepsilon}\EE\int_{G}|y_{\varepsilon}(0)|^2\dx \\
=& \EE\int_{G}y_{T}\barr{p_{\varepsilon}}(T)\dx
-\EE\int_{Q}F\barr{p_{\varepsilon}}\dxt  .
\end{split}
\end{equation}
Now, we will use the Carleman estimate (\ref{carestb2}) in Theorem \ref{carleb2} to equation (\ref{equationp}) with $\varpi=\tilde\theta_{\varepsilon}^{-2}y_{\varepsilon}$ and $q=p_{\varepsilon}$. Hence, for any $\lambda\geq\lambda_{0}$ and $\mu\geq\mu_{0}$, we have
\begin{equation}\label{65}
\begin{split}
	&\EE\int_{G}\lambda^2\mu^3e^{2\mu(6m+1)}\tilde\theta^2(T)|p_{\varepsilon}(T)|^2\dx
	+\EE\int_{Q}\lambda\mu^2\tilde\xi\tilde\theta^2|\nabla p_{\varepsilon}|^2\dxt\\
	&+\EE\int_{Q}\lambda^3\mu^4\tilde\xi^3\tilde\theta^2|p_{\varepsilon}|^2\dxt \\
	\leq &
	C\bigg(\EE\int_{0}^{T}\int_{G_0} \lambda^3\mu^4\tilde\xi^3\tilde\theta^2|p_{\varepsilon}|^2\dxt
	+\EE\int_{Q}\tilde\theta^2|\tilde\theta_{\varepsilon}^{-2}y_{\varepsilon}|^2\dxt
	\bigg),
\end{split}
\end{equation}	
In view of (\ref{65}), we use the Young inequality on the right-hand side of (\ref{64}) to obtain
\begin{equation}\label{66}
\begin{split}
&\EE\intt\int_{G_0}\tilde\theta^{2}\lambda^{3}\mu^{4}\tilde\xi^{3}|p_{\varepsilon}|^2\dxt
+\EE\int_{Q}\tilde\theta_{\varepsilon}^{-2}|y_{\varepsilon}|^2\dxt
+\frac{1}{\varepsilon}\EE\int_{G}|y_{\varepsilon}(0)|^2\dx \\
\leq &\rho\bigg[\EE \int_{G}\lambda^2\mu^3e^{2\mu(6m+1)}\tilde\theta^2(T)|p_{\varepsilon}(T)|^2\dx
+\EE \int_{Q}\lambda^3\mu^4\tilde\xi^3\tilde\theta^2|p_{\varepsilon}|^2\dxt \bigg]\\
&+C_{\rho}\bigg[ \EE \int_{G}\lambda^{-2}\mu^{-3}e^{-2\mu(6m+1)}\tilde\theta^{-2}(T)|y_{T}|^2\dx
+\EE \int_{Q}\lambda^{-3}\mu^{-4}\tilde\xi^{-3}\tilde\theta^{-2}|F|^2\dxt \bigg].
\end{split}
\end{equation}
for any $\rho>0$. Noting that $\theta^2\theta_{\varepsilon}^{-2}\leq 1$ for any $(t,x)\in Q$ and using (\ref{65}) and (\ref{66}) with sufficiently small $\rho>0$, we obtain that
\begin{equation}\nonumber
\begin{split}
&\EE\intt\int_{G_0}\tilde\theta^{2}\lambda^{3}\mu^{4}\tilde\xi^{3}|p_{\varepsilon}|^2\dxt
+\EE\int_{Q}\tilde\theta_{\varepsilon}^{-2}|y_{\varepsilon}|^2\dxt
+\frac{1}{\varepsilon}\EE\int_{G}|y_{\varepsilon}(0)|^2\dx \\
\leq &C\bigg[ \EE \int_{G}\lambda^{-2}\mu^{-3}e^{-2\mu(6m+1)}\tilde\theta^{-2}(T)|y_{T}|^2\dx
+\EE \int_{Q}\lambda^{-3}\mu^{-4}\tilde\xi^{-3}\tilde\theta^{-2}|F|^2\dxt \bigg].
\end{split}
\end{equation}
Noting that (\ref{63}), we get
\begin{equation}\label{67}
	\begin{split}
&\EE\intt\int_{G_0}\tilde\theta^{-2}\lambda^{-3}\mu^{-4}\tilde\xi^{-3}|h_{\varepsilon}|^2\dxt
+\EE\int_{Q}\tilde\theta_{\varepsilon}^{-2}|y_{\varepsilon}|^2\dxt
+\frac{1}{\varepsilon}\EE\int_{G}|y_{\varepsilon}(0)|^2\dx \\
\leq &C\bigg[ \EE \int_{G}\lambda^{-2}\mu^{-3}e^{-2\mu(6m+1)}\tilde\theta^{-2}(T)|y_{T}|^2\dx
+\EE \int_{Q}\lambda^{-3}\mu^{-4}\tilde\xi^{-3}\tilde\theta^{-2}|F|^2\dxt \bigg].
\end{split}
\end{equation}

Now we will add a weighted integral of the process $Y$ on the left-hand side of (\ref{67}).
To do that, using It\^o's formula and equation (\ref{equationyb3}) with $h=h_{\varepsilon}$, we have
\begin{equation}\nonumber
\begin{split}
\dd(\tilde\theta_{\varepsilon}^{-2}\lambda^{-2}\tilde\xi^{-2}|y_{\varepsilon}|^2)
=&(\tilde\theta_{\varepsilon}^{-2}\lambda^{-2}\tilde\xi^{-2})_{t}|y_{\varepsilon}|^2\dt
+\tilde\theta_{\varepsilon}^{-2}\lambda^{-2}\tilde\xi^{-2}|Y_{\varepsilon}|^2\dt
\\
&+\tilde\theta_{\varepsilon}^{-2}\lambda^{-2}\tilde\xi^{-2}y_{\varepsilon}\dd \barr{y_{\varepsilon}}
+\tilde\theta_{\varepsilon}^{-2}\lambda^{-2}\tilde\xi^{-2}\barr{y_{\varepsilon}}\dd y_{\varepsilon}.
\end{split}
\end{equation}
Integrating the above equality in $Q$, taking mathematical expectation on both sides, we conclude that
\begin{equation}\label{68}
\begin{split}
&\EE\int_{G}\tilde\theta_{\varepsilon}^{-2}(T)\lambda^{-2}\tilde\xi^{-2}(T)|y_{T}|^2\dx
-\EE\int_{G}\tilde\theta_{\varepsilon}^{-2}(0)\lambda^{-2}\tilde\xi^{-2}(0)|y_{\varepsilon}(0)|^2\dx
\\
=&\EE\int_{Q}(\tilde\theta_{\varepsilon}^{-2}\lambda^{-2}\tilde\xi^{-2})_{t}|y_{\varepsilon}|^2\dxt
+\EE\int_{Q}\tilde\theta_{\varepsilon}^{-2}\lambda^{-2}\tilde\xi^{-2}|Y_{\varepsilon}|^2\dxt
\\
&+\EE\int_{Q}\tilde\theta_{\varepsilon}^{-2}\lambda^{-2}\tilde\xi^{-2}y_{\varepsilon}[-(a+\rmm{i} b)\sum_{j,k=1}^n (a^{jk}\barr{y_{\varepsilon}}_j)_k +(\barr{F}+\chi_{G_0}\barr{h_{\varepsilon}})]\dxt
\\
&+\EE\int_{Q}\tilde\theta_{\varepsilon}^{-2}\lambda^{-2}\tilde\xi^{-2}\barr{y_{\varepsilon}}[-(a-\rmm{i} b)\sum_{j,k=1}^n (a^{jk}{y_{\varepsilon}}_j)_k +(F+\chi_{G_0}h_{\varepsilon})]\dxt.
\end{split}
\end{equation}
Using $\tilde\xi^{-2}(0)=0$, the weight $\tilde\theta_{\varepsilon}^{-1}$ does not blow up at $t=0$, we obtain from (\ref{68}) that
\begin{equation}\label{69}
\begin{split}
&\EE\int_{Q}(\tilde\theta_{\varepsilon}^{-2}\lambda^{-2}\tilde\xi^{-2})_{t}|y_{\varepsilon}|^2\dxt
+\EE\int_{Q}\tilde\theta_{\varepsilon}^{-2}\lambda^{-2}\tilde\xi^{-2}|Y_{\varepsilon}|^2\dxt
\\
&+2a\EE\int_{Q}\tilde\theta_{\varepsilon}^{-2}\lambda^{-2}\tilde\xi^{-2}\sum_{j,k=1}^{n}a^{jk}y_{\varepsilon k}\barr{y_{\varepsilon}}_{j}\dxt
\\
= &\EE\int_{G}\tilde\theta_{\varepsilon}^{-2}(T)\lambda^{-2}\tilde\xi^{-2}(T)|y_{T}|^2\dx
\\
&-2\EE\int_{Q}\rmm{Re}\bigg[(a+\rmm{i}b)y_{\varepsilon}\sum_{j,k=1}^na^{jk}(\tilde\theta_{\varepsilon}^{-2}\lambda^{-2}\tilde\xi^{-2})_{k}\barr{y_{\varepsilon}}_{j}\bigg]\dxt
\\
&-2\EE\int_{Q}\tilde\theta_{\varepsilon}^{-2}\lambda^{-2}\tilde\xi^{-2}\rmm{Re}(y_{\varepsilon}\barr{F})\dxt
-2\EE\intt\int_{G_{0}}\tilde\theta_{\varepsilon}^{-2}\lambda^{-2}\tilde\xi^{-2}\rmm{Re}(y_{\varepsilon}\barr{h_{\varepsilon}})\dxt .
\end{split}	
\end{equation}
Let us analyze some terms in (\ref{69}). First, using $(H_{3})$, we have
\begin{equation}\label{72}
2a\EE\int_{Q}\tilde\theta_{\varepsilon}^{-2}\lambda^{-2}\tilde\xi^{-2}\sum_{j,k=1}^{n}a^{jk}y_{\varepsilon k}\barr{y_{\varepsilon}}_{j}\dxt
\geq
2as_{0}\EE\int_{Q}\tilde\theta_{\varepsilon}^{-2}\lambda^{-2}\tilde\xi^{-2}|\nabla y_{\varepsilon}|^2\dxt.
\end{equation}
Then, by the definition of $\tilde\gamma_{\varepsilon}$, $\tilde\theta_{\varepsilon}$ in (\ref{70}) and (\ref{71}), one can easily obtain that
\begin{equation}\label{73}
\begin{split}
\EE\int_{Q}(\tilde\theta_{\varepsilon}^{-2}\lambda^{-2}\tilde\xi^{-2})_{t}|y_{\varepsilon}|^2\dxt
\geq -C\EE \int_{0}^{3T/4}\int_{G}\tilde\theta_{\varepsilon}^{-2}\lambda^{-1}\mu |y_{\varepsilon}|^2\dxt .
\end{split}	
\end{equation}
Further, using Young inequality and Cauchy inequality, we estimate the last three terms of  (\ref{69}).
For the first one, we have
\begin{equation}\label{74}
\begin{split}
&\bigg|2\EE\int_{Q}\rmm{Re}\bigg[(a+\rmm{i}b)y_{\varepsilon}\sum_{j,k=1}^na^{jk}(\tilde\theta_{\varepsilon}^{-2}\lambda^{-2}\tilde\xi^{-2})_{k}\barr{y_{\varepsilon}}_{j}\bigg]\dxt \bigg|
\\
\leq &\rho \EE\int_{Q}\tilde\theta_{\varepsilon}^{-2}\lambda^{-2}\tilde\xi^{-2}|\nabla y_{\varepsilon}|^2\dxt
+C_{\rho}\EE\int_{Q}\tilde\theta_{\varepsilon}^{-2}\mu^2| y_{\varepsilon}|^2\dxt
\end{split}
\end{equation}
for any $\rho>0$, where we have used $|(\tilde\theta_{\varepsilon}^{-2}\lambda^{-2}\tilde\xi^{-2})_{k}|\leq C\tilde\theta_{\varepsilon}^{-2}\lambda^{-1}\mu\tilde\xi^{-1}$.
For the second one, we obtain
\begin{equation}\label{75}
\begin{split}
&\bigg|2\EE\int_{Q}\tilde\theta_{\varepsilon}^{-2}\lambda^{-2}\tilde\xi^{-2}\rmm{Re}(y_{\varepsilon}\barr{F})\dxt \bigg|
\\
\leq &\EE\int_{Q}\tilde\theta_{\varepsilon}^{-2}\lambda^{-1}\mu^{2}\tilde\xi^{-1}|y_{\varepsilon}|^2\dxt
+\EE\int_{Q}\tilde\theta^{-2}\lambda^{-3}\mu^{-2}\tilde\xi^{-3}|F|^2\dxt,
\end{split}	
\end{equation}
where we have used $\tilde\theta_{\varepsilon}^{-2}\leq \tilde\theta^{-2}$.
For the third one, we get
\begin{equation}\label{76}
\begin{split}
&\bigg|2\EE\intt\int_{G_{0}}\tilde\theta_{\varepsilon}^{-2}\lambda^{-2}\tilde\xi^{-2}\rmm{Re}(y_{\varepsilon}\barr{h_{\varepsilon}})\dxt\bigg|
\\
\leq &\EE\int_{Q}\tilde\theta_{\varepsilon}^{-2}\mu^{2}|y_{\varepsilon}|^2\dxt
+\EE\intt\int_{G_{0}}\tilde\theta_{\varepsilon}^{-2}\lambda^{-4}\mu^{-2}\tilde\xi^{-4}|h_{\varepsilon}|^2\dxt.
\end{split}	
\end{equation}
Combining (\ref{72})-(\ref{76}) with (\ref{69}) and taking $\rho>0$ small enough, we conclude that
\begin{equation}\label{77}
\begin{split}
&\EE\int_{Q}\tilde\theta_{\varepsilon}^{-2}\lambda^{-2}\mu^{-2}\tilde\xi^{-2}|Y_{\varepsilon}|^2\dxt
+\EE\int_{Q}\tilde\theta_{\varepsilon}^{-2}\lambda^{-2}\mu^{-2}\tilde\xi^{-2}|\nabla y_{\varepsilon}|^2\dxt
\\
\leq &C\EE\int_{G}\tilde\theta^{-2}(T)\lambda^{-2}\mu^{-2}|y_{T}|^2\dx
+C\EE\int_{Q}\tilde\theta_{\varepsilon}^{-2}| y_{\varepsilon}|^2\dxt
\\
&+C\EE\int_{Q}\tilde\theta^{-2}\lambda^{-3}\mu^{-4}\tilde\xi^{-3}|F|^2\dxt
+C\EE\intt\int_{G_{0}}\tilde\theta^{-2}\lambda^{-3}\mu^{-4}\tilde\xi^{-3}|h_{\varepsilon}|^2\dxt,
\end{split}	
\end{equation}
where we have used $\tilde\theta_{\varepsilon}^{-2}\leq \tilde\theta^{-2}$, $\tilde\xi^{-2}(T)<1$ and $\lambda^{-1}\tilde\xi^{-1}<1$.
Using (\ref{67}) and (\ref{77}), we have
\begin{equation}\label{78}
\begin{split}
&\EE\int_{Q}\tilde\theta_{\varepsilon}^{-2}\lambda^{-2}\mu^{-2}\tilde\xi^{-2}|Y_{\varepsilon}|^2\dxt
+\EE\int_{Q}\tilde\theta_{\varepsilon}^{-2}\lambda^{-2}\mu^{-2}\tilde\xi^{-2}|\nabla y_{\varepsilon}|^2\dxt
\\
&+\EE\int_{Q}\tilde\theta_{\varepsilon}^{-2}|y_{\varepsilon}|^2\dxt
+\EE\intt\int_{G_0}\tilde\theta^{-2}\lambda^{-3}\mu^{-4}\tilde\xi^{-3}|h_{\varepsilon}|^2\dxt
+\frac{1}{\varepsilon}\EE\int_{G}|y_{\varepsilon}(0)|^2\dx \\
\leq &C\bigg[ \EE\int_{G}\tilde\theta^{-2}(T)\lambda^{-2}\mu^{-2}|y_{T}|^2\dx
+\EE \int_{Q}\lambda^{-3}\mu^{-4}\tilde\xi^{-3}\tilde\theta^{-2}|F|^2\dxt \bigg].
\end{split}
\end{equation}

{\noindent\bf Step 3.} By (\ref{78}),  it is easy to check that there exists $(\hat{h},\hat{y},\hat{Y})$ such that
\begin{equation}\label{79}
	\left\{
		\begin{aligned}
	&h_{\varepsilon}\rightharpoonup \hat{h} &\textup{weakly in}\ &L^2(\Omega\times(0,T);L^2(G_0;\mathbb{C})),\\
    &y_{\varepsilon}\rightharpoonup \hat{y}  &\textup{weakly in}\ &L^2(\Omega\times(0,T);H_0^1(G;\mathbb{C})),\\
    &Y_{\varepsilon}\rightharpoonup \hat{Y}  &\textup{weakly in}\ &L^2(\Omega\times(0,T);L^2(G;\mathbb{C})).
            \end{aligned}
    \right.
\end{equation}
We can check that $(\hat y,\hat Y)$ is the solution to (\ref{equationyb3}) associated to $\hat h$ in exactly the same way as Theorem \ref{control1}.
Moreover, by (\ref{78}), we get that $\hat{y}(0)=0$ in $G$, $\mathbb{P}$-$a.s.$.
Also, from the weak convergence (\ref{79}), Fatou's lemma and the uniform estimate (\ref{78}), we obtain (\ref{controlb1est}). Thus, the proof of Theorem \ref{controlb1} is complete.
\end{proof}

\begin{proof}[\bf Proof of Theorem \ref{controlb}:]
According to Theorem \ref{controlb1}, for any given $F\in L^2_{\mathbb{F}}(0,T;L^2(G;\mathbb{C}))$, we know that there exists a control $h\in L^2_\mathbb{F}(0,T;L^2(G_0;\mathbb{C}))$, such that the corresponding solution $(y,Y)\in \mathcal{H}_{T}\times L^2_\mathbb{F}(0,T;L^2(G;\mathbb{C}))$ to the controlled system (\ref{equationyb3})  satisfies $y(0)=0$ in $G$, $\mathbb{P}$-a.s.
Hence, let us consider a nonlinearity $\Upsilon$ satisfying assumptions $(H_{8})$-$(H_{10})$, and define the nonlinear map,
\begin{equation}\nonumber
	\mathscr{A}: F\in {\mathscr{Q}_{\lambda,\mu}}\mapsto \Upsilon (w,t,x,y,Y)\in {\mathscr{Q}_{\lambda,\mu}},
\end{equation}
where $(y,Y)$ is the trajectory of (\ref{equationyb3}) associated to the data $y_{T}$ and $F$.
In the following, to simplify the notation, we simply write $\Upsilon (y,Y)$.

Next, we will show that $\mathscr{A}$ is a contraction mapping from ${\mathscr{Q}_{\lambda,\mu}}$ into ${\mathscr{Q}_{\lambda,\mu}}$.
First, we check that the mapping $\mathscr{A}$ is well defined. In fact, for any $F\in {\mathscr{Q}_{\lambda,\mu}}$, using $(H_{9})$, $(H_{10})$ and $(\ref{controlb1est})$, we have
\begin{equation}\nonumber
\begin{split}
\|\mathscr{A}F\|_{\mathscr{Q}_{\lambda,\mu}}^2
=&\EE \int_{Q}\theta^{-2}\lambda^{-3}\mu^{-4}\xi^{-3}|\Upsilon (y,Y)|^2\dxt \\
\leq &\kappa_{2}^2 \EE \int_{Q}\theta^{-2}\lambda^{-3}\mu^{-4}\xi^{-3}(|y|^2+|Y|^2)\dxt \\
\leq
&\kappa_{2}^2(\lambda^{-3}\mu^{-4}+\lambda^{-1}\mu^{-2})\bigg(
\EE\int_{Q}\theta^{-2}|y|^2\dxt
+\EE\int_{Q}\theta^{-2}\lambda^{-2}\mu^{-2}\xi^{-2}|Y|^2\dxt\bigg)
\\
\leq
&C\lambda^{-1}\mu^{-2}
\bigg(\EE \int_{G}\lambda^{-2}\mu^{-2}\theta ^{-2}(T)|y_{T}|^2\dx
+\|F\|^2_{\mathscr{Q}_{\lambda,\mu}}\bigg)
\\
\leq &\lambda^{-1}\mu^{-2}\bigg(C_{1}\EE\|y_{T}\|^2_{L^2(G)}	+C\|F\|^2_{\mathscr{Q}_{\lambda,\mu}}\bigg)
< \infty,
\end{split}	
\end{equation}
for any sufficiently large parameters $\lambda\geq\lambda_{0}$ and $\mu\geq\mu_{0}$, where $C_{1}>0$ depends on $G$,  $G_{0}$, $\lambda$ and $\mu$, and $C > 0$ only depends on $G$ and $G_{0}$.
This proves that $\mathscr{A}$ is well defined.

Next, we check that the mapping $\mathscr{A}$ is strictly contractive.
Let $(y_{1},Y_{1})$, $(y_{2},Y_{2})$ be the solutions of the controlled system (\ref{equationyb3}) with respect to the source terms $F_{1}$, $F_{2}\in {\mathscr{Q}_{\lambda,\mu}}$, respectively.
Then applying (\ref{controlb1est}) in Theorem \ref{controlb1} for the equation associated to  $F=F_{1}-F_{2}$, $y(T)=y_{T}-y_{T}=0$, and using assumption $(H_{10})$, we have
\begin{equation}\nonumber
\begin{split}
	\|\mathscr{A}F_{1}-\mathscr{A}F_{2}\|_{\mathscr{Q}_{\lambda,\mu}}
	=&\EE \int_{Q}\theta^{-2}\lambda^{-3}\mu^{-4}\xi^{-3}|\Upsilon (y_{1},Y_{1})- \Upsilon (y_{2},Y_{2})|^2\dxt \\
	\leq &2\kappa_{2}^2 \EE \int_{Q}\theta^{-2}\lambda^{-3}\mu^{-4}\xi^{-3}(|y_{1}-y_{2}|^2+|Y_{1}-Y_{2}|^2)\dxt \\
	 \leq &C\kappa_{2}^2\lambda^{-1}\mu^{-2}\|F_{1}-F_{2}\|^2_{\mathscr{Q}_{\lambda,\mu}},
\end{split}	
\end{equation}
for any sufficiently large parameters $\lambda\geq\lambda_{0}$ and $\mu\geq\mu_{0}$, where $C > 0$ only depends on $G$ and $G_{0}$. Thus, if necessary, we can increase the value of $\lambda$ and $\mu$ such that $C\kappa_{2}^2\lambda^{-1}\mu^{-2}<1$, which implies that the mapping $\mathscr{A}$ is strictly contractive.

Further, by the Banach fixed point theorem, it follows that $\mathscr{A}$ has a unique fixed point $\tilde F\in {\mathscr{Q}_{\lambda,\mu}}$. Moreover, it holds that $\tilde F=\Upsilon (\omega,t,x,y,Y)$, where $(y,Y)$ is the solution for the equation (\ref{equationyb3}) associated to $F=\tilde F$, which means that $(y,Y)$ is the solution to equation (\ref{equationyb1}).
Applying Theorem \ref{controlb1}, we know that there exists a control  $h\in L^2_\mathbb{F}(0,T;L^2(G_0;\mathbb{C}))$, such that $(y,Y)$ to the controlled system (\ref{equationyb1})  satisfies $y(0)=0$ in $G$, $\mathbb{P}$-a.s.
Hence, the proof of Theorem \ref{controlb} is complete.
\end{proof}


\end{document}